\documentclass{article}

\usepackage{graphicx} 
\usepackage{amsthm}
\usepackage{amsmath}
\usepackage{amssymb,  mathrsfs}
\usepackage{enumitem}
\usepackage{tikz}
\theoremstyle{plain}
\newtheorem{theorem}{Theorem}[section] 
\newtheorem{corollary}[theorem]{Corollary}
\newtheorem{lemma}[theorem]{Lemma}
\newtheorem{proposition}[theorem]{Proposition}
 
\newtheorem{remark}[theorem]{Remark} 
\newtheorem{example}[theorem]{Example} 
\newtheorem*{corollary*}{Corollary}

\usepackage{tikz-cd}

\theoremstyle{definition}
\newtheorem{definition}[theorem]{Definition}
\newtheorem{notation}{Notation}
\newtheorem{problem}{Problem}

\numberwithin{equation}{section}

\DeclareMathOperator{\Aut}{Aut} 
\DeclareMathOperator{\suc}{Suc}

\newcommand{\udsubseteq}{\mathrel{
\rotatebox[origin=c]{45}{$\subsetneq$}}
}
\newcommand{\ddsubseteq}{\mathrel{
\rotatebox[origin=c]{-45}{$\subsetneq$}}
}

\providecommand{\keywords}[1]
{
  \small	
  \textbf{\textit{Keywords:}} #1
}

\providecommand{\subjectclass}[1]
{
  \small	
  \textbf{\textit{Subject Classification 2020:}} #1
}

\begin{document}

\markboth{Thomas Quinn-Gregson}
{Monounary algebras: $\omega$-categoricity and   ultrahomogeneity}

\title{Ultrahomogeneity and $\omega$-categoricity of monounary algebras%
} 
 
\author{Thomas Quinn-Gregson
} 
\maketitle
 
\begin{abstract} Ultrahomogeneity and $\omega$-categoricity are two central concepts arising from model theory, with strong connections with oligomorphic permutation groups and quantifier elimination. 
In particular, both are conditions on the automorphism group of a structure. 
 
The aim of this paper is to describe both the $\omega$-categorical  monounary algebras and the ultrahomogeneous monounary algebras of arbitrary cardinalities. We show that a monounary algebra is $\omega$-categorical [ultrahomogeneous] if and only if every element has finite height and $\Aut(\mathcal{A})$ has only finitely many 1-orbits [$\mathcal{A}$ is 1-ultrahomogeneous]. 
Our classification of ultrahomogeneous monounary algebras is then viewed in the context of previously studied variants of ultrahomogeneity, including  (partial)-homogeneity and transitivity. 
\end{abstract}

\keywords{Monounary algebra, ultrahomogeneity, homogeneity, partial homogeneity, $\omega$-categoricity}

\subjectclass{08A60, 03C35, 03C10}
\maketitle


\section{Introduction}
A monounary algebra is a set together with a single unary function.
Their transparent structure has allowed for a complete description of those monounary algebras which satisfy certain  properties which arise in model theory. 
Examples include transitivity \cite{monohom}, homomorphism-homogeneity \cite{monohomhom},    and polymorphism-homogeneity \cite{polyhom}; these are conditions on  the automorphism  group, the endomorphism monoid, and the polymorphism clone of a monounary algebra, respectively. 
Such a complete picture   is rare amongst structures, and indeed no such classifications exist even for classical structures such as  graphs, groups, or partially ordered sets. 
Spurred by this, we consider two of the most widely studied model theoretic properties of this form; ultrahomogeneity and $\omega$-categoricity. 

A (first order) structure $\mathcal{M}$ is called $n$-\textit{ultrahomogeneous} if every isomorphism between $n$-generated substructures of $\mathcal{M}$ extends to an automorphism of $\mathcal{M}$, and is \textit{ultrahomogeneous}\footnote{This is also known as being \textit{homogenous}.} if it is $n$-ultrahomogeneous for each $n\in \mathbb{N}$. 
For several key classes of relational structures, ultrahomogeneity proves to be a strong enough condition to allow for classifications, albeit often only in the countable case. These include  graphs \cite{Lachlan80}, digraphs \cite{CherlinDigraph}, and  partially ordered sets  \cite{Schmerl79}. 
 For  algebras the problem appears far more difficult in general, and indeed Saracino and Wood \cite{SaracinoWood} showed the existence of $2^\omega$ ultrahomogeneous countable groups. 
  Despite this, ultrahomogeneous abelian groups (of arbitrary cardinality) have been described in \cite{Cherlin91}, and ultrahomogeneous finite groups in \cite{Cherlin2000}. Progress has also been made  for rings and non-abelian groups  (see, e.g., \cite{Cherlin91,Cherlin93}) and for semilattices and semigroups (see, e.g., \cite{Truss99,Quinnband,Quinninv}).

Our second fundamental property is $\omega$-categoricity, where a countable structure $\mathcal{M}$ is called $\omega$-categorical if it can be defined uniquely, up to isomorphism, by its first order properties.
 The much celebrated Ryll-Nardzewski Theorem states that $\omega$-categoricity is equivalent to the purely algebraic condition  that the automorphism group $\Aut(\mathcal{M})$  is \textit{oligomorphic}. That is, for each $n\geq 1$ the coordinate-wise action of $\Aut(\mathcal{M})$ on $M^n$ has only finitely many orbits. 
 However, $\omega$-categoricity is often too weak to allow for classifications,  with the exception of more simplistic  structures such as   abelian groups \cite{Ros73} and linear orders  \cite{Ros69}.
For non-abelian groups, it follows from the work in \cite{SaracinoWood} that there exists  $2^\omega$ $\omega$-categorical groups, and   a meaningful classification of $\omega$-categorical groups thus seems unlikely. 
Nevertheless, advances in the $\omega$-categoricity  of algebras such as groups, rings, and semigroups continue to be made (see, e.g., \cite{Apps82, BaldwinRose, Quinncat}). 

The main results  of this article is a classification of both the ultrahomogeneous and the $\omega$-categorical monounary algebras.
The former result answers in far greater generality the first half of Problem 1 in \cite{Sokic}, which asks for all countable  locally finite ultrahomogeneous monounary algebras (here stated in the language of Fra\"iss\'e limits). 

 We will show that much of what determines whether a monounary algebra $\mathcal{A}$ is ultrahomogeneous or $\omega$-categorical  is whether it has the   corresponding pointwise property, i.e., is 1-ultrahomogeneous or $\Aut(\mathcal{A})$ has finitely many orbits on $M$, respectively. 

Our results on ultrahomogeneity will allow for further classifications of other similar conditions on the automorphism group of a monounary algebra. Namely, we describe all  [partial]  homogeneous monounary algebras, that is, ones in which isomorphisms between finite [partial]  subalgebras  extend to an automorphisms. 
Moreover, we show the following pleasing connection in greater generality in Theorem \ref{thm: lattice of conditions}:

\begin{corollary*}
    For the class of monounary algebras we have 
  \begin{align*}
      \text{Transitive automorphism group } &\Rightarrow \text{partial homogeneous}   \Rightarrow  \text{ultrahomogeneous}  \\
&\Rightarrow \text{homogeneous}. 
   \end{align*}
\end{corollary*}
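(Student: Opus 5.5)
We prove the three implications separately, working directly from the definitions. Throughout, for a monounary algebra $\mathcal{A}=(A,f)$, the subalgebra generated by a finite set $X$ is $\langle X\rangle=\{f^n(x):x\in X,\ n\geq 0\}$. A partial subalgebra on a finite set $Y$ is $Y$ with the partial map $f|_{Y\cap f^{-1}(Y)}$. A partial isomorphism is a bijection $\phi:Y\to Y'$ with $f(y)\in Y \iff f(\phi y)\in Y'$, and in that case $\phi(f(y))=f(\phi(y))$.

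\textbf{Partial homogeneous $\Rightarrow$ ultrahomogeneous.} Let $\phi:\langle X\rangle\to\langle X'\rangle$ be an isomorphism of finitely generated subalgebras. If $\langle X\rangle$ is finite, then it is itself a finite partial subalgebra with total operation, and $\phi$ is a partial isomorphism, so it extends. In general $\langle X\rangle$ may be infinite, since $X$ may contain elements with infinite forward orbits. In that case, enumerate $\langle X\rangle=\bigcup_k Y_k$ with $Y_k=\{f^n(x):x\in X,\ n\leq k\}$. Each restriction $\phi|_{Y_k}$ is a partial isomorphism, since $\phi$ preserves $f$ and $\phi$ is a bijection onto $\phi(Y_k)$. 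So each restriction extends to an automorphism, but these automorphisms need not cohere. This is the main obstacle. I would handle it using the structure of monounary algebras: an automorphism of $\mathcal{A}$ is determined on each connected component by independent choices. It therefore suffices to extend $\phi$ component by component. Within a component $C$ meeting $\langle X\rangle$, both $\langle X\rangle\cap C$ and $\phi(\langle X\rangle\cap C)$ are forward-closed. I expect that the partial automorphism extending $\phi|_{Y_k}$ for $k$ large, once it is composed with $\phi$ on the tail, already yields a well-defined automorphism. The reason is that an automorphism agreeing with $\phi$ on $f^k(x)$ for all large $k$ can be modified on the forward tail using the isomorphism $\phi$ itself, because the tails are already matched. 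If this fails, I would fall back on a back-and-forth over the countably many elements of the components involved.

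\textbf{Ultrahomogeneous $\Rightarrow$ homogeneous.} This implication is immediate. A finite subalgebra is in particular finitely generated, so any isomorphism between finite subalgebras is an isomorphism between finitely generated ones and extends by hypothesis.

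\textbf{Transitive $\Rightarrow$ partial homogeneous.} Transitivity of $\Aut(\mathcal{A})$ forces a very restricted shape. Either every element lies on a cycle of a fixed length $n$, so that $\mathcal{A}$ is a disjoint union of $n$-cycles, or no element is periodic. In the second case every element has an infinite forward orbit and the same structure of preimages everywhere. Combined with transitivity, this makes every component a homogeneous ``regular tree'': each element has the same number $\kappa$ of preimages, and in the aperiodic case the components look alike. I would then extend a finite partial isomorphism $\phi$ inside this explicit description. First, close $Y$ and $\phi(Y)$ under enough steps of $f$ that elements in the same component become connected. Here the concern is that two elements of $Y$ might lie in one component while their images lie in different components. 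Transitivity alone does not exclude this, so I would check whether the conclusion needs extra care. If necessary, I would argue from the classification of transitive monounary algebras in \cite{monohom}, showing that in each listed case such distances are unconstrained, because preimage degrees are uniform. Finally, extend freely using the uniform branching, matching preimage sets bijectively level by level.
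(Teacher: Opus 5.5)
The implication transitive $\Rightarrow$ partially homogeneous cannot be proved, because it is false. Your argument breaks at the step you flagged and then dismissed, namely the claim that in each transitive algebra ``such distances are unconstrained, because preimage degrees are uniform.'' They are constrained. An automorphism $\psi$ commutes with $\theta$, so $\theta^k(x)=y$ forces $\theta^k(\psi(x))=\psi(y)$, and $\psi$ preserves connected components. A partial isomorphism between edgeless finite partial subalgebras sees none of this.

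Concretely, $\mathcal{Z}_5$ is transitive via rotations. The sets $\{0,2\}$ and $\{0,3\}$ both induce partial subalgebras with empty domain, so $0\mapsto 0$, $2\mapsto 3$ is a partial isomorphism. But the only automorphism fixing $0$ is the identity. The same failure occurs in $(\mathbb{Z};\suc)\cong\mathcal{B}_1$ with $\{0,2\}\to\{0,5\}$, and in $2\cdot\mathcal{Z}_4$ with $\{0,2\}$ sent to a pair from different components. So no level-by-level extension can work.

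The paper's own justification of $\mathcal{T}\subseteq\mathcal{PH}$, via the classification of transitive algebras and Theorem~\ref{thm:hom relationalform}, fails at the same point. The algebras $\kappa\cdot\mathcal{Z}_n$ with $n\geq 5$ and $\mathcal{B}_\alpha$ are transitive but do not appear in the list (i)--(v). The paper itself records $\mathcal{Z}_5\in\mathcal{PH}_1\setminus\mathcal{PH}$.

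What does hold is the weaker implication transitive $\Rightarrow$ ultrahomogeneous. Given an isomorphism $\phi\colon\langle x\rangle\to\langle y\rangle$, transitivity gives $\psi\in\Aut(\mathcal{A})$ with $\psi(x)=\phi(x)$. Then $\psi$ extends $\phi$, since both commute with $\theta$ and agree on the generator $x$. Now apply Proposition~\ref{prop:early hom iff 1hom}.

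For partially homogeneous $\Rightarrow$ ultrahomogeneous, the coherence problem you identify does not exist. Let $\phi\colon\langle X\rangle\to\langle X'\rangle$ be an isomorphism with $X$ finite. Then $\phi|_X\colon X\to\phi(X)$ is an isomorphism of the induced partial subalgebras in both directions: if $\theta(\phi(x))=\phi(x')$ then $\phi(\theta(x))=\phi(x')$, so $\theta(x)=x'\in X$. Hence $\phi|_X$ extends to some $\psi\in\Aut(\mathcal{A})$. Then $\psi(\theta^k(x))=\theta^k(\phi(x))=\phi(\theta^k(x))$ for all $x\in X$ and $k\in\omega$, so $\psi$ extends $\phi$. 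This needs no sets $Y_k$, no tail modification and no back-and-forth, and it works whether or not $\langle X\rangle$ is finite.

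This direct argument is more elementary than the paper's route, which obtains $\mathcal{PH}\subseteq\mathcal{UH}$ by comparing cited classifications of partially 1- and 2-homogeneous algebras with Theorem~\ref{thm:hom class}. Your argument for ultrahomogeneous $\Rightarrow$ homogeneous is correct and is what the paper uses implicitly.
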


This paper is ordered as follows. In Sections 2 and 3 we give a self-contained introduction to model theory and monounary algebras, respectively. 
 In Section 4  we produce a classification of ultrahomogeneous monounary algebras of arbitrary cardinalities. 
In the context of monounary algebras, a strong link is then shown between ultrahomogeneity  and similar conditions on an automorphism group, namely $n$-partial homogeneity, $n$-homogeneity, and transitivity. 
 This is then used to classify the ultrahomogeneity of directed pseudoforests. 
We begin Section 5 by considering  locally finite and uniformly locally finite (ULF) monounary algebras. 
By the Ryll-Nardzewski Theorem these properties are weaker than that of $\omega$-categoricity. By forming induced semilinear orders from a monounary algebra, where $\omega$-categoricity is known, we are then able to describe all $\omega$-categorical monounary algebras. 
Recent results on monadic stability are then used to show that there exists only countably many $\omega$-categorical (mono)unary algebras. 
In Section 6 we discuss the limits of generalizing our results to unary algebras. 

Throughout, we let $\mathbb{N}=\{1,2,\dots\}$ and  $\omega=\mathbb{N}\cup \{0\}$. 

\section{Model theory preliminaries} 

In this section we give an introduction to the required model theoretic notions, and  refer the reader to \cite{Hodges97} for a detailed study.

\subsection{First order structures}

A \textit{signature} $\tau$  is a set of relation symbols and function symbols,  where  each  symbol  is  associated  with  a  natural  number,  called  its \textit{arity}.
 A  \textit{$\tau$-structure} $\mathcal{M}$ is a tuple $(M; (X^{\mathcal{M}})_{X\in \tau})$ where $M$ is a set (call the \textit{domain}) and $X^{\mathcal{M}}\subseteq M^n$ if $X$ is a relation symbol of arity $n$, and $X^{\mathcal{M}}\colon M^n\rightarrow M$ if $X$ is a function of arity $n$; in the latter case we allow $n=0$ to model constant symbols. 
When no confusion may arise  we use the same symbol for a relation and its relation symbol, and likewise for functions.
 Unless stated otherwise, we will assume that the   structures $\mathcal{A,B,C},\dots$ have domains $A,B,C,\dots$. 

\begin{definition}\label{defn:hom} Let  $\mathcal{M}$ and $\mathcal{N}$ be a pair of   $\tau$-structures.  A bijection $g\colon M\rightarrow N$ is called an  \textit{isomorphism} if 
 \begin{itemize}[leftmargin=*]
\item  $(x_1,\dots,x_n)\in R^{\mathcal{M}} \Leftrightarrow (g(x_1),\dots,g(x_n))\in R^{\mathcal{N}}$  for  every  relation symbol $R\in \tau$, and 
\item  $g(f^{\mathcal{M}}(x_1,\dots,x_n))= f^{\mathcal{N}}(g(x_1),\dots,g(x_n))$ for every function symbol $f\in \tau$. 
\end{itemize}
If, further, $\mathcal{M}=\mathcal{N}$ then $g$ is called an \textit{automorphism}.
   The group  of  automorphisms of $\mathcal{M}$ is denoted by $\Aut(\mathcal{M})$. 
  \end{definition}

A \textit{(binary) relational structure} is one in which its signature has only (binary) relation symbols.

\subsection{Permutation groups and orbit growth} 

For a set $X$, we write Sym$(X)$  for the group of all permutations of $X$. 
Let $G$ be a permutation group on a set $X$, that is,  a subgroup of Sym$(X)$. 
  An \textit{orbit} of $G$ is a set of the form $\{g(x):g\in G\}$ for some $x\in X$. For each $n\in \mathbb{N}$ there is a natural action of $G$ on $X^n$, where $g (x_1,\dots,x_n) = (g(x_1),\dots,g(x_n))$; we define an \textit{$n$-orbit} of $G$ to be an orbit of this action.  We let $o_n(G)$ denotes the number of $n$-orbits of $G$. 

\begin{definition} Let $G$ be a permutation group on a set $X$. 
\begin{itemize}[leftmargin=*]
\item If $o_n(G)$ is finite for each $n\in \mathbb{N}$ then we call $G$ \textit{oligomorphic}.
\item If  $o_1(G)=1$,  that is, if for every $x,y\in X$ there exists $g\in G$ with $g(x)=y$,  then we call $G$ \textit{transitive}. 
\end{itemize}  
\end{definition} 

Given a structure $\mathcal{M}$, we let
 $o_n(\mathcal{M}):= o_n(\Aut(\mathcal{M}))$, and by an \textit{$n$-orbit of $\mathcal{M}$} we will mean an $n$-orbit of the action of $\Aut(\mathcal{M})$ on $M$. Similarly, we call $\mathcal{M}$ \textit{transitive} if $\Aut(\mathcal{M})$ is. 
\subsection{$\omega$-categoricity  and definability} 

In the following, let $\mathcal{M}$ be a $\tau$-structure. 
Given a (first order) $\tau$-sentence $\phi$, we write $\mathcal{M}\models \phi$ if $\phi$ is true in $\mathcal{M}$; the set of all $\tau$-sentences true in $\mathcal{M}$ is called the \textit{theory of $\mathcal{M}$}, denoted Th$(\mathcal{M})$. 

\begin{definition} A countable structure $\mathcal{M}$ is called \textit{$\omega$-categorical} if Th$(\mathcal{M})$ uniquely defines $\mathcal{M}$ up to isomorphism, that is, if every countable structure with the same theory as $\mathcal{M}$
 is isomorphic to $\mathcal{M}$. 
\end{definition} 

Classical examples of $\omega$-categorical structures include the dense linear order without endpoints $(\mathbb{Q};<)$ and the random graph; algebraic examples are discussed in Subsection \ref{subsec:stable}. 

Several equivalent characterisations of $\omega$-categoricity were independently shown  by Engeler, Ryll-Nardzewski, and Svenonius, although commonly referred to as the Ryll-Nardzewski Theorem (see, e.g., \cite{Hodges97}).
Of particular importance here is the link to oligomorphicity: 

\begin{theorem}[Ryll-Nardzewski Theorem (RNT)] Let $\mathcal{M}$ be a countable structure. Then   $\mathcal{M}$ is $\omega$-categorical if and only if $\Aut(\mathcal{M})$ is oligomorphic.
\end{theorem}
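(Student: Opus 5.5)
The Ryll-Nardzewski Theorem is classical and the paper cites it rather than proving it, but here is how I would prove it. The key tool is the space of complete $n$-types of the countable complete theory $T=\mathrm{Th}(\mathcal{M})$. I would prove both directions using the equivalence of the following for such $T$ (in a countable signature, which I assume, as is standard):
\begin{enumerate}[label=(\roman*)]
\item $T$ is $\omega$-categorical;
\item for each $n$ there are only finitely many complete $n$-types over $\emptyset$, each of which is isolated;
\item $\Aut(\mathcal{M})$ is oligomorphic.
\end{enumerate}

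For (i)$\Rightarrow$(ii), suppose some $n$-type $p$ is non-isolated. By the Omitting Types Theorem there is a countable model of $T$ omitting $p$. By compactness and downward L\"owenheim--Skolem there is also a countable model realizing $p$. These two models are non-isomorphic, contradicting (i). So every type is isolated. The type space $S_n(T)$ is compact, and if every point is isolated it is finite.

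For (ii)$\Rightarrow$(iii), I would first show that a countable model $\mathcal{M}$ in which every realized type is isolated (an atomic model) is homogeneous. Given tuples $\bar a,\bar b$ with the same type, a back-and-forth argument extends the map $\bar a\mapsto\bar b$ to an automorphism. At each step, the isolating formula of $\mathrm{tp}(\bar a c)$ yields a formula $\exists y\,\phi(\bar x,y)$ true of $\bar a$, hence of $\bar b$, so a matching $d$ exists. Consequently $n$-orbits correspond exactly to realized $n$-types, and there are finitely many of these.

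For (iii)$\Rightarrow$(i), finitely many $n$-orbits implies finitely many realized $n$-types in $\mathcal{M}$, since automorphisms preserve types. The real obstacle is to show that $T$ itself has only finitely many $n$-types, so that (ii) holds. For this I would use the fact that each orbit $O$ is $\emptyset$-definable in a suitable sense. Take $\bar a\in O$. For any $\bar b\notin O$, the tuples $\bar a$ and $\bar b$ lie in different orbits. To get a formula that separates them, I would argue that with finitely many $n$-orbits and finitely many $(n{+}1)$-orbits one can run the back-and-forth directly. Define $\bar a\sim\bar b$ when they satisfy the same formulas, and show that $\sim$ is a back-and-forth system; then $\sim$-classes coincide with orbits. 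Since there are finitely many classes, each class is defined by a single formula, namely a conjunction of formulas distinguishing it from the finitely many other classes. The disjunction of these finitely many formulas is true in $\mathcal{M}$ for all $\bar x$, so it lies in $T$. Hence every $n$-type contains one of them and is isolated by it, which gives (ii).

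Finally, with (ii) in hand, any two countable models of $T$ are atomic. Back-and-forth between them, again using the isolating formulas, builds an isomorphism, which gives (i).
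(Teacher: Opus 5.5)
The paper gives no proof of this statement. It quotes the Ryll-Nardzewski Theorem as a classical result, citing Hodges, and uses it only as a black box, so there is no paper proof to compare against. Your argument is the standard type-space proof and is correct under the countable-signature assumption you state. That assumption covers every use in the paper, since monounary and unary algebras and their relational forms all have finite signatures.

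The one place to tighten is (iii)$\Rightarrow$(ii). There you need neither definability of orbits nor a back-and-forth for $\sim$. The number of $n$-types realized in $\mathcal{M}$ is at most $o_n(\mathcal{M})$; list them as $q_1,\dots,q_k$. Choose $\psi_{ij}\in q_i\setminus q_j$ and put $\chi_i=\bigwedge_{j\neq i}\psi_{ij}$. Then each $\chi_i$ defines in $\mathcal{M}$ exactly the realizations of $q_i$, so $\chi_i$ is a complete formula of $T$. Since $\forall\bar x\,\bigvee_i\chi_i\in T$, this forces $S_n(T)=\{q_1,\dots,q_k\}$ with every type isolated, which is (ii).

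If you keep the detour through ``$\sim$-classes are orbits'' (which is true), its extension step is exactly this construction applied to the realized $(m+1)$-types for every $m\ge n$, not just $m=n$. As written, you assert that $\sim$ is a back-and-forth system without supplying that step.
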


As a consequence, an $\omega$-categorical structure is \textit{locally finite}, that is,  is such that every finitely generated substructure is finite.
 Oligomorphicity also forces a bound on the sizes of these substructures (a proof of the corollary below can be found  in \cite{Hodges97}).  

\begin{definition} A structure $\mathcal{M}$ is called \textit{uniformly locally finite} (ULF) if there is a function $f\colon \omega\rightarrow \omega$ such that, for any $n\in \omega$, each substructure of $\mathcal{M}$ generated by $n$ elements has at most $f(n)$ elements. 
\end{definition} 
 
\begin{corollary}\label{cor:catisULF} An $\omega$-categorical structure is ULF. 
\end{corollary}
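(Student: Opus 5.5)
The plan is to define $f(n)$ as the number of $n$-orbits of $\Aut(\mathcal{M})$ and show that it bounds the size of every $n$-generated substructure. Let $\mathcal{M}$ be $\omega$-categorical. By the Ryll-Nardzewski Theorem, $\Aut(\mathcal{M})$ is oligomorphic, so $o_{n+1}(\mathcal{M})$ is finite for each $n$. I will prove that any substructure generated by $\bar a=(a_1,\dots,a_n)$ has at most $o_{n+1}(\mathcal{M})$ elements, and then set $f(n)=o_{n+1}(\mathcal{M})$. For $n=0$ the substructure generated by the empty set is generated by constants, and the same argument applies with $o_1$.

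The key observation concerns an element $b$ of $\langle \bar a\rangle$. Such a $b$ is the value $t(\bar a)$ of some term $t$, so every automorphism fixing $\bar a$ pointwise also fixes $b$. Consequently, if $b\neq b'$ are both in $\langle\bar a\rangle$, then the tuples $(\bar a,b)$ and $(\bar a,b')$ lie in different $(n+1)$-orbits. Indeed, if $g(\bar a,b)=(\bar a,b')$, then $g$ fixes $\bar a$, hence fixes $b$, which gives $b=b'$. The map $b\mapsto$ orbit of $(\bar a,b)$ is therefore injective from $\langle\bar a\rangle$ into the set of $(n+1)$-orbits, which yields $|\langle\bar a\rangle|\le o_{n+1}(\mathcal{M})$.

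No step is a real obstacle. The only point needing care is that elements of a generated substructure are exactly term values, so they are fixed by the pointwise stabiliser of the generators. This follows because automorphisms commute with all the function symbols, by Definition~\ref{defn:hom}, and hence commute with all terms; one proves this by a routine induction on terms. As a consequence, $\mathcal{M}$ is in particular locally finite.
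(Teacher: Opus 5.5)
Your proof is correct. The injection $b\mapsto$ (orbit of $(\bar a,b)$) from $\langle\bar a\rangle$ into the set of $(n+1)$-orbits is sound, because any automorphism fixing $\bar a$ fixes every term value $t(\bar a)$. It gives the uniform bound $f(n)=o_{n+1}(\mathcal{M})$, including the case $n=0$.

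The paper itself gives no argument and simply defers to Hodges. There the bound is usually obtained from the syntactic side of the Ryll-Nardzewski Theorem:
\begin{itemize}
\item there are only finitely many formulas $\varphi(x_1,\dots,x_n,y)$ up to equivalence in $\mathcal{M}$;
\item terms $t_1,\dots,t_k$ with pairwise distinct values at $\bar a$ give pairwise inequivalent formulas $y=t_i(\bar x)$;
\item hence $|\langle\bar a\rangle|$ is at most that finite number.
\end{itemize}
Your orbit count is the automorphism-group translation of this same idea, via the correspondence between orbits and types. Its mild advantage here is that it uses only the oligomorphic formulation of the RNT, which is the one the paper actually states. Your argument is therefore self-contained relative to the paper and yields an explicit $f$.
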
 

A relation $R\subseteq M^n$ is said to be \textit{definable} (in $\mathcal{M})$ if there is a $\tau$-formula $\phi(\underline{x})$ such that $\underline{a}\in R \Leftrightarrow \mathcal{M}\models \phi(\underline{a})$.
 A function $g\colon M^n\rightarrow M$ is \textit{definable} if its hypergraph $R_g\subseteq M^{n+1}$ is definable, where 
 $$R_g = \{(x_1, \dots ,x_n, g(x_1, \dots , x_n))\}.$$

Let $\tau\subseteq \sigma$ be signatures and let $\mathcal{M}$ be a $\tau$-structure and $\mathcal{N}$ be a $\sigma$-structure over the same domain. 
If $R^\mathcal{M}=R^\mathcal{N}$ for each relation $R\in \tau$ and $f^\mathcal{M}=f^\mathcal{N}$ for each function $f\in \tau$ then $\mathcal{M}$ is called a \textit{reduct} of $\mathcal{N}$, and $\mathcal{N}$ is an \textit{expansion} of $\mathcal{M}$. 
An expansion $\mathcal{N}$ of $\mathcal{M}$ is called \textit{first-order}  if every relation and function in $\sigma\setminus \tau$ is definable in $\mathcal{M}$. 
A reduct of a first-order expansion of $\mathcal{M}$ is called a \textit{first-order reduct}. 
A pair of structures which are first-order reducts of each other are called \textit{interdefinable}.  

\begin{lemma} \label{lemma:reduct cat} Let $\mathcal{M}$ be an $\omega$-categorical structure and let $\mathcal{N}$ be a structure with the same domain as $\mathcal{M}$.
 Then $\mathcal{N}$ is a first-order reduct of $\mathcal{M}$ if and only if  $\Aut(\mathcal{N})\supseteq\Aut(\mathcal{M})$.
  In particular, every first-order reduct of an $\omega$-categorical structure is $\omega$-categorical.
\end{lemma}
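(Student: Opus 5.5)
The forward direction is routine. Suppose $\mathcal{N}$ is a first-order reduct of $\mathcal{M}$. Then every relation and function of $\mathcal{N}$ is definable in $\mathcal{M}$, say by formulas $\phi_X(\underline{x})$. Any $g\in\Aut(\mathcal{M})$ preserves truth of all formulas:
\[
\mathcal{M}\models\phi(\underline{a}) \iff \mathcal{M}\models\phi(g\underline{a}),
\]
which I would check by a standard induction on formulas. Hence $g$ preserves each relation $X^{\mathcal{N}}$, and each function of $\mathcal{N}$ via its graph $R_f$, so $g\in\Aut(\mathcal{N})$. Neither this step nor the induction uses $\omega$-categoricity.

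For the converse, assume $\Aut(\mathcal{N})\supseteq\Aut(\mathcal{M})$, and fix a relation $R^{\mathcal{N}}\subseteq M^n$ (or the graph $R_f\subseteq M^{n+1}$ of a function symbol). Since each element of $\Aut(\mathcal{M})$ is an automorphism of $\mathcal{N}$, the set $R$ is invariant under $\Aut(\mathcal{M})$, so it is a union of $n$-orbits of $\Aut(\mathcal{M})$. By the Ryll-Nardzewski Theorem there are only finitely many $n$-orbits, so $R$ is a finite union of them. It therefore suffices to show that each $n$-orbit of an $\omega$-categorical structure is definable, since a finite disjunction of defining formulas then defines $R$.

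This orbit-definability is the main obstacle, and it is the part of the Ryll-Nardzewski Theorem that is not quoted in the oligomorphic form stated above. My plan is to use the fuller statement: in an $\omega$-categorical structure, two $n$-tuples with the same complete type lie in the same $\Aut(\mathcal{M})$-orbit, shown by a back-and-forth argument using that $\mathcal{M}$ is countable and saturated. Moreover, there are finitely many $n$-types and each is isolated by a single formula. The orbit of $\underline{a}$ is then exactly the set of realizations of $\mathrm{tp}(\underline{a})$, which is defined by the isolating formula. Alternatively, if I take only oligomorphicity as given, I can argue directly. Distinct orbits $O\neq O'$ have distinct types, which is the back-and-forth fact again. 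So for each $O'\neq O$ I pick a formula true on $O$ and false on $O'$. The conjunction of these finitely many formulas defines $O$, because truth of formulas is constant on orbits. Either way, I take the back-and-forth/type statement as the standard content of RNT from \cite{Hodges97}.

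Finally, the "in particular" clause follows. If $\mathcal{N}$ is a first-order reduct of $\mathcal{M}$, then $\Aut(\mathcal{N})\supseteq\Aut(\mathcal{M})$ by the forward direction. Since $\mathcal{N}$ has the same domain as $\mathcal{M}$, each $n$-orbit of $\Aut(\mathcal{N})$ is a union of $n$-orbits of $\Aut(\mathcal{M})$, so $o_n(\mathcal{N})\le o_n(\mathcal{M})<\infty$. As $\mathcal{N}$ is countable, RNT gives that $\mathcal{N}$ is $\omega$-categorical.
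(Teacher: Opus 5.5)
Your proof is correct. The paper itself gives no proof of this lemma. It is recorded in the preliminaries as a standard consequence of the Ryll-Nardzewski Theorem, with Hodges' book as the implicit reference. So the only thing to say is that your argument is the standard one:
\begin{itemize}
\item automorphisms preserve definable relations;
\item conversely, an $\Aut(\mathcal{M})$-invariant relation is a finite union of $n$-orbits;
\item each orbit is definable without parameters.
\end{itemize}
You are right that orbit-definability is not contained in the oligomorphic form of the RNT as the paper states it. You are also right that both of your routes rest on the fact that tuples with the same complete type lie in the same orbit.

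If you want your second route to be self-contained, without appealing to saturation, that fact follows directly from oligomorphicity and countability.
\begin{itemize}
\item Take $m$-tuples $\underline{a}',\underline{b}'$ with $\mathrm{tp}(\underline{a}')=\mathrm{tp}(\underline{b}')$, and $c\in M$.
\item Each $(m+1)$-orbit carries a single type, so only finitely many types of $(m+1)$-tuples are realised in $\mathcal{M}$.
\item Hence there is one formula $\chi(\underline{x},y)\in\mathrm{tp}(\underline{a}'c)$ satisfied by no realised $(m+1)$-tuple of a different type: conjoin one separating formula for each of the finitely many other realised types.
\item Since $\exists y\,\chi(\underline{x},y)\in\mathrm{tp}(\underline{a}')=\mathrm{tp}(\underline{b}')$, there is $d\in M$ with $\mathrm{tp}(\underline{b}'d)=\mathrm{tp}(\underline{a}'c)$.
\end{itemize}
Alternating this forth and back along an enumeration of $M$ yields a bijection preserving all atomic formulas. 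That is an automorphism of $\mathcal{M}$ sending $\underline{a}$ to $\underline{b}$. With this in hand, your conjunction-of-separating-formulas argument defines each orbit using only the oligomorphicity of $\Aut(\mathcal{M})$ and the countability of $\mathcal{M}$.
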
 

Consequently, a pair of $\omega$-categorical interdefinable structures have the same automorphism groups.

\subsection{Ultrahomogeneity} 

\begin{definition} A   structure  $\mathcal{M}$  is called $n$-\textit{ultrahomogeneous} if every isomorphism between $n$-generated substructures of $\mathcal{M}$ extends to an automorphism of $\mathcal{M}$, and is \textit{ultrahomogeneous} if it is $n$-ultrahomogeneous for each $n\in \mathbb{N}$. 
\end{definition} 

Countable  ultrahomogeneous relational structures with finite signatures are necessarily $\omega$-categorical (and thus so too are first-order reducts of such structures), while the converse needs not hold in general.

 An alternative description of countable ultrahomogeneous structures was given by Fra\"iss\'e \cite{Fraisse}, originally only for linear orders and then extended to all countable signatures.
  Let $\mathfrak{C}$ be a class of finitely generated $\tau$-structures for some signature $\tau$. We say that $\mathfrak{C}$  has the
 \begin{itemize}[leftmargin=*]
 \item   \textit{hereditary property (HP)} if it is closed under substructures; 
 \item \textit{joint embedding property (JEP)} if every pair $\mathcal{A,B}\in \mathfrak{C}$ embeds into some $\mathcal{C}\in \mathfrak{C}$;  
 \item   \textit{amalgamation property (AP)} if whenever $\mathcal{A},\mathcal{B}_1,\mathcal{B}_2\in \mathfrak{C}$ and $f_i\colon \mathcal{A}\rightarrow \mathcal{B}_i$ is an embedding $(i=1,2)$, then there exists $\mathcal{C}\in \mathfrak{C}$ and embeddings $g_i\colon \mathcal{B}_i\rightarrow \mathcal{C}$ such that $g_1\circ f_1(a) = g_2\circ f_2(a)$ for each $a\in A$. 
 \end{itemize}
 
The \textit{age} of a structure $\mathcal{M}$,  denoted age$(\mathcal{M})$, is the class of all finitely generated structures which embed in $\mathcal{M}$. 
 
 \begin{theorem}[Fra\"iss\'e's Theorem] Let $\tau$ be a countable signature and let $\mathfrak{C}$ be a non-empty class of finitely generated $\tau$-structures which is countable, up to isomorphism, and has the HP, JEP, and AP. 
  Then there is a unique countable $\tau$-structure $\mathcal{M}$ which is ultrahomogeneous and with age$(\mathcal{M})$= $\mathfrak{C}$; we call $\mathcal{M}$ the \textit{Fra\"iss\'e limit} of $\mathfrak{C}$. 
 \end{theorem}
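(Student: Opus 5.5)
The statement is Fraïssé's Theorem. I would prove it in the classical way: build a countable structure that is "rich" with respect to $\mathfrak{C}$, then show that richness is equivalent to ultrahomogeneity (given the right age), and finally prove uniqueness by back-and-forth.

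\textbf{Existence.} Since $\mathfrak{C}$ is countable up to isomorphism and $\tau$ is countable, the set of relevant tasks is countable. A task is a pair of members $\mathcal{A}\subseteq\mathcal{B}$ of $\mathfrak{C}$, taken up to isomorphism, together with an embedding of $\mathcal{A}$ into the current stage. I would build a chain $\mathcal{M}_0\subseteq\mathcal{M}_1\subseteq\cdots$ of members of $\mathfrak{C}$ and handle the tasks one at a time.
\begin{itemize}
\item Given an embedding $f\colon\mathcal{A}\to\mathcal{M}_i$ and an extension $\mathcal{A}\subseteq\mathcal{B}$, apply the AP to $f$ and the inclusion $\mathcal{A}\hookrightarrow\mathcal{B}$. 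This gives $\mathcal{M}_{i+1}\in\mathfrak{C}$ containing $\mathcal{M}_i$ in which $f$ extends to $\mathcal{B}$.
\item Use the JEP to make sure every member of $\mathcal{C}$ eventually embeds.
\item Enumerate all tasks by a dovetailing bookkeeping (pairs $(i,\text{task})$ in order type $\omega$), so that every task arising at any stage is handled at some later stage.
\end{itemize}
The union $\mathcal{M}=\bigcup\mathcal{M}_i$ is countable. Its age is exactly $\mathfrak{C}$: a finitely generated substructure of $\mathcal{M}$ lies in some $\mathcal{M}_i$, so it is in $\mathfrak{C}$ by the HP, and conversely every member of $\mathfrak{C}$ embeds by the JEP steps. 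Moreover $\mathcal{M}$ has the extension property (EP): whenever $\mathcal{A}\subseteq\mathcal{B}$ are in $\mathfrak{C}$ and $f\colon\mathcal{A}\to\mathcal{M}$ is an embedding, $f$ extends to an embedding $\mathcal{B}\to\mathcal{M}$.

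\textbf{From the EP to ultrahomogeneity, and uniqueness.} Both follow from one back-and-forth argument. Suppose $\mathcal{M}$ and $\mathcal{N}$ are countable, both have age $\mathfrak{C}$, and both have the EP. Let $p\colon\mathcal{A}\to\mathcal{A}'$ be an isomorphism between finitely generated substructures of $\mathcal{M}$ and $\mathcal{N}$ respectively, and enumerate $M$ and $N$.
\begin{itemize}
\item Forth: given the next element $m$, the structure $\langle \mathcal{A},m\rangle$ is finitely generated and in $\mathfrak{C}$. Applying the EP in $\mathcal{N}$ to $p^{-1}$ extends $p$ to $\langle\mathcal{A},m\rangle$.
\item Back: do the symmetric step for the next element of $N$.
\end{itemize}
The union of these maps is an isomorphism $\mathcal{M}\to\mathcal{N}$ extending $p$.

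Taking $\mathcal{N}=\mathcal{M}$ shows $\mathcal{M}$ is ultrahomogeneous.

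For uniqueness, I first show that any countable ultrahomogeneous $\mathcal{N}$ with age $\mathfrak{C}$ has the EP. Take $\mathcal{A}\subseteq\mathcal{B}$ in $\mathfrak{C}$ and $f\colon\mathcal{A}\to\mathcal{N}$. Since $\mathcal{B}$ is in the age, embed it into $\mathcal{N}$ as some $g(\mathcal{B})$. Now $f\circ g^{-1}$ restricted to $g(\mathcal{A})$ is an isomorphism between finitely generated substructures of $\mathcal{N}$. It extends to an automorphism $\sigma$, and $\sigma\circ g$ extends $f$. Uniqueness then follows from the back-and-forth with $p$ the empty map. If there are no constants, the empty structure is fine; with constants, use the substructure generated by the empty set, which is isomorphic in $\mathcal{M}$ and $\mathcal{N}$ because both have age $\mathfrak{C}$ and the JEP-compatible ages agree.

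\textbf{Main obstacle.} The only delicate point is the existence bookkeeping. Tasks arise at every stage, and each stage has only countably many, because each $\mathcal{M}_i$ is countable (a finitely generated structure in a countable signature) and $\mathfrak{C}$ has countably many isomorphism types. A standard pairing function $\omega\times\omega\to\omega$ schedules them so that each task is handled at some later stage.
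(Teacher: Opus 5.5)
The paper does not prove this theorem; it quotes it as Fra\"iss\'e's classical result. Your argument is the standard textbook proof and it is correct: the chain construction via AP and JEP with dovetailed bookkeeping, the extension property, the equivalence of the extension property with ultrahomogeneity for countable structures of the given age, and back-and-forth for uniqueness.

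There are two small slips, neither of which is a gap:
\begin{itemize}
\item In the forth step, the extension property of $\mathcal{N}$ should be applied to the embedding $p\colon \mathcal{A}\to\mathcal{N}$. The inverse $p^{-1}$ belongs to the back step, where it is used with the extension property of $\mathcal{M}$.
\item The case with constants needs no appeal to the JEP. We have $\langle\emptyset\rangle^{\mathcal{M}}\in\mathfrak{C}=\mathrm{age}(\mathcal{N})$, and any embedding of it into $\mathcal{N}$ must have image $\langle\emptyset\rangle^{\mathcal{N}}$, which gives the starting isomorphism directly.
\end{itemize}
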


 \section{Monounary algebras preliminaries} 
 
A  \textit{partial monounary algebra}  is a pair $\mathcal{A}=(A; \theta)$, where $A$ is a nonempty set and $\theta$ a partial unary operation on $A$; we let dom$(\theta)$ denote the domain of $\theta$. 
  If   dom$(\theta)$=$A$, then   $\mathcal{A}$ is a \textit{monounary algebra}. More generally, a \textit{unary algebra} is a set together with a finite collection of unary operations. 
  
Each non-empty subset $B$ of a monounary algebra $\mathcal{A} $ forms a partial monounary algebra $\mathcal{B}=(B;\theta_B)$, where dom$(\theta_B)$=$\{b\in B:\theta(b)\in B\}$, and $\theta_B(b)=\theta(b)$ if $b\in$ dom$(\theta_B)$; we call $\mathcal{B}$ a \textit{partial subalgebra} of $\mathcal{A}$. If,  further,  $\mathcal{B}$ forms a monounary algebra then  it is called a \textit{subalgebra}. We call $\mathcal{A}$ \textit{connected} if for every $x,y\in A$ there exists $n,m\in \omega$ with $\theta^n(x)=\theta^m(y)$. 
 A maximal connected subalgebra of $\mathcal{A}$ is called a \textit{connected component}, and the set of connected components of $\mathcal{A}$ is denoted by $\mathfrak{C}(\mathcal{A})$. 
 
\begin{notation} \label{not:sum} Given a collection $\{\mathcal{A}_i:i\in I\}$  of pairwise disjoint monounary algebras, we let $\sum_{i\in I} \mathcal{A}_i$ denote the monounary algebra formed by taking the disjoint union of the $\mathcal{A}_i$.
 If there exists a monounary algebra $\mathcal{A}$ with $\mathcal{A}_i\cong \mathcal{A}$ for every $i\in I$  then we may simply denote $\sum_{i\in I}\mathcal{A}_i$ as $I \cdot \mathcal{A}$. 
\end{notation} 
Hence, for any monounary algebra $\mathcal{A}$ with $\mathfrak{C}(\mathcal{A})=\{\mathcal{A}_i:i\in I\}$ we have $\mathcal{A}=\sum_{i\in I} \mathcal{A}_i$. 

A bijection $\phi:\mathcal{A}\rightarrow \mathcal{B}$ between a pair of partial monounary algebras $\mathcal{A}=(A;\theta)$ and $\mathcal{B}=(B;\theta')$ is an \textit{isomorphism} if whenever $x\in \text{dom}(\theta)$ then  $\phi(x)\in \text{dom}(\theta')$    and   $\phi(\theta(x))=\theta'(\phi(x)))$. 
If $\mathcal{A}$ and $\mathcal{B}$ are both monounary algebras (so dom$(\theta)$=$A$ and dom$(\theta')$=$B)$ then the condition simplifies to $\phi(\theta(x))=\theta'(\phi(x))$ for each $x\in A$, and we recover Definition \ref{defn:hom}.

For the remainder of the subsection we let $\mathcal{A}=(A;\theta)$ be a monounary algebra. 
Given a  non-empty subset $B$ of ${A}$, we let $\langle B \rangle$ denote the subalgebra of $\mathcal{A}$ generated by $B$, so that $\langle B \rangle = \{\theta^n(b): b\in B, n\in \omega\}$.

An element $x\in A$ is \textit{cyclic} if $\theta^n(x)=x$ for some $n\in \mathbb{N}$, and is called \textit{acyclic} otherwise. 
The set of all cyclic elements of $\mathcal{A}$ is denoted cyc$(\mathcal{A})$ and forms a subalgebra.
A \textit{cyclic algebra} is a connected monounary algebra in which every element is cyclic.
For each $n\in \mathbb{N}$, there exists a cyclic algebra of size $n$ (also called an \textit{$n$-cycle}),  which is unique up to isomorphism  and denoted  by $\mathcal{Z}_n=(\{0,1,\dots,n-1\};\suc)$, where $\suc(i)=i+1 \mod n$.  

The \textit{height} of an element $x\in A$, denoted ht$(x)$, is the least $n\in \omega$ such that $\theta^n(x)$ is cyclic; if no such $n$ exists then we put ht$(x)=\infty$.
The \textit{height} of $\mathcal{A}$, denoted ht$(\mathcal{A})$, is the least $n\in \omega$ such that ht$(x) \leq n$ for every $x\in A$; if no such $n$ exists then we write ht$(\mathcal{A})=\infty$. 

If $x\in \mathcal{B}\in \mathfrak{C}(\mathcal{A})$ has finite height, then so does every element of $\mathcal{B}$. 
It follows that cyc$(\mathcal{B})\neq \emptyset$ if and only if some  element has finite height. 

\begin{lemma}\label{lemma: subalgebra} Let $\mathcal{B}$ be a subalgebra of a monounary algebra $\mathcal{A}=(A;\theta)$. 
\begin{enumerate}
\item[$(\mathrm{i})$] If $\mathcal{C},\mathcal{C}'\in \mathfrak{C}(\mathcal{B})$ are distinct, then there exist distinct $\mathcal{D},\mathcal{D}'\in \mathfrak{C}(\mathcal{A})$ with $C\subseteq D$ and $C' \subseteq D'$. 
\item[$(\mathrm{ii})$] For any $\mathcal{C}\in \mathfrak{C}(\mathcal{A})$, if $B\cap C\neq \emptyset$ then cyc$(\mathcal{C})\subseteq \mathcal{B}$.   
\end{enumerate}
\end{lemma}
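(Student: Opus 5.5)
For part (i), the plan is to argue by contradiction, using that connectivity in $\mathcal{A}$ is witnessed by iterates of $\theta$, and that iterates never leave a subalgebra.

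First, note that the relation $x\sim y$ iff $\theta^n(x)=\theta^m(y)$ for some $n,m\in\omega$ is an equivalence relation on $A$. Transitivity holds because if $\theta^n(x)=\theta^m(y)$ and $\theta^p(y)=\theta^q(z)$, then applying further powers of $\theta$ gives $\theta^{n+p}(x)=\theta^{m+q}(z)$. Its classes are closed under $\theta$, so they are exactly the connected components of $\mathcal{A}$. In particular every connected subalgebra, such as $\mathcal{C}\in\mathfrak{C}(\mathcal{B})$, lies inside a unique $\mathcal{D}\in\mathfrak{C}(\mathcal{A})$. Let $\mathcal{D}\supseteq C$ and $\mathcal{D}'\supseteq C'$ be these components. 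Suppose $\mathcal{D}=\mathcal{D}'$, and pick $c\in C$ and $c'\in C'$. Then $\theta^n(c)=\theta^m(c')$ for some $n,m$. Since $\mathcal{B}$ is a subalgebra, this common element lies in $B$, so $c$ and $c'$ are connected within $\mathcal{B}$. This contradicts the assumption that $\mathcal{C}$ and $\mathcal{C}'$ are distinct components of $\mathcal{B}$. Hence $\mathcal{D}\neq\mathcal{D}'$.

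For part (ii), take $b\in B\cap C$. If every element of $\mathcal{C}$ has infinite height, then cyc$(\mathcal{C})=\emptyset$ and there is nothing to prove. Otherwise, by the remark preceding the lemma, $b$ has finite height $k$, and $z=\theta^k(b)$ is cyclic and lies in $B$. Since $\mathcal{B}$ is a subalgebra, $\langle z\rangle\subseteq B$, where $\langle z\rangle=\{\theta^i(z):i\in\omega\}$ is the cycle through $z$.

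It remains to show that $\mathcal{C}$ contains only one cycle, so that cyc$(\mathcal{C})=\langle z\rangle$. Let $y\in$ cyc$(\mathcal{C})$. By connectivity, $\theta^n(z)=\theta^m(y)$ for some $n,m$. Because $y$ is cyclic, $y$ lies in $\langle\theta^m(y)\rangle$: if $\theta^p(y)=y$ with $p\ge 1$, choose $s$ with $sp\ge m$, and then $y=\theta^{sp-m}(\theta^m(y))$. Moreover $\langle\theta^m(y)\rangle=\langle\theta^n(z)\rangle\subseteq\langle z\rangle$. Hence $y\in\langle z\rangle\subseteq B$, which proves (ii).

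The only step needing care is this uniqueness of the cycle in a component, together with handling the empty case where $\mathcal{C}$ has infinite height; both are routine.
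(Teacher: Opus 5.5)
Your proof is correct and follows the same route as the paper. For (i) you use that $\theta$-iterates of elements of $B$ stay in $B$, so connectivity in $\mathcal{A}$ and in $\mathcal{B}$ coincide. For (ii) you push $b$ down to a cyclic element $\theta^k(b)\in B$ and note that every cyclic element of $\mathcal{C}$ is a further iterate of it; the only difference is that you prove this last fact (a component contains a single cycle), which the paper simply asserts.
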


\begin{proof} (i) For each $x\in B$ and each $n\in \omega$  we have $\theta^n(x)\in B$  since $\mathcal{B}$ is a subalgebra of $\mathcal{A}$.
 Consequently, elements $x$ and $y$ of $\mathcal{B}$ are connected in $\mathcal{A}$ if and only if they are connected in $\mathcal{B}$, to which the result follows. 

(ii) Let $b\in B\cap C$. If ht$(b)=\infty$ then cyc$(\mathcal{C})$ is empty. Otherwise, there exists $n\in \omega$  such that $\theta^n(b)=d$ is cyclic.
 Then for each $c\in$ cyc($\mathcal{C}$)  there exists $m\in \omega$ with $\theta^m(d)=c$, so $\theta^{n+m}(b)=c \in B$. 
\end{proof}

 A \textit{path} is a sequence of mutually distinct elements  $x_1,x_2,\dots$  of $\mathcal{A}$ such that $\theta(x_i)=x_{i+1}$ $(i\geq 1)$. An \textit{anti-path} is a sequence of mutually distinct elements $x_1,x_2,\dots$ of $\mathcal{A}$ such that $\theta(x_{i+1})=x_{i}$ $(i\geq 1)$. 
An element $a\in A$ is called a \textit{leaf}  if $\theta^{-1}(a)=\emptyset$.

\begin{lemma} Let $\mathcal{A}=(A;\theta)$ be a monounary algebra in which every anti-path is finite, and let $L\subseteq A$ be the set of leaves of $\mathcal{A}$. Let $K$ be a transversal of the set of cyclic connected components of $\mathcal{A}$. Then $L\cup K$ is a minimal generating set of $\mathcal{A}$, and every minimal generating set of $\mathcal{A}$ is obtained in this way.
\end{lemma}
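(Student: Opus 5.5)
We prove three things in turn: every element of $\mathcal{A}$ lies in $\langle L\cup K\rangle$; no proper subset of $L\cup K$ generates $\mathcal{A}$; and every minimal generating set has the form $L\cup K$ for some transversal $K$.

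\emph{Generation.} Let $x\in A$ and follow preimages of $x$ backwards. If $x$ is not a leaf, pick $x_2\in\theta^{-1}(x)$; if $x_2$ is not a leaf, pick $x_3\in\theta^{-1}(x_2)$; and so on. Two things can happen.
\begin{itemize}[leftmargin=*]
\item The elements stay mutually distinct. Then, since every anti-path is finite, the process must stop at a leaf $l$. We get $\theta^n(l)=x$, so $x\in\langle L\rangle$.
\item Some element repeats, say $x_i=x_j$ with $i<j$. Then $\theta^{j-i}(x_i)=x_i$, so $x_i$ is cyclic. Its forward image $x=\theta^{i-1}(x_i)$ is then cyclic as well, so $x$ lies in a cyclic connected component $\mathcal{C}$. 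If $k\in K\cap C$, then $x$ lies in the cycle generated by $k$, so $x\in\langle K\rangle$.
\end{itemize}
To make the second case clean, we argue directly: if $x$ lies in a component with no leaf above it, then the back-chain from $x$ cannot stay mutually distinct (it would be an infinite anti-path), so it repeats, which forces $x$ to be cyclic.

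\emph{Minimality.} First, a leaf $l$ has $\theta^{-1}(l)=\emptyset$, so $l\in\langle S\rangle$ forces $l\in S$ for any generating set $S$. Thus no leaf can be removed. Second, a cyclic connected component is closed under $\theta$ and contains no leaves. Indeed, every element of such a component lies on its cycle, since components are connected and every element is cyclic, and the component's cycle is its whole vertex set. So the component meets $\langle S\rangle$ only via elements of $S$ lying inside it. Hence every generating set must contain at least one element of each cyclic component, and one element suffices, since it generates the whole cycle. Removing $k\in K$ therefore leaves its component ungenerated.

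\emph{Every minimal generating set is of this form.} Let $S$ be a minimal generating set. By the observations above, $L\subseteq S$ and $S$ meets every cyclic component. Since $L\cup K'\subseteq S$ generates $\mathcal{A}$ for a suitable choice of transversal $K'\subseteq S$, minimality gives $S=L\cup K'$.

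The main subtlety, where we must check carefully, is the generation step for elements in a non-cyclic component that lie on a cycle, or below a cycle with no leaf above. Consider a non-cyclic component containing a cycle $c$, where $c$ has a preimage off the cycle. Following the off-cycle branch leads either to a leaf or to an infinite anti-path, so by hypothesis it reaches a leaf $l$, and $\theta^n(l)=c$. The induction must therefore choose the off-cycle preimage when one exists, rather than staying on the cycle. So in the back-chain we prefer non-cyclic preimages. If every preimage of every element along the chain is cyclic, the chain stays in the cycle and the component is purely cyclic, since connectedness then forces no other elements. We would formalize this by choosing, for $x$ with some acyclic element mapping to it eventually, an acyclic $y$ with $\theta^m(y)=x$ and then extending backwards from $y$ through acyclic elements only; these are automatically distinct, so finiteness of anti-paths yields a leaf.
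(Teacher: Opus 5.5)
Your proof is correct and follows the paper's route. Leaves are forced into every generating set because they lie outside the image of $\theta$, each cyclic component needs exactly one representative, and every remaining element lies below some leaf. The inference in your second bullet (that a cyclic $x$ lies in a cyclic component) is false as written, since a cycle can carry tails. Your final paragraph repairs it correctly: you pass to an acyclic element above $x$ and then descend along acyclic preimages, which are automatically distinct, until you reach a leaf. That step is exactly the detail the paper asserts without proof.
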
 

\begin{proof} Each minimal generating set of $\mathcal{A}$ is the disjoint union of minimal generating sets of its connected components. Let $\mathcal{B}\in \mathfrak{C}(\mathcal{A})$. If $\mathcal{B}$ is cyclic, then it is generated by any of its elements. Assume instead that  $\mathcal{B}$ is non-cyclic, so that its set of leaves $L'$ is non-empty (as every anti-path in $\mathcal{B}$ is finite). Then for every $b\in B$ there exists $x\in L'$ and $n\in\omega$ with $\theta^n(x)=b$. Hence $\mathcal{B}$ is generated by $L'$. 
Since the leaves $L'$ are precisely those elements which are not in the image of $\theta$, it follows that any generating set of $B$ must contain $L'$.  The result follows.
\end{proof}

If $\mathcal{A}$ is finitely generated then every anti-path is finite, and thus we have that $\mathcal{A}$ is generated by   its finite set of leaves and a finite collection of cyclic elements. 
We call a monounary algebra \textit{$n$-generated} if it can be generated by $n$-elements. It follows by the lemma above that any connected non-cyclic $n$-generated monounary algebra which is not $(n-1)$-generated must have precisely $n$ leaves. 
  
We let $\mathcal{N}$ denote the monounary algebra $(\mathbb{N};\suc)$ with $\suc(n)=n+1$. 
Notice that cyc$(\mathcal{N})$ is empty,  $\mathcal{N}$ has infinite paths, and every anti-path is finite. 
Moreover, $\mathcal{N}=\langle 1 \rangle$ is 1-generated, and conversely every infinite 1-generated monounary algebra is isomorphic to $\mathcal{N}$. 

\begin{lemma} \label{lemma:cyclic stuff} Let $\mathcal{A}=(A;\theta)$ be a monounary algebra and $\mathcal{B},\mathcal{B}'\in \mathfrak{C}(\mathcal{A})$. 
Then for any $x\in B$ and $y\in B'$ the following are equivalent: 
\begin{enumerate} 
\item[$(1)$]  $\langle x \rangle \cong \langle y \rangle$;
\item[$(2)$]  ht$(x)$ = ht$(y)$ and cyc$(\mathcal{B}) \cong$cyc$(\mathcal{B}')$; 
\item[$(3)$] ht$(x)$ = ht$(y)$ and $|$cyc$(\mathcal{B})|=|$cyc$(\mathcal{B}')|$. 
\end{enumerate}
\end{lemma}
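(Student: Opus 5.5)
We prove the cycle $(1)\Rightarrow(3)\Rightarrow(2)\Rightarrow(1)$. The implication $(3)\Rightarrow(2)$ is immediate. Each cyc$(\mathcal{B})$ is either empty or a single cyclic connected subalgebra, hence isomorphic to $\mathcal{Z}_n$ with $n=|$cyc$(\mathcal{B})|$. (A connected component contains at most one cycle: if $c,c'$ are cyclic and connected, then $\theta^n(c)=\theta^m(c')$ puts them on a common cycle.) So two such sets of equal size are isomorphic, the empty case being trivial.

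For $(1)\Rightarrow(3)$, let $\phi\colon\langle x\rangle\to\langle y\rangle$ be an isomorphism. Since $\langle x\rangle$ is 1-generated, $x$ is a generator. If $\langle x\rangle$ is cyclic then $\phi(x)$ lies on the cycle $\langle y\rangle$. Otherwise $x$ is the unique leaf of $\langle x\rangle$, and an isomorphism maps leaves to leaves, so $\phi(x)=y$. In the cyclic case $\langle y\rangle$ is cyclic too, so $y$ is cyclic and both heights are $0$.

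In every case, $\phi$ preserves cyclicity and the relation $\theta^n(a)=b$. Hence it maps cyc$(\langle x\rangle)$ bijectively onto cyc$(\langle y\rangle)$, and ht$(x)=$ ht$(\phi(x))=$ ht$(y)$ when $\phi(x)=y$. By Lemma \ref{lemma: subalgebra}(ii), cyc$(\langle x\rangle)=$ cyc$(\mathcal{B})$, since $\langle x\rangle\subseteq B$ and any cyclic element of $\langle x\rangle$ lies in $\mathcal{B}$. The same holds for $y$, which gives $(3)$.

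For $(2)\Rightarrow(1)$, split on the common height $h$.

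\textbf{Case $h=\infty$.} Both $\langle x\rangle$ and $\langle y\rangle$ are infinite 1-generated algebras, so both are isomorphic to $\mathcal{N}$.

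\textbf{Case $h<\infty$.} Put $n=|$cyc$(\mathcal{B})|=|$cyc$(\mathcal{B}')|$. By the definition of height, the elements $x,\theta(x),\dots,\theta^{h-1}(x)$ are acyclic and distinct, and $\theta^h(x)$ lies on the cycle, which is all of cyc$(\mathcal{B})$ and so has length $n$. Therefore $\langle x\rangle$ consists of the $h$ distinct tail elements followed by an $n$-cycle, and has exactly $h+n$ elements. The same description holds for $\langle y\rangle$. The map $\theta^k(x)\mapsto\theta^k(y)$ for $0\le k<h+n$ is then a well-defined bijection. It commutes with $\theta$ because $\theta^{h+n}(x)=\theta^h(x)$ and likewise for $y$.

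The only slightly delicate point is the distinctness of the tail elements from one another and from the cycle. Suppose $\theta^i(x)=\theta^j(x)$ with $i<j$ and $i<h$. Then $\theta^{j-i}$ fixes $\theta^i(x)$, so $\theta^i(x)$ is cyclic, contradicting the minimality of $h$.
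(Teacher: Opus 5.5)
Your proof is correct and takes essentially the paper's route. In both, $\langle x\rangle\cong\mathcal{N}$ when the height is infinite; otherwise $\langle x\rangle$ is the tail $x,\theta(x),\dots,\theta^{h-1}(x)$ followed by cyc$(\mathcal{B})$ (via Lemma \ref{lemma: subalgebra}(ii)), and equal-size cycles are isomorphic. You make explicit what the paper calls ``clear'': the leaf argument that an isomorphism $\langle x\rangle\to\langle y\rangle$ must send $x$ to $y$ unless both are cyclic, and the distinctness of the tail elements.
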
 

\begin{proof}
If ht$(x)$=$\infty$ then cyc$(\mathcal{B})$ is empty and $\langle x \rangle$ is isomorphic to $\mathcal{N}$, to which the result follows. 

Suppose instead that ht$(x)$=$n$ is finite. Then $\theta^n(x)\in$ cyc$(\mathcal{B})$, and so cyc$(\mathcal{B})\subseteq \langle x \rangle$ by Lemma \ref{lemma: subalgebra} (ii). 
Hence $\langle x \rangle = \{x,\theta(x),\dots \theta^{n-1}(x)\}\cup $ cyc$(\mathcal{B})$, and similarly if ht$(y)$=$m$ then $\langle y \rangle = \{y,\theta(y),\dots,\theta^{m-1}(y)\}\cup $ cyc$(\mathcal{B}')$.
The equivalence of (1) and (2) is then clear. 
Cyclic subalgebras of the same size are isomorphic, so the equivalence (2) and (3) holds.  
\end{proof}

\subsection{The relational form} 

\begin{definition} Given a partial monounary algebra $\mathcal{A}=(A;\theta)$, its \textit{relational form} is the digraph  $\mathcal{A}^*=(A;R_{\theta})$  where $(x,y)\in R_{\theta}$ if and only if $x \theta =y$.  
\end{definition} 

Since $\mathcal{A}$ is interdefinable with $\mathcal{A}^*$ we have $\Aut(\mathcal{M}) =\Aut(\mathcal{M}^*)$. 

Note that $\mathcal{A}^*$  is loopless if and only if $\mathcal{A}$ does not contain a 1-cycle. Hence in this case $\mathcal{A}^*$ forms a \textit{directed pseudoforest} (also known as a \textit{functional graph}), that is, a loopless digraph in which  each vertex has at most one outgoing edge. Conversely, each directed pseudoforest is the relational form of some partial monounary algebra.  

\subsection{Isomorphisms between monounary algebras} 

  The results in this subsection will be vital when considering both the ultrahomogeneity and $\omega$-categoricity of monounary algebras,  in particular when extending certain partial maps. The first is a simple extension of \cite[Lemma 1.10]{2-hom}, and shows isomorphisms between monounary algebras must preserve connectivity: 

\begin{lemma}\label{lemma: iso con} Let $\phi\colon \mathcal{A}\rightarrow \mathcal{B}$ be an isomorphism between a pair of monounary algebras $\mathcal{A}$ and $\mathcal{B}$.
 Let $\mathcal{C}\in \mathfrak{C}(\mathcal{A})$ and $\mathcal{D}\in \mathfrak{C}(\mathcal{B})$ be such that there exists $x\in C$ with $\phi(x)\in D$. Then $\phi(\mathcal{C})=\mathcal{D}$. 
\end{lemma}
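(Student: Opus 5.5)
The plan is to show the two inclusions $\phi(C)\subseteq D$ and $D\subseteq \phi(C)$ directly from the definition of connectivity. The only fact about $\phi$ needed is that it commutes with the operations: writing $\theta$ for the operation of $\mathcal{A}$ and $\theta'$ for that of $\mathcal{B}$, induction on $n$ gives $\phi(\theta^n(z))=\theta'^n(\phi(z))$ for all $z\in A$ and $n\in\omega$. The same identity holds for $\phi^{-1}$, which is also an isomorphism of monounary algebras.

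For $\phi(C)\subseteq D$, take any $y\in C$. Since $\mathcal{C}$ is connected, there are $n,m\in\omega$ with $\theta^n(x)=\theta^m(y)$. Applying $\phi$ gives $\theta'^n(\phi(x))=\theta'^m(\phi(y))$. So $\phi(y)$ is connected to $\phi(x)\in D$ in $\mathcal{B}$. Connected components are the classes of the equivalence relation ``$\theta^n(u)=\theta^m(v)$ for some $n,m$'', so $\phi(y)$ lies in $D$.

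For $D\subseteq\phi(C)$, run the same argument with $\phi^{-1}$, the component $\mathcal{D}$, and the element $\phi(x)\in D$, whose image $x$ lies in $C$. This gives $\phi^{-1}(D)\subseteq C$, that is, $D\subseteq\phi(C)$. Hence $\phi(C)=D$.

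Finally, $\phi$ restricted to $C$ commutes with $\theta$ and is a bijection onto $D$, so $\phi(\mathcal{C})=\mathcal{D}$ as subalgebras. The one point needing care is that connectivity is transitive, so that ``same component'' is exactly the relation used above. This holds because if $\theta^a(u)=\theta^b(v)$ and $\theta^c(v)=\theta^d(w)$, then $\theta^{a+c}(u)=\theta^{b+c}(v)=\theta^{b+d}(w)$. Everything else is routine.
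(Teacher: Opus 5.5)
Your argument is correct. Pushing the witness $\theta^n(x)=\theta^m(y)$ through $\phi$ gives $\phi(C)\subseteq D$, and running the same argument for $\phi^{-1}$ gives $D\subseteq\phi(C)$. The one non-trivial point is that components are exactly the classes of the connectivity relation; this needs transitivity, which you check, and closure of each class under $\theta$, which is immediate. The paper gives no proof of its own: it describes the lemma as a simple extension of Lemma 1.10 of the cited work on 2-homogeneous monounary algebras, and your direct verification is the routine argument that remark leaves to the reader.
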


As a consequence, isomorphisms between monounary algebras are simply built from isomorphisms between their connected components: 

\begin{corollary} \label{cor: iso con} Let $\mathcal{A}$ and $\mathcal{B}$ be a pair of monounary algebras.
 If $\pi\colon \mathfrak{C}(\mathcal{A})\rightarrow \mathfrak{C}(\mathcal{B})$ is a  bijection  and $\phi_\mathcal{C}\colon \mathcal{C}\rightarrow \pi(\mathcal{C})$ is a isomorphism   for each $\mathcal{C}\in \mathfrak{C}(\mathcal{A})$, then $\phi=\bigcup_{\mathcal{C}\in \mathfrak{C}(\mathcal{A})} \phi_\mathcal{C}$ is a  isomorphism  from $\mathcal{A}$ to $\mathcal{B}$. Moreover, every  isomorphism  from $\mathcal{A}$ to $\mathcal{B}$ can be constructed in this way. 
\end{corollary}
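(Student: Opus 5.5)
The argument has two halves: first show that the union $\phi$ is an isomorphism, then show, using Lemma \ref{lemma: iso con}, that every isomorphism decomposes in this way.

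\emph{$\phi$ is an isomorphism.} The components of $\mathcal{A}$ partition $A$ and those of $\mathcal{B}$ partition $B$. Hence $\phi=\bigcup_{\mathcal{C}}\phi_\mathcal{C}$ is a well-defined function on $A$: each $x\in A$ lies in exactly one component $\mathcal{C}$, and we set $\phi(x)=\phi_\mathcal{C}(x)$.
\begin{itemize}
\item \emph{Surjectivity.} Each $\phi_\mathcal{C}$ maps $C$ onto $\pi(\mathcal{C})$, and $\pi$ is onto $\mathfrak{C}(\mathcal{B})$, so the images cover $B$.
\item \emph{Injectivity.} Suppose $\phi(x)=\phi(y)$ with $x\in C$ and $y\in C'$. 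The image $\phi(x)$ lies in $\pi(\mathcal{C})\cap\pi(\mathcal{C}')$, which is nonempty. Since distinct components are disjoint, $\pi(\mathcal{C})=\pi(\mathcal{C}')$, so $\mathcal{C}=\mathcal{C}'$ because $\pi$ is injective. Injectivity of $\phi_\mathcal{C}$ then gives $x=y$.
\item \emph{Compatibility with the operations.} For $x\in C$ we have $\theta(x)\in C$, because $\mathcal{C}$ is a subalgebra. Therefore $\phi(\theta(x))=\phi_\mathcal{C}(\theta(x))=\theta'(\phi_\mathcal{C}(x))=\theta'(\phi(x))$.
\end{itemize}

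\emph{Every isomorphism has this form.} Let $\phi\colon\mathcal{A}\to\mathcal{B}$ be an isomorphism. Given $\mathcal{C}\in\mathfrak{C}(\mathcal{A})$, choose $x\in C$ and let $\mathcal{D}$ be the component of $\mathcal{B}$ containing $\phi(x)$. Lemma \ref{lemma: iso con} gives $\phi(\mathcal{C})=\mathcal{D}$. This value does not depend on the choice of $x$, since it equals the image $\phi(C)$. So we may define $\pi(\mathcal{C})=\mathcal{D}$, and we put $\phi_\mathcal{C}=\phi|_C$.
\begin{itemize}
\item \emph{Each $\phi_\mathcal{C}$ is an isomorphism $\mathcal{C}\to\pi(\mathcal{C})$.} It is injective as a restriction of $\phi$, and onto $D$ because $\phi(C)=D$. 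It respects $\theta$ because $\phi$ does.
\item \emph{$\pi$ is a bijection.} Apply the same construction to $\phi^{-1}$, which is again an isomorphism, to obtain a map $\mathfrak{C}(\mathcal{B})\to\mathfrak{C}(\mathcal{A})$. Since $\phi^{-1}(\phi(C))=C$, this map is inverse to $\pi$.
\end{itemize}
Finally, $\phi=\bigcup_{\mathcal{C}}\phi_\mathcal{C}$ because the components cover $A$.

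The argument is routine throughout. The only step needing any care is that $\phi(\mathcal{C})$ is a whole component and that $\pi$ is a bijection. Lemma \ref{lemma: iso con} supplies the first, and applying the lemma to $\phi^{-1}$ supplies the second.
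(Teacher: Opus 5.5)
Your proposal is correct and matches the paper's approach. The paper gives no separate proof and presents this corollary as an immediate consequence of Lemma~\ref{lemma: iso con}, which is exactly what you use for the converse direction (with $\phi^{-1}$ giving bijectivity of $\pi$); the forward direction is the routine check you carry out.
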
 

\begin{notation}\label{not:above set} Given a monounary algebra $\mathcal{A}=(A;\theta)$ and $z\in A$ we let 
\[ A_z =: \{z\}\cup \{x\in A\setminus \text{cyc}(\mathcal{A}):(\exists n\in \mathbb{N})\, \theta^n(x)=z, \theta^{n-1}(x)\notin \text{cyc}(\mathcal{A})\}. 
\] 
\end{notation} 
Then each $\mathcal{A}_z=(A_z;\theta)$ is a partial subalgebra, and if $z$ is acyclic then 
\[ A_z=\bigcup_{k\in \omega} \theta^{-k}(z).\]
Note that $(A\setminus A_z)\cup \{z\}$ is a subalgebra of $\mathcal{A}$. Moreover, if $\mathcal{A}$ is connected with cyc$(\mathcal{A})\neq \emptyset$ then   $\{A_c: c\in \text{cyc}(\mathcal{A})\}$ is a partition of $A$. 

The following lemma allows for isomorphisms between the partial subalgebras $\mathcal{A}_x$  to be inductively built `from above'. The proof is immediate from definitions, and as such we skip it. 

\begin{lemma} \label{lemma:inductive built partials} Let $\mathcal{A}$ be a monounary algebra and  fix some $x,y\in A$. 
Let  $\pi\colon \theta^{-1}(x) \rightarrow \theta^{-1}(y)$ be a bijection and, for each $u\in\theta^{-1}(x)$, let $\phi_u\colon \mathcal{A}_u\rightarrow \mathcal{A}_{\pi(u)}$ be an isomorphism. 
Then the map $\Phi\colon \mathcal{A}_x\rightarrow \mathcal{A}_y$ which maps $x$ to $y$ and extends each $\phi_u$ is an isomorphism, and every isomorphism from $\mathcal{A}_x$ to  $\mathcal{A}_y$ can be constructed in this way. 
\end{lemma}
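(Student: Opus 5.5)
The statement to prove is Lemma \ref{lemma:inductive built partials}. The plan is to check the forward direction straight from the definition of isomorphism of partial monounary algebras, and to get the converse by restricting a given isomorphism. Two preliminary facts about the sets $A_z$ are needed.

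First, for $u\in\theta^{-1}(x)$ with $u\in A_x$, we have $A_x=\{x\}\cup\bigcup_{u\in\theta^{-1}(x)\cap A_x}A_u$, and this union is disjoint. Every $w\in A_x\setminus\{x\}$ satisfies $\theta^n(w)=x$ with $n\geq 1$ and $\theta^{n-1}(w)\notin\mathrm{cyc}(\mathcal{A})$. Hence $u=\theta^{n-1}(w)$ is an acyclic preimage of $x$, and then $w\in A_u$. Disjointness holds because the $A_u$ lie in distinct branches: $\theta^{n-1}$ of an element determines its branch.

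Here $\theta^{-1}(x)$ should be read as the acyclic preimages of $x$ when $x$ is cyclic. This is the reading under which $\mathcal{A}_u$ is defined as a partial subalgebra "above" $x$. If $x$ is acyclic, all preimages are acyclic and no care is needed.

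Second, in the partial algebra $\mathcal{A}_x$ the operation is defined everywhere except at $x$, where $\theta(x)$ may leave $A_x$. The exception is when $\theta(x)$ happens to lie in $A_x$, which forces $x$ to be cyclic with a loop. I would treat that degenerate case as excluded, or note that dom does not contain $x$ unless $\theta(x)\in A_x$.

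For the forward direction, define $\Phi(x)=y$ and $\Phi|_{A_u}=\phi_u$. The map $\Phi$ is a bijection, since $\pi$ is a bijection and the decompositions of $A_x$ and $A_y$ are disjoint. To check that $\Phi$ is a homomorphism, take $w\in\mathrm{dom}$.
\begin{itemize}
\item If $w\in A_u$ and $w\neq u$, then $\theta(w)\in A_u$, and we use that $\phi_u$ is an isomorphism.
\item If $w=u$, then $\theta(u)=x$, and $\Phi(\theta(u))=y=\theta(\pi(u))=\theta(\phi_u(u))$. This uses $\phi_u(u)=\pi(u)$, which holds because $u$ and $\pi(u)$ are the unique elements of $A_u$ and $A_{\pi(u)}$ outside the domain of the restricted operation, and an isomorphism must send such a point to such a point.
\end{itemize}
The same reasoning applied to $\Phi^{-1}$ shows that the inverse respects the domains, so $\Phi$ is an isomorphism.

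For the converse, let $\Psi\colon\mathcal{A}_x\to\mathcal{A}_y$ be an isomorphism. The point $x$ is the unique element outside dom, so $\Psi(x)=y$. Elements $u$ with $\theta(u)=x$ map to elements $v$ with $\theta(v)=y$, so $\Psi$ restricts to a bijection $\pi\colon\theta^{-1}(x)\to\theta^{-1}(y)$. Applying $\Psi$ to the iterates that define $A_u$ shows $\Psi(A_u)=A_{\pi(u)}$. Setting $\phi_u=\Psi|_{A_u}$ then recovers $\Psi$ as the map $\Phi$ built from $\pi$ and the $\phi_u$.

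The main obstacle is bookkeeping rather than mathematics. One has to get the domain convention right: $x$ lies outside the domain of the partial operation, and each $u$ lies outside the domain within its own $\mathcal{A}_u$. One also has to handle the interpretation of $\theta^{-1}(x)$ when $x$ is cyclic. Once those conventions are fixed, each verification is immediate.
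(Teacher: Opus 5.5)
Your proposal is correct. It is exactly the direct verification from the definitions that the paper skips as ``immediate'', including the necessary reading of $\theta^{-1}(x)$ as $\theta^{-1}(x)\cap A_x$ when $x$ is cyclic.

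On fixed points, you should not exclude the case wholesale. When $x$ and $y$ are both fixed points your argument still goes through once you note that $x$ is the unique fixed point in $A_x$, which forces $\Psi(x)=y$; this case is needed later, e.g.\ in Corollary~\ref{cor:heightauto} when $\theta(x)$ lies on a $1$-cycle. In the mixed case the statement is genuinely false (e.g.\ $x$ a fixed point with no other preimage and $y$ a leaf), so that is the case that must be excluded.
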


\begin{lemma}\label{lemma:cycles aut} Let $\mathcal{A}=(A;\theta)$ be a connected monounary algebra with cyc$(\mathcal{A})\neq \emptyset$.
 Let $\pi$ be an automorphism of $\text{cyc}(\mathcal{A})$, and  let $\phi_c:\mathcal{A}_c\rightarrow \mathcal{A}_{\pi(c)}$  be a partial monounary algebra isomorphism for each $c\in \text{cyc}(\mathcal{A})$. 
 Then $\phi=\bigcup_{c\in \text{cyc}(\mathcal{A})} \phi_c$ is an automorphism of $\mathcal{A}$, and conversely every automorphism of $\mathcal{A}$ can be constructed in this way. 
\end{lemma}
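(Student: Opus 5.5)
The proof has two directions: the union $\phi$ is an automorphism, and every automorphism arises this way.

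For the first direction, I first check that $\phi$ is a well-defined bijection of $A$. Since $\mathcal{A}$ is connected and $\text{cyc}(\mathcal{A})\neq\emptyset$, the sets $\{A_c : c\in\text{cyc}(\mathcal{A})\}$ partition $A$, as noted after Notation \ref{not:above set}. Since $\pi$ is a bijection of $\text{cyc}(\mathcal{A})$, the targets $\{A_{\pi(c)}\}$ also partition $A$. Each $\phi_c$ is a bijection $A_c\to A_{\pi(c)}$, so the union $\phi$ is a bijection of $A$.

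Next I verify that $\phi(\theta(x))=\theta(\phi(x))$ for every $x\in A$. Let $x\in A_c$ and split into two cases.

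\emph{Case $x\neq c$.} Then $x$ is acyclic. Writing $\theta^n(x)=c$ with $\theta^{n-1}(x)$ acyclic, either $n\geq 2$, or $n=1$ and $\theta(x)=c$. In both cases $\theta(x)\in A_c$, so $x\in\text{dom}(\theta_{A_c})$. Since $\phi_c$ is a partial isomorphism, $\phi_c(x)\in\text{dom}(\theta_{A_{\pi(c)}})$ and $\phi_c(\theta(x))=\theta(\phi_c(x))$, as required.

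\emph{Case $x=c$.} Here $c$ is the unique cyclic element of $A_c$, and $\theta(c)$ is cyclic and different from $c$ unless the cycle has length one. So $c$ need not lie in the domain of the partial operation on $A_c$, and the identity must come from $\pi$. I need $\phi_c(c)=\pi(c)$. This holds because $\phi_c$ is an isomorphism onto $A_{\pi(c)}$ whose only cyclic element is $\pi(c)$, and $c$ is characterised intrinsically in $\mathcal{A}_c$. Indeed, $c$ is the unique element $y$ of $A_c$ with $\theta^k(y)=c$ for all $y$ and some $k$: the partial map restricted to $A_c\setminus\{c\}$ eventually lands at $c$, and $c$ is the only element not in $\text{dom}(\theta_{A_c})$, except in the 1-cycle case where $\theta(c)=c$. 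An isomorphism preserves this description. Then $\phi(\theta(c))=\pi(\theta(c))=\theta(\pi(c))=\theta(\phi(c))$, using that $\pi$ is an automorphism of $\text{cyc}(\mathcal{A})$. The main obstacle is this small point: checking that a partial isomorphism of the $\mathcal{A}_c$'s must send the root to the root. I would argue it via "the unique element which is the terminal point of all $\theta$-chains in $\mathcal{A}_c$", treating the 1-cycle case separately.

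For the converse, let $\alpha\in\Aut(\mathcal{A})$. Automorphisms preserve cyclicity, since $\theta^n(x)=x$ if and only if $\theta^n(\alpha x)=\alpha x$, so $\pi:=\alpha|_{\text{cyc}(\mathcal{A})}$ is an automorphism of $\text{cyc}(\mathcal{A})$. Automorphisms also preserve the defining condition of $A_c$, namely $\theta^n(x)=c$ with $\theta^{n-1}(x)$ acyclic. Hence $\alpha(A_c)=A_{\alpha(c)}$, and $\phi_c:=\alpha|_{A_c}$ is a partial isomorphism $\mathcal{A}_c\to\mathcal{A}_{\pi(c)}$, because domains of the partial operations are preserved when $\alpha$ maps $A_c$ onto $A_{\pi(c)}$. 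Clearly $\alpha=\bigcup_c\phi_c$.
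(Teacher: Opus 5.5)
Your proof is correct and follows essentially the same route as the paper: the same case split at the root $c$ (the only element of $A_c$ whose image can leave $A_c$), the same use of $\pi$ at that point, and the same restriction argument for the converse. What you add is an explicit justification that $\phi_c(c)=\pi(c)$, namely that $c$ is the unique element of $A_c$ outside $\mathrm{dom}(\theta_{A_c})$, or its unique fixed point when $\theta(c)=c$; the paper uses this silently in this proof and only argues it later, in Proposition~\ref{prop:iso cut}.
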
 

\begin{proof} Let $\mathcal{C}= \text{cyc}(\mathcal{A})$.

Let $\pi$, $\phi_c$ $(c\in \mathcal{C})$, and $\phi$ be constructed as in our hypothesis.
 Clearly $\phi$ is a bijection, so it suffices to  prove that it forms a homomorphism. 
 Let $x\in A$, say $x\in {A}_c$.
  If $\theta(x)\in {A}_c$ then as $\phi_c$ is an isomorphism we have that $\phi_c(x)$ and $\theta(\phi_c(x))$ are elements of  ${A}_{\pi(c)}$  and 
\[ \phi(\theta(x))=\phi_c(\theta(x))=\theta(\phi_c(x))=\theta(\phi(x)).
\] 
If $\theta(x)\notin A_c$ then $x=c$ and so $\theta(x)$ is cyclic. Hence as $\pi$ is an automorphism of $\mathcal{C}$  we have 
\[ \phi(\theta(x))=\phi_{\theta(x)}(\theta(x)) = \pi(\theta(x))=\theta(\pi(x)) = \theta(\phi_x(x))=\theta(\phi(x)).
 \] 
Conversely, if $\psi$ is an automorphism of $\mathcal{A}$, then $\pi'=\psi|_{\text{cyc}(\mathcal{A})}$  is  an automorphism of $\mathcal{C}$, and $\psi|_{\mathcal{A}_c}$ is an isomorphism from $\mathcal{A}_c$ to $\mathcal{A}_{\pi'(c)}$ for each $c\in \mathcal{C}$, to which the result follows.  
\end{proof}

Notice that for any $z\in A$ the digraph $\mathcal{A}^*_z$ is a tree. 
Automorphisms of tree-like structures have been well-studied, often from the viewpoint of ultrahomogeneity (see, e.g., \cite{Droste,Hamann}).
 The following proposition is   motivated by a method used by Droste in \cite[Theorem 6.16]{Droste} when considering tree-like partially ordered sets. 
 
\begin{proposition}\label{prop:iso cut} Let $\mathcal{A}=(A;\theta)$ be a  monounary algebra and fix some $x,y\in A$.
 Suppose there exists an isomorphism $\psi$  from the subalgebra $(A\setminus A_x)\cup \{x\}$ to $(A\setminus A_y)\cup \{y\}$ which maps $x$ to $y$. 
 Then for any isomorphism $\phi\colon\mathcal{A}_x\rightarrow \mathcal{A}_y$, the map $\Psi\colon A\rightarrow A$ given by 
\[   
\Psi(a) = 
     \begin{cases}
      \phi(a) & \quad \text{if }  a\in  A_x,\\
       \psi(a) &\quad\text{otherwise } \\
     \end{cases}
\]
is an automorphism of $\mathcal{A}$. 
\end{proposition}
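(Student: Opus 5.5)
The plan is to check directly that $\Psi$ is a bijection commuting with $\theta$. The domain splits as the disjoint union $A = A_x \cup (A\setminus A_x)$, and on this decomposition $\Psi$ is $\phi$ on the first part and $\psi$ on the second. The two maps agree at the single overlap point of their domains, since $\phi(x)=y=\psi(x)$; this holds because $x$ is the root of $\mathcal{A}_x$ and any isomorphism $\mathcal{A}_x\to\mathcal{A}_y$ sends root to root, by Lemma \ref{lemma:inductive built partials}. So we may equally write $\Psi = \phi\cup\psi$, with $\psi$ defined on the subalgebra $(A\setminus A_x)\cup\{x\}$.

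\textbf{Bijectivity.} The map $\phi$ is a bijection $A_x\to A_y$, and $\psi$ restricts to a bijection from $A\setminus A_x$ onto $A\setminus A_y$, because $\psi$ is a bijection of $(A\setminus A_x)\cup\{x\}$ onto $(A\setminus A_y)\cup\{y\}$ sending $x$ to $y$. The images $A_y$ and $A\setminus A_y$ partition $A$, so $\Psi$ is a bijection.

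\textbf{Homomorphism.} Let $a\in A$. There are three cases.
\begin{enumerate}
\item[(a)] $a\in A_x$ and $a\neq x$. Then $a\in\text{dom}(\theta_{A_x})$: for $x$ acyclic, $A_x=\bigcup_k\theta^{-k}(x)$ is closed under $\theta$ away from $x$, and the cyclic case follows directly from Notation \ref{not:above set}. Since $\phi$ is a partial isomorphism, $\phi(\theta(a))=\theta(\phi(a))$.
\item[(b)] $a=x$. Then $\theta(x)$ lies in the subalgebra $(A\setminus A_x)\cup\{x\}$, which contains $x$ and is closed under $\theta$. We get $\Psi(\theta(x))=\psi(\theta(x))=\theta(\psi(x))=\theta(y)=\theta(\Psi(x))$. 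One subtlety is when $\theta(x)\in A_x$, i.e.\ $\theta(x)=x$ is a 1-cycle; then both $\phi$ and $\psi$ give the value $y$, so the computation is consistent.
\item[(c)] $a\notin A_x$. Then $a$ and $\theta(a)$ both lie in $(A\setminus A_x)\cup\{x\}$, and $\psi$ commutes with $\theta$ there. If $\theta(a)=x$ we still get the value $\psi(x)=y=\phi(x)$, so $\Psi(\theta(a))=\theta(\Psi(a))$.
\end{enumerate}

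A bijective endomorphism of a monounary algebra is an automorphism, so $\Psi\in\Aut(\mathcal{A})$.

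The main obstacle is case (a) for cyclic $x$. We must confirm that $\theta$ maps $A_x\setminus\{x\}$ into $A_x$, never escaping to a cyclic element other than $x$. This follows from the defining condition $\theta^{n-1}(a)\notin\text{cyc}(\mathcal{A})$ with $\theta^n(a)=x$, which forces $\theta(a)$ either to equal $x$ or to satisfy the same condition with $n-1$. The rest of the argument is bookkeeping.
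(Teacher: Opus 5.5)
Your proof is correct and takes essentially the same route as the paper. You check that $\phi(x)=y=\psi(x)$, get bijectivity from the partitions $A=A_x\cup(A\setminus A_x)=A_y\cup(A\setminus A_y)$, and verify $\Psi\circ\theta=\theta\circ\Psi$ by a case split on where $a$ and $\theta(a)$ lie. Your version is slightly more careful: you verify that $\theta$ maps $A_x\setminus\{x\}$ into $A_x$, and your case (c) covers $a\notin A_x$ with $\theta(a)=x$ (possible when $x$ is cyclic), both of which the paper's case analysis uses or skips without comment.
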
 

\begin{proof} Since $x$ is  the unique element of $\mathcal{A}_x$ which is either a 1-cycle or has no image under $\theta$, it follows that the isomorphism $\phi$  must map $x$ to $y$. Hence $\Psi$ is a well-defined bijection.

 If $a\in A_x$ and $\theta(a)\notin A_x$ then $a=x$ and we have 
\[ \Psi(\theta(x)) = \psi(\theta(x)) = \theta(\psi(x))=\theta(\Psi(x)).
\] 
If $a,\theta(a)\in A_x$ or $a,\theta(a)\in A\setminus A_x$ then the homomorphism property is immediate from that of $\phi$ and $\psi$, respectively. Hence $\Psi$ is an automorphism of $\mathcal{A}$. 
\end{proof} 
 
 We denote the map $\Psi$ by $\phi \sqcup_x \psi$.  
 
\begin{corollary}\label{cor:heightauto}  Let $\mathcal{A}=(A;\theta)$ be a   monounary algebra.
Let $x,y\in A$ be of height $k\in \omega$ and with $\mathcal{A}_{\theta^{m}(x)}\cong \mathcal{A}_{\theta^{m}(y)}$ for each $0\leq m \leq k$. 
Then there exists an automorphism of $\mathcal{A}$ which maps $x$ to $y$ if and only if  there exists an automorphism of $\mathcal{A}$ mapping  $\theta^k(x)$ to $\theta^k(y)$. 
\end{corollary}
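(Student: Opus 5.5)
The forward direction is immediate. If $\alpha\in\Aut(\mathcal{A})$ maps $x$ to $y$, then $\alpha(\theta^k(x))=\theta^k(\alpha(x))=\theta^k(y)$, so $\alpha$ itself works. The content is the converse, which I would prove by induction on $k$, building the automorphism one level at a time using Proposition \ref{prop:iso cut}.

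\emph{Base and reduction.} For $k=0$ there is nothing to prove, since $\theta^0(x)=x$. For $k\geq 1$, write $x'=\theta(x)$ and $y'=\theta(y)$. These have height $k-1$, and the hypothesis $\mathcal{A}_{\theta^m(x')}\cong\mathcal{A}_{\theta^m(y')}$ holds for $0\leq m\leq k-1$ because it is the original hypothesis for $1\leq m\leq k$. Since $\theta^{k-1}(x')=\theta^k(x)$, the induction hypothesis gives an automorphism $\alpha$ with $\alpha(x')=y'$. So it suffices to prove the following one-step claim: if $x,y$ are acyclic, $\mathcal{A}_x\cong\mathcal{A}_y$, and some automorphism $\alpha$ maps $\theta(x)$ to $\theta(y)$, then some automorphism maps $x$ to $y$.

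\emph{One step.} Put $u=\alpha(x)$. Then $\theta(u)=\theta(y)$, and $\alpha$ restricts to an isomorphism $\mathcal{A}_x\to\mathcal{A}_u$, so $\mathcal{A}_u\cong\mathcal{A}_x\cong\mathcal{A}_y$. If $u=y$ we are done. Otherwise $u$ and $y$ are distinct acyclic elements of $\theta^{-1}(\theta(y))$. Since $u,y$ are acyclic, we have $A_u=\bigcup_k\theta^{-k}(u)$ and similarly for $y$, so $A_u\cap A_y=\emptyset$. Fix an isomorphism $\phi\colon\mathcal{A}_u\to\mathcal{A}_y$ and define $\beta$ to be $\phi$ on $A_u$, $\phi^{-1}$ on $A_y$, and the identity elsewhere. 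Then $\beta$ is a bijection, and it is an automorphism. Inside $A_u$ and $A_y$ it respects $\theta$ because $\phi$ does. The only edges leaving these sets are $u\mapsto\theta(u)$ and $y\mapsto\theta(y)=\theta(u)$, and $\theta(u)$ is fixed by $\beta$. The element $\theta(u)$ lies outside $A_u\cup A_y$, since $A_u$ consists of $u$ and its preimages, and an element of $A_u$ equal to $\theta(u)$ would force a cycle through $u$, contradicting acyclicity.

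Then $\beta\circ\alpha$ maps $x$ to $y$. Alternatively, the swap $\beta$ could be phrased through Proposition \ref{prop:iso cut}, taking $\psi$ to be the identity on $(A\setminus A_y)\cup\{y\}$ after conjugating, but the direct check above is simpler.

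\emph{Main obstacle.} The step needing care is the acyclicity bookkeeping. When $k\geq1$ the elements $x,y$ are acyclic, so $A_x$ and $A_y$ are the full preimage trees and are disjoint from each other and from $\theta(y)$. I expect this disjointness, together with verifying the homomorphism condition at the two boundary edges, to be the main point to check.
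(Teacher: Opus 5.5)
Your proof is correct. It shares the paper's skeleton: induction on $k$, with the inductive hypothesis applied to $\theta(x),\theta(y)$ to get an automorphism $\alpha$ sending $\theta(x)$ to $\theta(y)$, followed by a correction at the top level. Only the correction step differs.

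The paper builds a new isomorphism $\mathcal{A}_{\theta(x)}\to\mathcal{A}_{\theta(y)}$ from a bijection of preimage sets chosen so that $x\mapsto y$ (Lemma \ref{lemma:inductive built partials}). It then glues this to $\alpha$ outside $A_{\theta(x)}$ via Proposition \ref{prop:iso cut}. You instead post-compose $\alpha$ with an explicit involution that exchanges the sibling subtrees $A_u$ and $A_y$ below $\theta(y)$ and fixes everything else. Your $\beta\circ\alpha$ agrees with $\alpha$ off $A_{\theta(x)}$, so it is in fact one of the paper's glued maps. Your route, however, is more elementary and needs neither auxiliary result. It also treats $k=1$ cleanly. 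There $\theta(x)$ is cyclic, so $\theta^{-1}(\theta(x))$ contains a cyclic element outside $A_{\theta(x)}$, and the paper's appeal to Lemma \ref{lemma:inductive built partials} must tacitly be restricted to acyclic preimages. Your swap only needs $u$ and $y$ to be acyclic. It also makes explicit the adjustment behind the paper's ``we may take $\pi$ such that $\pi(x)=y$''. The paper's gluing method, in exchange, is the general tool it reuses in Corollary \ref{cor:orbit induct}, where several branches must be controlled at once.

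In a write-up, state two facts explicitly. First, any isomorphism $\mathcal{A}_u\to\mathcal{A}_y$ sends $u$ to $y$. Second, $A_u$ and $A_y$ are closed under preimages, so no edge enters them from outside and the identity part respects $\theta$. Your closing aside is off as phrased: one would cut at $\theta(y)$, taking $\psi$ to be the identity on $(A\setminus A_{\theta(y)})\cup\{\theta(y)\}$. Nothing in your proof depends on it.
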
 

\begin{proof} 
 $(\Rightarrow)$ Since any automorphism of $\mathcal{A}$ which maps $x$ to $y$ preserves $\theta$, it must also map $\theta^k(x)$ to $\theta^k(y)$. 

$(\Leftarrow)$ The result is trivial if $x$ and $y$ have height 0. We may proceed by induction by supposing that the result is true for pairs of elements of height at most $k-1$. 
 Let $x,y\in A$ be of height $k$, say $\theta^k(x)=c$ and $\theta^k(y)=d$, and satisfy the hypothesis of the corollary. 
 Suppose there exists an automorphism mapping $\theta^{k}(x)$ to $\theta^k(y)$. 
 Then $\theta(x)$ and $\theta(y)$ are of height $k-1$ and also satisfies the hypothesis of the corollary,  so in particular 
 $\mathcal{A}_{\theta(x)}\cong \mathcal{A}_{\theta(y)}$. 
  Hence by the inductive hypothesis there exists an automorphism $\psi$ of $\mathcal{A}$ mapping $\theta(x)$ to $\theta(y)$. 
 By Lemma \ref{lemma:inductive built partials}  every isomorphism from $\mathcal{A}_{\theta(x)}$ to  $\mathcal{A}_{\theta(y)}$    is built from a bijection $\pi\colon \theta^{-1}(\theta(x)) \rightarrow \theta^{-1}(\theta(y))$  such that $\mathcal{A}_a\cong \mathcal{A}_{\pi(a)}$ for each $a\in \theta^{-1}(\theta(x))$. 
Since $\mathcal{A}_x \cong \mathcal{A}_y$ we may   take $\pi$ such that $\pi(x)=y$.  
 Let $\phi_a\colon\mathcal{A}_a\rightarrow \mathcal{A}_{\pi(a)}$ be an isomorphism for each $a\in \theta^{-1}(\theta(x))$, and let $\phi\colon \mathcal{A}_{\theta(x)}\rightarrow \mathcal{A}_{\theta(y)}$ be the map in which $\phi(\theta(x))=\theta(y)$ and $\phi|_{\mathcal{A}_a}=\phi_a$ for each $a\in \theta^{-1}(\theta(x))$.
  Then   $\phi$ is an isomorphism by Lemma \ref{lemma:inductive built partials}, and hence $\Psi=\phi \sqcup_{\theta(x)} \psi|_{B}$ is an automorphism of $\mathcal{A}$ by Proposition \ref{prop:iso cut}, where $B=(A\setminus A_{\theta(x)})\cup \{\theta(x)\}$.
   Moreover, $\Psi$ extends $\pi$ by construction  and hence maps $x$ to $y$. This completes the inductive step.  
\end{proof}
\begin{corollary}\label{cor:orbit induct} Let $\mathcal{A}$ be a   monounary algebra and let $\phi\colon \langle x_1,\dots,x_n\rangle \rightarrow \langle y_1,\dots,y_n \rangle$ be an isomorphism between finitely generated substructures of $\mathcal{A}$ defined by $\phi(x_i)=y_i$ for each $1\leq i \leq n$.
Suppose also that $(x_1,\dots,x_{n-1})$ and $(y_1,\dots,y_{n-1})$ are in the same $(n-1)$-orbit, and   $x_n$ and $y_n$ are in the same 1-orbit of $\mathcal{A}$. 
Then $(x_1,\dots,x_n)$ and $(y_1,\dots,y_n)$ are in the same $n$-orbit of $\mathcal{A}$, that is, $\phi$ can be extended to an automorphism of $\mathcal{A}$. 
\end{corollary}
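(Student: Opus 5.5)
The plan is to first reduce to the case where $\phi$ fixes $x_1,\dots,x_{n-1}$, and then build an automorphism that moves $x_n$ to $y_n$ while fixing the subalgebra $B=\langle x_1,\dots,x_{n-1}\rangle$ pointwise. Such an automorphism agrees with $\phi$ on every generator, so it agrees with $\phi$ on $\langle x_1,\dots,x_n\rangle$.

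\emph{Reduction.} Let $\alpha\in\Aut(\mathcal{A})$ satisfy $\alpha(x_i)=y_i$ for $i<n$. Then $\alpha^{-1}\circ\phi$ is an isomorphism from $\langle x_1,\dots,x_n\rangle$ onto $\langle x_1,\dots,x_{n-1},\alpha^{-1}(y_n)\rangle$ which fixes each $x_i$ with $i<n$. Moreover $\alpha^{-1}(y_n)$ lies in the same $1$-orbit as $x_n$. If we find $\gamma\in\Aut(\mathcal{A})$ extending $\alpha^{-1}\circ\phi$, then $\alpha\gamma$ extends $\phi$. So we may assume $y_i=x_i$ for $i<n$, hence $\phi|_B=\mathrm{id}_B$. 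We also fix $\beta\in\Aut(\mathcal{A})$ with $\beta(x_n)=y_n$.

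If $x_n\in B$, then $y_n=\phi(x_n)=x_n$ and the identity works. Otherwise, let $k$ be the least integer with $\theta^k(x_n)\in B$, if such a $k$ exists. Since $\phi$ is a bijection fixing $B$ and $\phi(\theta^j(x_n))=\theta^j(y_n)$, we get $\theta^j(y_n)\in B$ if and only if $\theta^j(x_n)\in B$. Hence the same $k$ works for $y_n$, and $\theta^k(y_n)=\theta^k(x_n)$.

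\emph{Case 1: no such $k$.} Let $C,D$ be the components of $x_n,y_n$. We claim $C\cap B=\emptyset$. If $b\in C\cap B$, connectivity gives $\theta^p(x_n)=\theta^q(b)\in B$ for some $p,q$, a contradiction. The same argument applied to $y_n$ gives $D\cap B=\emptyset$. By Lemma \ref{lemma: iso con}, $\beta(C)=D$. If $C=D$, take $\beta$ on $C$ and the identity elsewhere. If $C\neq D$, take $\beta$ on $C$, $\beta^{-1}$ on $D$, and the identity elsewhere. In either case Corollary \ref{cor: iso con} gives an automorphism, and it fixes $B$ and sends $x_n\mapsto y_n$.

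\emph{Case 2: $k\ge1$.} Put $z=\theta^k(x_n)\in B$, $u=\theta^{k-1}(x_n)$ and $v=\theta^{k-1}(y_n)$, so $u,v\notin B$ and $\theta(u)=\theta(v)=z$.

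\begin{itemize}
\item First, $u$ and $v$ are acyclic. Otherwise, by Lemma \ref{lemma: subalgebra}(ii) applied to the component containing $z$, they would lie in $B$. Hence $A_u=\bigcup_j\theta^{-j}(u)$ and likewise for $A_v$.
\item Next, $\beta(u)=v$, and since $\beta$ commutes with $\theta$, $\beta$ maps $A_u$ onto $A_v$.
\item Also $A_u\cap B=\emptyset$, since any element of $B$ in $A_u$ would force $u\in B$; similarly $A_v\cap B=\emptyset$.
\item Finally, if $u\neq v$ then $A_u\cap A_v=\emptyset$. Indeed, $\theta^a(w)=u$ and $\theta^b(w)=v$ together with $\theta(u)=\theta(v)=z$ and acyclicity force $a=b$, hence $u=v$.
\end{itemize}

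Define $\Psi$ to be $\beta$ on $A_u$, $\beta^{-1}$ on $A_v$ (if $u\ne v$), and the identity elsewhere. The only edges leaving $A_u\cup A_v$ are $u\mapsto z$ and $v\mapsto z$, and these are preserved since $\Psi(z)=z$. So $\Psi$ is an automorphism; this is essentially Proposition \ref{prop:iso cut} applied twice. It fixes $B$ and sends $x_n\mapsto y_n$, which completes the proof.

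The main obstacle is the bookkeeping in Case 2: checking that $A_u$ and $A_v$ miss $B$ and are disjoint, so that the swap is well defined and fixes $B$. Everything else is routine.
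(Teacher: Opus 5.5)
Your proof is correct, but it is organised differently from the paper's.

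\textbf{The paper's route.} The paper never normalises. It keeps both $\psi$ (with $\psi(x_i)=y_i$ for $i<n$) and $\psi'$ (with $\psi'(x_n)=y_n$) and glues them together.
\begin{itemize}
\item In the disconnected case it chooses an isomorphism-type-preserving permutation of $\mathfrak{C}(\mathcal{A})$. This permutation agrees with $\psi$ on the components meeting $B$ and with $\psi'$ on the component of $x_n$.
\item In the connected case it builds a new isomorphism $\Phi\colon\mathcal{A}_z\to\mathcal{A}_{z'}$ via Lemma \ref{lemma:inductive built partials}. It uses $\psi$ on the branches through $B$, $\psi'$ on the branch through $x_n$, and an arbitrary type-preserving matching on all remaining preimages of $z$. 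It then pastes $\Phi$ onto $\psi$ across $z$ using Proposition \ref{prop:iso cut}.
\end{itemize}

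\textbf{What your route buys.} Your composition with $\alpha^{-1}$ reduces to the pointwise stabiliser of $B$, which forces $z'=z$. So you never have to match every branch at $z$. You only exchange the two branches $A_u$ and $A_v$ (or the two components $C$ and $D$) using $\beta$ and $\beta^{-1}$, and fix everything else. All the checks are local: $A_u$ and $A_v$ are closed under preimages, disjoint from $B$, and disjoint from each other. In particular:
\begin{itemize}
\item You never need to argue that the leftover preimages of $z$ and $z'$ can be paired up by branch type, which the paper leaves implicit.
\item A cyclic $z$ causes no trouble, because you first show that $u$ and $v$ are acyclic, so $A_u$ and $A_v$ are the full preimage trees.
\end{itemize}
The paper's version, in exchange, reuses exactly the cut-and-paste machinery it already set up for Corollary \ref{cor:heightauto}.

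\textbf{Minor remark.} The step $A_u\cap A_v=\emptyset$ can be stated more directly. If $\theta^a(w)=u$ and $\theta^b(w)=v$ with $a<b$, then $v=\theta^{b-a-1}(z)\in B$, contradicting $v\notin B$; the case $a>b$ is symmetric. This avoids the detour through acyclicity.
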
  

\begin{proof}
Let $\psi,\psi'\in \Aut(\mathcal{A})$  be such that $\psi(x_i)=y_i$ for each $1\leq i \leq n-1$ and $\psi'(x_n)=y_n$. 
Let $\mathcal{B}=\langle x_1,\dots,x_{n-1}\rangle$ and $\mathcal{B}'=\langle y_1,\dots,y_{n-1}\rangle$.
Let $\mathfrak{C}(\mathcal{A})=\{\mathcal{C}_i:i\in I\}$ with $x_k\in \mathcal{C}_{i_k}$ and $y_k \in \mathcal{C}_{j_k}$. Since $\phi$ is an isomorphism, and thus preserves connectivity, the   map $\pi \colon \{i_1, \dots , i_n\} \rightarrow \{j_1, \dots , j_n\}$ defined by $\pi(i_k) = j_k$ for each $1\leq k\leq n$ is a bijection. Moreover, as $\psi, \psi'\in \Aut(\mathcal{A})$, we have 
 $\psi(\mathcal{C}_{i_k}) = \mathcal{C}_{j_k}$   for each $k<n$ and  $\psi'(\mathcal{C}_{i_n})= \mathcal{C}_{j_n}$, so that $\pi$ preserves isomorphism types of the connected components.

Suppose first that $x_n$ is in a different connected component as the elements of $B$, i.e. $\langle x_n \rangle \cap B = \emptyset$.
Letting $J=\{j \in I \colon \mathcal{C}_j \cong \mathcal{C}_{i_n}\}$, extend $\pi|_J$ to a bijection $\pi'$ of $J$. For each $j\in J$ let $\Psi_j\colon \mathcal{C}_j\rightarrow \mathcal{C}_{j\pi'}$ be an isomorphism, where $\Psi_j = \psi|_{\mathcal{C}_j}$ if $j\in \{i_1, \dots , i_{n-1}\}$ and $\Psi_{i_n} = \psi'|_{\mathcal{C}_{i_n}}$. The bijection of $A$ formed as the union of the $\Psi_j$ ($j\in J)$ and the $\psi|_{\mathcal{C}_{i}}$ ($i \in I \setminus J$) is a desired automorphism of $\mathcal{A}$.

Assume instead that there exists a minimum $k\in \omega$     such that $\theta^k(x_n)=z\in B$. 
Then, as $\phi$ is an isomorphism, $k$ is also minimum such that $\theta^k(y_n)=z'\in B'$, noting also that $z'=\phi(z)$. 
If $k=0$, so that $x_n\in B$,  then $\psi$ provides the required automorphism; assume instead that $k>0$ so that $x_n$ is a leaf of $\langle x_1,\dots,x_n\rangle$ (similarly for $y_n$). 
Since $z=\theta^k(x_n)=\theta^p(x_i)$ for some $i\in \{1,\dots,n-1\}$ and $p\in\omega$, we have 
$$z' = \phi(z) = \phi(\theta^p(x_i)) = \theta^p(\phi(x_i)) = \theta^p(y_i)=\theta^p(\psi(x_i))= \psi(\theta^p(x_i))=\psi(z)$$
and similarly $z'=\psi'(z)$.
Hence,  $\mathcal{A}_z$ and $\mathcal{A}_{z'}$ are isomorphic, and by Lemma \ref{lemma:inductive built partials} any isomorphism between them is built via a bijection $\pi\colon \theta^{-1}(z)\rightarrow \theta^{-1}(z')$ and isomorphisms $\Phi_u\colon \mathcal{A}_u\rightarrow \mathcal{A}_{\pi(u)}$ for  each $u\in\theta^{-1}(z)$. 
We choose $\pi$ so that $\pi(\theta^{k-1}(x_n))= \theta^{k-1}(y_n)$,  $\pi(u)=\psi(u)$ for each $u\in B\cap \theta^{-1}(z)$, and let $\pi(u)$ be chosen arbitrarily  subject to $\mathcal{A}_u\cong \mathcal{A}_{\pi(u)}$ for all other $u\in\theta^{-1}(z)$. 
We may then take isomorphisms $\Phi_{\theta^{k-1}(x_n)}=\psi'|_{\mathcal{A}_{\theta^{k-1}(x_n)}}$,  $\Phi_u=\psi|_{\mathcal{A}_u}$ for $u\in B\cap \theta^{-1}(z)$, and fix any isomorphism $\Phi_u$ for all other $u\in\theta^{-1}(z)$. 
Letting $\Phi\colon \mathcal{A}_z\rightarrow \mathcal{A}_{z'}$ be the resulting isomorphism, we form the automorphism $\Psi=  \Phi \sqcup_{z} \psi|_{(\mathcal{A}\setminus \mathcal{A}_z)\cup\{z\}}$ of $\mathcal{A}$.
 Then by construction $\Psi|_B=\psi|_B$, and in particular $\Psi(x_i)=y_i$ for $1\leq i \leq n-1$.
 Moreover, $\Psi(x_n)=\Phi(x_n)=\Phi_{\theta^{k-1}(x_n)}(x_n) = y_n$ as required.  
\end{proof}

\section{Ultrahomogeneous monounary algebras} 

  Several alternative notions of ultrahomogeneity have been investigated for monounary algebras, and we describe several here. 
   A structure $\mathcal{M}$ is called \textit{$n$-homogeneous}     for some $n\in \mathbb{N}$ if every isomorphism between substructures of $\mathcal{M}$ of size $n$ extends to an automorphism of $\mathcal{M}$.
 For locally finite structures,  being ultrahomogeneous is equivalent to being $n$-homogeneous for each $n$.
     For arbitrary structures this needs not hold, since structures without finite substructures are trivially $n$-homogeneous for every $n$, but need not be ultrahomogeneous.
      For example, $\mathcal{N}=(\mathbb{N};\suc)$ has no finite substructures and is not ultrahomogeneous,  since the isomorphism mapping $\langle 2 \rangle=(\{2,3,\dots\};\suc)$ to $\langle 1 \rangle=\mathcal{N}$ cannot be extended to an automorphism of $\mathcal{N}$.  
     
  The   $n$-homogeneity of monounary algebras is classified in \cite{1hom} for $n=1$ and in \cite{2-hom} for $n=2$.
 This property is strengthened in  \cite{1hom}   and \cite{partial2hom}, where isomorphisms  are taken between \textit{partial} subalgebras  of some fixed size $n$ (called here ``partially-$n$-homogeneous''); we discuss this in more detail in the subsequent subsection. 
      Transitive monounary algebras were classified in \cite{monohom} (called here ``homogeneous''). 
For each cardinal $\alpha>0$ there exists a unique, up to isomorphism, transitive connected monounary algebra with $|\theta^{-1}(x)|=\alpha$ for every element $x$. We follow their notation by denoting the resulting monounary algebra by $\mathcal{B}_{\alpha}$, and refer to Notation 2.2 of their paper for a construction. 

Finally, \textit{homomorphism-homogeneous} and \textit{polymorphism-homogeneous}   monounary algebras are described in \cite{monohomhom} and \cite{polyhom}, respectively. 
      While ultrahomogeneity is a property on the group of automorphisms, these are the corresponding properties on the  endomorphism monoid and polymorphism clone, respectively. 
 
Despite these breakthroughs, the ultrahomogeneity of monounary algebras has largely been unstudied. 
One exception is \cite{Sokic}, which considers Ramsey properties of unary algebras  and gives a countable list of Fra\"iss\'e classes of monounary algebras;
 the class of all finite monounary algebras $\mathfrak{F}$ and, for each $k\geq 1$, the class $\mathfrak{F}_k$ of all finite monounary algebras $(A;\theta)$ in which  $|\theta^{-1}(a)|\leq k$ for each $a\in A$. 
In the first half of Problem 1 of their paper, they ask for all Fra\"iss\'e subclasses of $\mathfrak{F}$ or, alternatively,  all countable locally finite ultrahomogeneous monounary algebras. 

In this section we answer their problem in greater generality by  describing all ultrahomogeneous monounary algebras of arbitrary cardinality.
Our first main result is the following simple consequence of Corollary \ref{cor:orbit induct}, which shows that   1-ultrahomogeneity is sufficient for ultrahomogeneity. 
This result has precedent; in \cite{monohomhom}, the proofs of Lemma 2 and the forward direction to Lemma 4 requires only that any homomorphism  between 1-generated subalgebras extends to an endomorphism. 
It is then a simple exercise to show that every \textit{1-homomorphism-homogeneous} monounary algebra is homomorphism-homogeneous. 

\begin{proposition} \label{prop:early hom iff 1hom} 
A monounary algebra  is ultrahomogeneous if and only if it is 1-ultrahomogeneous. 
\end{proposition}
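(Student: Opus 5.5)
The forward direction is immediate, since ultrahomogeneity means $n$-ultrahomogeneity for every $n$, in particular for $n=1$. For the converse, assume $\mathcal{A}$ is 1-ultrahomogeneous. We prove by induction on $n$ that every isomorphism between $n$-generated subalgebras of $\mathcal{A}$ extends to an automorphism. The case $n=1$ is the hypothesis.

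For the inductive step, let $\phi\colon\langle x_1,\dots,x_n\rangle\to\langle y_1,\dots,y_n\rangle$ be an isomorphism. Since an isomorphism of subalgebras is determined by its values on generators, we may write $y_i=\phi(x_i)$ for each $i$. We want to place ourselves in the hypotheses of Corollary \ref{cor:orbit induct}.

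First, the restriction of $\phi$ to $\langle x_1,\dots,x_{n-1}\rangle$ is a homomorphism commuting with $\theta$. It therefore maps this subalgebra onto $\langle y_1,\dots,y_{n-1}\rangle$, and is injective as a restriction of a bijection, so it is an isomorphism between $(n-1)$-generated subalgebras. By the inductive hypothesis it extends to an automorphism. Hence $(x_1,\dots,x_{n-1})$ and $(y_1,\dots,y_{n-1})$ lie in the same $(n-1)$-orbit.

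Second, in the same way $\phi|_{\langle x_n\rangle}$ is an isomorphism $\langle x_n\rangle\to\langle y_n\rangle$. By 1-ultrahomogeneity it extends to an automorphism, so $x_n$ and $y_n$ lie in the same 1-orbit.

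Corollary \ref{cor:orbit induct} now says that $(x_1,\dots,x_n)$ and $(y_1,\dots,y_n)$ lie in the same $n$-orbit. So some automorphism $\Psi$ satisfies $\Psi(x_i)=y_i$ for all $i$. Both $\Psi$ and $\phi$ commute with $\theta$ and agree on the generators $x_i$, so they agree on every element $\theta^m(x_i)$, that is, on all of $\langle x_1,\dots,x_n\rangle$. Thus $\Psi$ extends $\phi$.

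The substantive work, building an automorphism that simultaneously respects the old tuple and the new generator, is already done in Corollary \ref{cor:orbit induct}. The only points needing care are that the restrictions of $\phi$ really are isomorphisms onto the right subalgebras, and that agreement on generators forces agreement on the whole generated subalgebra. Both are routine.
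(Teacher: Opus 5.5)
Your proof is correct and is essentially the paper's argument. It inducts on $n$, extends the restriction of $\phi$ to $\langle x_1,\dots,x_{n-1}\rangle$ by the inductive hypothesis, uses 1-ultrahomogeneity to put $x_n$ and $y_n$ in the same 1-orbit, and then concludes with Corollary \ref{cor:orbit induct}. Your additional checks (that the restrictions are isomorphisms onto the right subalgebras, and that agreeing on generators forces agreement on the whole generated subalgebra) are routine details the paper leaves implicit.
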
 

\begin{proof} 
($\Rightarrow$) Immediate.   

($\Leftarrow$) 
 For some fixed $n>1$, assume that the monounary algebra $\mathcal{A}$ is $k$-ultrahomogeneous for each $k<n$.
 Consider an isomorphism $\phi\colon\mathcal{B}\rightarrow \mathcal{B}'$ between $n$-generated subalgebras of $\mathcal{A}$, say   $\mathcal{B}=\langle x_1,\dots,x_n \rangle$ and $\mathcal{B}'=\langle y_1,\dots,y_n \rangle$ with $\phi(x_i)=y_i$ for each $1\leq i \leq n$.  
  Let $\mathcal{D}=\langle x_1,\dots,x_{n-1} \rangle$, $\mathcal{D}'=\langle y_1,\dots,y_{n-1}\rangle$, and $\phi'=\phi|_D\colon \mathcal{D}\rightarrow \mathcal{D}'$. 
  Since $\mathcal{A}$ is $(n-1)$-ultrahomogeneous we may extend $\phi'$ to an automorphism $\psi$ of $\mathcal{A}$.
    Moreover, as  $\mathcal{A}$ is 1-ultrahomogeneous and $\langle x_n \rangle \cong \langle y_n \rangle$, there exists an automorphism $\varphi$ of $\mathcal{A}$ with $\varphi(x_n)=y_n$. 
    By Corollary \ref{cor:orbit induct}
    there exists an automorphism of $\mathcal{A}$ extending $\phi$, and so $\mathcal{A}$ is $n$-ultrahomogeneous. The
 result follows inductively.   
\end{proof}

Our aim now is to give an explicit description of all ultrahomogeneous monounary algebras. 

If $\mathcal{A}$ is an algebra and $\mathcal{B}$ is a subalgebra of $\mathcal{A}$ with $B\neq A$ then we call $\mathcal{B}$ a \textit{proper subalgebra}. 
Recall that any algebra without property subalgebras is trivially ultrahomogeneous. 
Hence, as a cyclic algebra   is generated by any of its elements we obtain:

\begin{lemma} \label{lem:cyclichom} For each $n\in \mathbb{N}$ the cycle $\mathcal{Z}_n$ is ultrahomogeneous.  
\end{lemma}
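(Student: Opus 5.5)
The plan is to show that $\mathcal{Z}_n$ has no proper subalgebras. Ultrahomogeneity then reduces to extending isomorphisms of $\mathcal{Z}_n$ onto itself, and every such isomorphism is already an automorphism.

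First, let $\mathcal{B}$ be any subalgebra of $\mathcal{Z}_n$ and pick $b\in B$. Since $\mathcal{B}$ is closed under $\suc$, it contains $\suc^k(b)=b+k \bmod n$ for every $k\in\omega$. As $k$ ranges over $0,\dots,n-1$, these values exhaust $\{0,1,\dots,n-1\}$. Hence $\langle b\rangle=\mathcal{Z}_n$, so every non-empty subalgebra, and in particular every finitely generated one, equals $\mathcal{Z}_n$. (Equivalently, every element of $\mathcal{Z}_n$ is cyclic and $\mathcal{Z}_n$ is connected, so Lemma~\ref{lemma: subalgebra}(ii) gives $\operatorname{cyc}(\mathcal{Z}_n)\subseteq B$.)

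Next, take any $m\in\mathbb{N}$ and an isomorphism $\phi\colon\mathcal{B}\to\mathcal{B}'$ between $m$-generated subalgebras. By the previous step $\mathcal{B}=\mathcal{B}'=\mathcal{Z}_n$, so $\phi$ is a bijection of $\{0,\dots,n-1\}$ commuting with $\suc$. That is exactly an element of $\Aut(\mathcal{Z}_n)$, and it trivially extends itself. Therefore $\mathcal{Z}_n$ is $m$-ultrahomogeneous for every $m$, i.e.\ ultrahomogeneous. Alternatively, by Proposition~\ref{prop:early hom iff 1hom} it suffices to treat the case $m=1$.

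No step is a real obstacle. The only point needing care is to confirm that the paper's definition of $n$-generated substructure allows the generators to repeat or to be fewer than $n$. Either reading works, because every non-empty generating set produces all of $\mathcal{Z}_n$.
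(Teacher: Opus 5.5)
Your proof is correct and follows the paper's argument. The paper likewise notes that a cyclic algebra is generated by any one of its elements, so $\mathcal{Z}_n$ has no proper subalgebras and is therefore trivially ultrahomogeneous.
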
 

While the following lemma is stated in  \cite[Lemma 4.28]{Quinninv}  for inverse semigroups, the proof trivially translates to any algebra  and as such we skip it:  

 \begin{lemma} \label{lemma:homs iso}   Let $\mathcal{A}$ and $\mathcal{B}$ be a pair of isomorphic ultrahomogeneous monounary algebras.
 Then any isomorphism between f.g. generated subalgebras of $\mathcal{A}$ and $\mathcal{B}$ can be extended to an isomorphism from $\mathcal{A}$ to $\mathcal{B}$. 
\end{lemma}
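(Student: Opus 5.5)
Fix an isomorphism $\sigma\colon\mathcal{A}\rightarrow\mathcal{B}$ and an isomorphism $\phi\colon\mathcal{C}\rightarrow\mathcal{D}$ between finitely generated subalgebras $\mathcal{C}$ of $\mathcal{A}$ and $\mathcal{D}$ of $\mathcal{B}$. The plan is to transport everything into $\mathcal{A}$ via $\sigma$ and apply ultrahomogeneity of $\mathcal{A}$ once. Consider $\sigma^{-1}(\mathcal{D})$. Since $\sigma^{-1}$ is an isomorphism of algebras, it maps subalgebras to subalgebras, and it sends a generating set of $\mathcal{D}$ to a generating set of $\sigma^{-1}(\mathcal{D})$. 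Hence $\sigma^{-1}(\mathcal{D})$ is a finitely generated subalgebra of $\mathcal{A}$, with the same number of generators as $\mathcal{D}$. If $\mathcal{C}$ is $n$-generated, so is $\mathcal{D}\cong\mathcal{C}$, and therefore so is $\sigma^{-1}(\mathcal{D})$.

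Next form the composite $\sigma^{-1}\circ\phi\colon\mathcal{C}\rightarrow\sigma^{-1}(\mathcal{D})$. It is an isomorphism between $n$-generated subalgebras of $\mathcal{A}$. Since $\mathcal{A}$ is ultrahomogeneous, in particular $n$-ultrahomogeneous, there is an automorphism $\alpha\in\Aut(\mathcal{A})$ extending $\sigma^{-1}\circ\phi$. Then $\Phi:=\sigma\circ\alpha\colon\mathcal{A}\rightarrow\mathcal{B}$ is a composite of isomorphisms, hence an isomorphism. For $c\in C$ we have $\Phi(c)=\sigma(\sigma^{-1}(\phi(c)))=\phi(c)$, so $\Phi$ extends $\phi$, as required.

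There is no real obstacle; only two bookkeeping points need checking. First, one must make sure the notion of ``f.g.\ subalgebra'' matches the $n$-generated hypothesis in the definition of ultrahomogeneity, which is why the generator count is tracked above. Second, note that ultrahomogeneity of $\mathcal{B}$ is not actually needed: the argument uses only that $\mathcal{A}$ is ultrahomogeneous and that $\mathcal{A}\cong\mathcal{B}$. Nothing specific to monounary algebras is used, which is consistent with the paper's remark that the proof works for any algebra.
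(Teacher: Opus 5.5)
Your proof is correct and is the standard transport argument the paper has in mind: it omits the proof and points to \cite[Lemma 4.28]{Quinninv}, where one likewise pulls $\phi$ back along a fixed isomorphism $\sigma$, extends $\sigma^{-1}\circ\phi$ to some $\alpha\in\Aut(\mathcal{A})$, and takes $\sigma\circ\alpha$. Your generator-count bookkeeping and your remark that ultrahomogeneity of $\mathcal{B}$ is never used are both accurate.
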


\begin{proposition}\label{prop:connected}  A monounary algebra $\mathcal{A}$ is  ultrahomogeneous if and only if each $\mathcal{B}\in \mathfrak{C}(\mathcal{A})$ is   ultrahomogeneous and whenever $\mathcal{B},\mathcal{B}'\in \mathfrak{C}(\mathcal{A})$ are such that $|$cyc$(\mathcal{B})|=|$cyc$(\mathcal{B}')|$ then $\mathcal{B}\cong \mathcal{B}'$. 
\end{proposition}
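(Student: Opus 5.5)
By Proposition \ref{prop:early hom iff 1hom}, it suffices throughout to work with 1-ultrahomogeneity. An isomorphism $\langle x\rangle\to\langle y\rangle$ with $x\mapsto y$ exists exactly when $\langle x\rangle\cong\langle y\rangle$. By Lemma \ref{lemma:cyclic stuff}, for $x\in B$ and $y\in B'$ with $\mathcal{B},\mathcal{B}'\in\mathfrak{C}(\mathcal{A})$, this happens exactly when ht$(x)=$ ht$(y)$ and $|$cyc$(\mathcal{B})|=|$cyc$(\mathcal{B}')|$. So $\mathcal{A}$ is ultrahomogeneous if and only if any two such elements are swapped by some automorphism of $\mathcal{A}$. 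I will prove both directions from this reformulation, using Lemma \ref{lemma: iso con} and Corollary \ref{cor: iso con} to move between automorphisms of $\mathcal{A}$ and isomorphisms of components.

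($\Rightarrow$) Let $\mathcal{B}\in\mathfrak{C}(\mathcal{A})$ and take $x,y\in B$ with $\langle x\rangle\cong\langle y\rangle$. Ultrahomogeneity of $\mathcal{A}$ gives $\psi\in\Aut(\mathcal{A})$ with $\psi(x)=y$. By Lemma \ref{lemma: iso con}, $\psi(\mathcal{B})=\mathcal{B}$, so $\psi|_B\in\Aut(\mathcal{B})$ and $\mathcal{B}$ is 1-ultrahomogeneous, hence ultrahomogeneous.

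Now suppose $|$cyc$(\mathcal{B})|=|$cyc$(\mathcal{B}')|$. I need elements $x\in B$ and $y\in B'$ of equal height. If both cycle sets are nonempty, take $x,y$ cyclic, so both have height $0$. If both are empty, every element has infinite height, and any $x\in B$, $y\in B'$ will do. In either case Lemma \ref{lemma:cyclic stuff} gives $\langle x\rangle\cong\langle y\rangle$. An automorphism sending $x$ to $y$ then maps $\mathcal{B}$ onto $\mathcal{B}'$ by Lemma \ref{lemma: iso con}, so $\mathcal{B}\cong\mathcal{B}'$.

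($\Leftarrow$) Take $x\in B$ and $y\in B'$ with ht$(x)=$ ht$(y)$ and equal cycle counts. By hypothesis there is an isomorphism $\alpha\colon\mathcal{B}\to\mathcal{B}'$. Since $\alpha$ preserves heights, $\langle\alpha(x)\rangle\cong\langle x\rangle\cong\langle y\rangle$ inside $\mathcal{B}'$. Because $\mathcal{B}'$ is ultrahomogeneous, there is $\beta\in\Aut(\mathcal{B}')$ with $\beta(\alpha(x))=y$. Put $\gamma=\beta\circ\alpha\colon\mathcal{B}\to\mathcal{B}'$, which sends $x$ to $y$.

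It remains to extend $\gamma$ to an automorphism of $\mathcal{A}$, via Corollary \ref{cor: iso con}. If $\mathcal{B}=\mathcal{B}'$, use $\gamma$ on $\mathcal{B}$ and the identity on every other component. If $\mathcal{B}\neq\mathcal{B}'$, let $\pi$ be the transposition of $\mathfrak{C}(\mathcal{A})$ swapping $\mathcal{B}$ and $\mathcal{B}'$. Use $\gamma$ on $\mathcal{B}$, $\gamma^{-1}$ on $\mathcal{B}'$, and the identity on all other components. Corollary \ref{cor: iso con} then gives an automorphism of $\mathcal{A}$ mapping $x$ to $y$, so $\mathcal{A}$ is 1-ultrahomogeneous, hence ultrahomogeneous.

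The only delicate point is the reduction at the start: Lemma \ref{lemma:cyclic stuff} must really capture exactly when a 1-generated isomorphism exists, including the infinite-height case where both subalgebras are copies of $\mathcal{N}$. Once that is checked, the rest is bookkeeping.
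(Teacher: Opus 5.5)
Your proof is correct and follows essentially the same route as the paper: the forward direction uses Lemma \ref{lemma: iso con} with cyclic or infinite-height generators exactly as the paper does. In the reverse direction, your composite $\beta\circ\alpha$ is an inlined instance of Lemma \ref{lemma:homs iso}, and your transposition plays the role of the paper's type-preserving bijection $\pi$ of $\mathfrak{C}(\mathcal{A})$ in Corollary \ref{cor: iso con}. (For your element-wise reformulation to give 1-ultrahomogeneity, also note that an arbitrary isomorphism $f\colon\langle x\rangle\to\langle y\rangle$ is determined by $f(x)$ and satisfies $\langle f(x)\rangle=\langle y\rangle$, so it suffices to apply your criterion to the pair $x, f(x)$.)
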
 

\begin{proof}
 $(\Rightarrow)$ Let $\mathcal{B}\in \mathfrak{C}(\mathcal{A})$, and let $\phi$  an isomorphism between f.g.   subalgebras of $\mathcal{B}$. 
 Then as $\mathcal{A}$ is  ultrahomogeneous there exists an automorphism of $\mathcal{A}$ extending $\phi$ which,  by Lemma \ref{lemma: iso con}, restricts to an automorphism of $\mathcal{B}$.
  Hence $\mathcal{B}$ is  ultrahomogeneous. 

Now suppose $\mathcal{B},\mathcal{B}'\in \mathfrak{C}(\mathcal{A})$ are such that $|$cyc$(\mathcal{B})|=|$cyc$(\mathcal{B}')|=k$.
 It suffices to show that $\mathcal{B}$ and $\mathcal{B}'$ possess isomorphic f.g.  subalgebras, since any isomorphism between them can be extended to an automorphism of $\mathcal{A}$, which then restricts to an isomorphism between $\mathcal{B}$ and $\mathcal{B}'$ again by Lemma \ref{lemma: iso con}. 
 If $k=0$ then  any $x\in B, y\in B'$ are such that $\langle x \rangle$ and $\langle y \rangle$ are isomorphic (to $\mathcal{N}$). 
 If $k>0$ then cyc$(\mathcal{B})\cong$ cyc$(\mathcal{B}')$ are 1-generated.
 
$(\Leftarrow)$ By Proposition \ref{prop:early hom iff 1hom} it suffices to show that $\mathcal{A}$ is 1-ultrahomogeneous.
Let $\phi\colon \langle x \rangle\rightarrow \langle y\rangle$ be an isomorphism and let $\mathcal{B},\mathcal{B}'\in \mathfrak{C}(\mathcal{A})$ be such that $x\in  {B}$ and $y\in  {B}'$.
Since $\phi$ is an isomorphism it follows from Lemma \ref{lemma:cyclic stuff} that $|$cyc$(\mathcal{B})|=|$cyc$(\mathcal{B}')|$, and hence $\mathcal{B}\cong \mathcal{B}'$. 
By  the ultrahomogeneity of  $\mathcal{B}$  and $\mathcal{B}'$  we may extend $\phi$ to an isomorphism $\varphi\colon \mathcal{B}\rightarrow \mathcal{B}'$   by lemma \ref{lemma:homs iso}.
Let $\pi$ be a bijection of $\mathfrak{C}(\mathcal{A})$ which preserves the isomorphism types of the connected components and with $\pi(\mathcal{B})=\mathcal{C}$. 
Then by Lemma \ref{cor: iso con} we may build an automorphism  of $\mathcal{A}$ via $\pi$ which extends $\varphi$, and thus extends $\phi$. 
    	\end{proof}

To classify  ultrahomogeneous monounary algebras it therefore suffices to consider the connected case. For this we require the following auxiliary lemma.

\begin{lemma} \label{lemma:above set iso} Let $\mathcal{A}=(A;\theta)$ be a monounary algebra and let $x,y\in A$. Suppose  that  for each $n\in \omega$ and each $a\in \theta^{-n}(x)\cap A_x$ and $b\in \theta^{-n}(y)\cap A_y$  we have $|\theta^{-1}(a)|=|\theta^{-1}(b)|$.  Then $\mathcal{A}_x\cong \mathcal{A}_y$. 
\end{lemma}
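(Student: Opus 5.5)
The plan is to build an isomorphism $\Phi\colon \mathcal{A}_x\rightarrow \mathcal{A}_y$ level by level, working downward from the roots $x$ and $y$. For $n\in\omega$ set $L_n(x)=\theta^{-n}(x)\cap A_x$ and $L_n(y)=\theta^{-n}(y)\cap A_y$. By Notation \ref{not:above set}, every element of $A_x$ reaches $x$ under some power of $\theta$ while staying inside $A_x$. Hence
\[ A_x=\bigsqcup_{n\in\omega} L_n(x), \]
and $\theta$ maps $L_{n+1}(x)$ into $L_n(x)$. The same holds for $y$.

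We then define bijections $\phi_n\colon L_n(x)\rightarrow L_n(y)$ by induction on $n$.
\begin{itemize}
\item Put $\phi_0(x)=y$.
\item Given $\phi_n$, note that for each $a\in L_n(x)$ the set $\theta^{-1}(a)\cap A_x$ is exactly the set of elements of $L_{n+1}(x)$ lying over $a$, and these sets partition $L_{n+1}(x)$ as $a$ ranges over $L_n(x)$.
\item The hypothesis gives $|\theta^{-1}(a)|=|\theta^{-1}(\phi_n(a))|$, since $a$ and $\phi_n(a)$ are both at level $n$. So we may choose a bijection $\pi_a\colon \theta^{-1}(a)\cap A_x\rightarrow \theta^{-1}(\phi_n(a))\cap A_y$.
\item Let $\phi_{n+1}=\bigcup_{a\in L_n(x)}\pi_a$. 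This is a bijection $L_{n+1}(x)\rightarrow L_{n+1}(y)$ satisfying $\theta(\phi_{n+1}(u))=\phi_n(\theta(u))$.
\end{itemize}
This step uses the axiom of choice (simultaneous choice of the $\pi_a$), which is harmless here.

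Finally set $\Phi=\bigcup_n \phi_n$. It is a bijection because the levels partition $A_x$ and $A_y$. For $u\in \text{dom}(\theta_{A_x})$ with $u\neq x$, we have $u\in L_{n+1}(x)$ for some $n$ and $\Phi(\theta(u))=\theta(\Phi(u))$ by construction. So $\Phi$ is an isomorphism of partial monounary algebras. This is the same mechanism as Lemma \ref{lemma:inductive built partials}, iterated transfinitely often along levels rather than applied once.

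The main obstacle is the bookkeeping at the root, where the counts need not match as stated.
\begin{itemize}
\item For $n\geq 1$ every element of $L_n(x)$ is acyclic, so $\theta^{-1}(a)\subseteq A_x$ and the hypothesis gives exactly the needed equality of cardinalities.
\item At $n=0$, if $x$ is cyclic then $\theta^{-1}(x)$ contains one cyclic element not lying in $A_x$. So $|\theta^{-1}(x)\cap A_x|=|\theta^{-1}(x)|-1$ (for finite counts). If $x$ and $y$ are both cyclic or both acyclic, the same correction applies on both sides and the counts still agree, also in the infinite case.
\item If exactly one of $x,y$ is cyclic, the hypothesis must be read with preimages taken inside $A_x$ and $A_y$, which is the intended reading for these partial algebras.
\item The root also affects the domain condition. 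If $x$ is a $1$-cycle then $x\in\text{dom}(\theta_{A_x})$, and one checks that $y$ must then be a $1$-cycle as well, so that $\Phi(\theta(x))=y=\theta(y)$.
\end{itemize}
I expect to handle these root cases by a short separate check, and the rest is routine.
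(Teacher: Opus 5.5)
Your level-by-level construction is the paper's own argument. The paper builds $\psi_k$ on $A_x^{(k)}=\bigcup_{i\le k}\theta^{-i}(x)\cap A_x$ by gluing fibre bijections $\pi_z$ and takes the union. For acyclic $x,y$ your version is complete. The trouble is at the root, which you rightly flag but resolve incorrectly.

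First, if $x$ is cyclic of period $k$, your levels $L_n(x)=\theta^{-n}(x)\cap A_x$ are not disjoint. We have $x\in L_{jk}(x)$ for all $j$, and an element of height $m$ lies in every $L_{m+jk}(x)$. This breaks three steps of your construction:
\begin{enumerate}
\item[(a)] the decomposition $A_x=\bigsqcup_n L_n(x)$ fails;
\item[(b)] $\theta$ does not map $L_{n+1}(x)$ into $L_n(x)$ when $k\ge 2$ and $k\mid n+1$;
\item[(c)] when $k=1$, the fibre $\theta^{-1}(x)\cap A_x$ contains $x$, so an arbitrary $\pi_x$ can send $x$ somewhere other than $\phi_0(x)=y$, and then $\bigcup_n\phi_n$ is not even a function.
\end{enumerate}
The repair is to let level $n\ge 1$ be the set of $a\in A_x$ with $\theta^n(a)=x$ and $\theta^{n-1}(a)$ acyclic. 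For cyclic $x$ these are exactly the elements of $A_x$ of height $n$. At the root, match only acyclic preimages.

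Second, and more seriously, your claim that a fixed point $x$ forces $y$ to be a fixed point is false. No separate check can save that case, because the lemma itself fails there. In $\mathcal{Z}_1+(\mathbb{Z};n\mapsto n-1)$, take $x$ the fixed point and $y=0$. Every element has exactly one preimage, so the hypothesis holds whether or not you intersect fibres with $A_x,A_y$. Yet $A_x=\{x\}$ while $A_y=\{0,1,2,\dots\}$. Likewise, a bare fixed point $x$ and a point $y$ on a bare $2$-cycle satisfy the literal hypothesis, but $\theta$ is defined at $x$ in $\mathcal{A}_x$ and not at $y$ in $\mathcal{A}_y$.

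So you must add an assumption, e.g.\ that $x,y$ are both acyclic, or both cyclic and either both or neither fixed points. Under that assumption, the literal root condition $|\theta^{-1}(x)|=|\theta^{-1}(y)|$ gives equally many acyclic preimages, and with the corrected levels your argument goes through. This costs the paper nothing: the lemma's only use, in Proposition~\ref{prop:1hom} with $\mathrm{ht}(u)=\mathrm{ht}(v)$ in a connected algebra, satisfies it. The paper's own proof glosses over the same point. Its $\pi_x\colon\theta^{-1}(x)\to\theta^{-1}(y)$ includes the cyclic preimage of $x$, which either lies outside $A_x$ or equals $x$.
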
 

\begin{proof} For each $z\in A$ and $k\in \omega$, let $A_z^{(k)}=: \bigcup_{0\leq i \leq k}\theta^{-i}(z) \cap A_z$. 

 Let $\psi_0$ be  the unique map between $\{x\}$ and $\{y\}$. 
 Suppose there exists $n\in \omega$ and maps $\psi_0,\psi_1,\dots,\psi_n$ with each $\psi_k\colon A_x^{(k)} \rightarrow A_y^{(k)}$ extending $\psi_{k-1}$.  
  For each $z\in A_x^{(n)}$ there exists a bijection $\pi_z\colon \theta^{-1}(z)\rightarrow \theta^{-1}(\psi_n(z))$  by our hypothesis.
Then the map $\psi_{n+1} = \psi_n \cup \{\pi_z \colon z\in A_x^{(n)} \}$ is an isomorphism from $A_x^{(n+1)}$ to   $A_y^{(n+1)}$. 
We may thus inductively build an isomorphism  $\bigcup_{n\in \omega} \psi_n \colon \mathcal{A}_x \rightarrow \mathcal{A}_y$ as required.  
\end{proof}

\begin{proposition}\label{prop:1hom} Let $\mathcal{A}=(A;\theta)$ be a connected monounary algebra.
 Then $\mathcal{A}$ is  ultrahomogeneous if and only if for all $x,y\in A$, if ht$(x)=$ht$(y)$ then $|\theta^{-1}(x)|=|\theta^{-1}(y)|$. 
\end{proposition}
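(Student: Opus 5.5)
We prove both directions by reducing to statements about single elements. By Proposition \ref{prop:early hom iff 1hom}, ultrahomogeneity is equivalent to 1-ultrahomogeneity. An isomorphism $\langle x\rangle\to\langle y\rangle$ is determined by where it sends the generator, so it must send $x$ to $y$. Hence, for connected $\mathcal{A}$, ultrahomogeneity amounts to the following: whenever $\langle x\rangle\cong\langle y\rangle$ via $x\mapsto y$, some automorphism maps $x$ to $y$. Since $x,y$ lie in the same component, Lemma \ref{lemma:cyclic stuff} says $\langle x\rangle\cong\langle y\rangle$ exactly when ht$(x)=$ht$(y)$. This includes the infinite-height case, where both are copies of $\mathcal{N}$.

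$(\Rightarrow)$ Let ht$(x)=$ht$(y)$. Then $\langle x\rangle\cong\langle y\rangle$ with $x\mapsto y$, and ultrahomogeneity gives $\psi\in\Aut(\mathcal{A})$ with $\psi(x)=y$. This $\psi$ restricts to a bijection $\theta^{-1}(x)\to\theta^{-1}(y)$, so $|\theta^{-1}(x)|=|\theta^{-1}(y)|$.

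$(\Leftarrow)$ Fix $x,y$ of equal height. We must find an automorphism mapping $x$ to $y$. We split into two cases.

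\emph{Finite height $k$.} Here cyc$(\mathcal{A})$ is a single cycle. Three steps:
\begin{enumerate}
\item[(a)] For an acyclic $z$, every element of $\theta^{-n}(z)\cap A_z$ has height ht$(z)+n$. For a cyclic $z$, every element at depth $n\geq 1$ of $A_z$ has height $n$. At the root $z$ of $A_z$ we must count preimages inside $A_z$. For cyclic $z$ this is $|\theta^{-1}(z)|$ minus the unique cyclic preimage. Equal cardinals stay equal after removing one element, so these counts agree for any two cyclic elements.
\item[(b)] By the hypothesis together with Lemma \ref{lemma:above set iso}, we get $\mathcal{A}_{\theta^m(x)}\cong\mathcal{A}_{\theta^m(y)}$ for all $0\le m\le k$. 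We also get $\mathcal{A}_c\cong\mathcal{A}_d$ for all cyclic $c,d$.
\item[(c)] Let $\pi$ be the rotation of cyc$(\mathcal{A})$ sending $\theta^k(x)$ to $\theta^k(y)$. Choose isomorphisms $\mathcal{A}_c\to\mathcal{A}_{\pi(c)}$ for every cyclic $c$. Lemma \ref{lemma:cycles aut} then gives an automorphism sending $\theta^k(x)$ to $\theta^k(y)$. Corollary \ref{cor:heightauto} upgrades this to an automorphism sending $x$ to $y$.
\end{enumerate}

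\emph{Infinite height.} Now $\mathcal{A}$ is acyclic and all elements have the same number $\alpha\geq 1$ of preimages. By Lemma \ref{lemma:above set iso}, all the partial algebras $\mathcal{A}_z$ are isomorphic. By connectivity and acyclicity, $A=\bigcup_n A_{\theta^n(x)}=\bigcup_n A_{\theta^n(y)}$, and both unions are increasing. We build isomorphisms $\psi_n\colon\mathcal{A}_{\theta^n(x)}\to\mathcal{A}_{\theta^n(y)}$ with $\psi_0(x)=y$ and $\psi_{n+1}$ extending $\psi_n$. Given $\psi_n$, apply Lemma \ref{lemma:inductive built partials} at $\theta^{n+1}(x)$. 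Use a bijection of preimage sets sending $\theta^n(x)\mapsto\theta^n(y)$, which exists since both sets have size $\alpha$. On $\mathcal{A}_{\theta^n(x)}$ use $\psi_n$, and on each other preimage branch use an arbitrary isomorphism. The union $\bigcup_n\psi_n$ is then a homomorphism and is injective. It is surjective because $\bigcup_n A_{\theta^n(y)}=A$. So it is an automorphism with $x\mapsto y$.

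The main obstacle is this infinite-height case. Corollary \ref{cor:heightauto} is unavailable there, since there is no cyclic base to reduce to. The care needed is in checking that the nested construction exhausts $A$ on both sides, so that the union is genuinely a bijection.
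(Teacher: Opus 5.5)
Your proof is correct. The $(\Rightarrow)$ direction and the finite-height case of $(\Leftarrow)$ follow the paper's argument exactly: obtain $\mathcal{A}_u\cong\mathcal{A}_v$ for equal heights from Lemma~\ref{lemma:above set iso}, glue these isomorphisms over a rotation of the cycle via Lemma~\ref{lemma:cycles aut}, and lift with Corollary~\ref{cor:heightauto}. Your step (a), which counts preimages at a cyclic root inside $\mathcal{A}_c$ by discarding the cyclic predecessor, makes explicit a point the paper glosses over when it applies Lemma~\ref{lemma:above set iso}.

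You genuinely differ from the paper in the acyclic case. The paper disposes of it in one line: constant preimage count $\alpha$ gives $\mathcal{A}\cong\mathcal{B}_\alpha$, which is transitive by the external classification of transitive monounary algebras. You instead build the automorphism directly, as the union of nested partial isomorphisms $\mathcal{A}_{\theta^n(x)}\to\mathcal{A}_{\theta^n(y)}$ obtained from Lemma~\ref{lemma:inductive built partials}. This is a ``limit'' version of Corollary~\ref{cor:heightauto} that needs no cyclic base, and it checks out. The sets $A_{\theta^n(x)}$ increase and exhaust $A$, and likewise on the $y$ side. The union is well defined and bijective. It respects $\theta$ because $w$ and $\theta(w)$ both lie in $A_{\theta^{n+1}(x)}$ whenever $w\in A_{\theta^n(x)}$.

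The paper's route is shorter, but it leans on an outside result. It also relies on a fact the paper does not spell out: every connected acyclic algebra with constant preimage count $\alpha$ is isomorphic to $\mathcal{B}_\alpha$, whereas the uniqueness it states is only among transitive algebras. Your route is self-contained and in effect reproves the transitivity of $\mathcal{B}_\alpha$ from the paper's own lemmas.

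One small slip: an isomorphism $\phi\colon\langle x\rangle\to\langle y\rangle$ need not send $x$ to $y$ when $x$ is cyclic, since any rotation of the cycle is such an isomorphism. This is harmless: replace $y$ by $\phi(x)$, which generates the same subalgebra and has the same height. The paper makes the same implicit identification.
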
 

\begin{proof} Note that as $\mathcal{A}$ is connected it  follows from Lemma \ref{lemma:cyclic stuff} that ht$(x)$=ht$(y)$ if and only if $\langle x \rangle\cong \langle y \rangle$. 

$(\Rightarrow)$ Let $x,y\in A$ be such that ht$(x)$=ht$(y)$.
 Then by the  ultrahomogeneity of $\mathcal{A}$ we may extend the isomorphism between $\langle x \rangle$ to $\langle y \rangle$ to an automorphism $\phi$ of $\mathcal{A}$.
 Since $\phi(x)=y$ it follows that $|\theta^{-1}(x)|=|\theta^{-1}(y)|$. 

$(\Leftarrow)$ By Proposition \ref{prop:early hom iff 1hom} it suffices to show that $\mathcal{A}$ is 1-ultrahomogeneous.
 If cyc$(\mathcal{A})=\emptyset$ then ht$(x)=\infty$ for each $x\in A$, and hence by our hypothesis there exists a cardinal $\alpha$ with $|\theta^{-1}(x)|=\alpha$, so that $\mathcal{A}\cong \mathcal{B}_{\alpha}$  is transitive. 
 Hence, if $\phi\colon \langle x \rangle \rightarrow \langle y \rangle$ is an isomorphism for $x,y\in A$, then there exists an automorphism of $\mathcal{A}$ mapping $x$ to $y$, which clearly extends $\phi$. 
 
Now suppose cyc$(\mathcal{A})$ is non-empty.
 We claim that $\mathcal{A}_u\cong \mathcal{A}_v$ if ht$(u)=$ht$(v)$.  
Indeed, for each $n\in \mathbb{N}$ and for each $a\in \theta^{-n}(u)\cap A_u$ and each $b\in \theta^{-n}(v)\cap A_v$ we have that ht$(a)$=ht$(b)$=ht$(u)+n$, and so $|\theta^{-1}(a)|=|\theta^{-1}(b)|$ by our hypothesis.
 The claim then follows  by Lemma \ref{lemma:above set iso}.
  Now let  $x,y\in A$ be such that $\langle x \rangle\cong \langle y \rangle$, so that $x$ and $y$ have common height $k$, say. 
  Letting $c=\theta^k(x)$ and $d=\theta^k(y)$, there exists by Lemma \ref{lem:cyclichom}  an automorphism $\pi$ of cyc$(\mathcal{A})$ which maps $c$ to $d$.
  By our claim we may fix some isomorphism $\phi_u\colon \mathcal{A}_u\rightarrow \mathcal{A}_{\pi(u)}$ each $u\in$ cyc$(\mathcal{A})$.
   Then $\phi=\bigcup_{u\in \text{cyc}(\mathcal{A})}\phi_u$ is an automorphism of $\mathcal{A}$  by Lemma  \ref{lemma:cycles aut} which maps $c$ to $d$. 
  Since $\mathcal{A}_{\theta^m(x)}\cong \mathcal{A}_{\theta^m(y)}$ for each $m\in \omega$ by our claim,  it follows from Corollary \ref{cor:heightauto} that there exists an automorphism of $\mathcal{A}$ mapping $x$ to $y$.  
\end{proof}

We arrive at the following implicit description of all ultrahomogeneous monounary algebras: 

\begin{theorem}\label{thm:mainhom} Let $\mathcal{A}=(A;\theta)$ be a monounary algebra. Then  $\mathcal{A}$ is ultrahomogeneous if and only if whenever $x,y\in A$ are such that $\langle x \rangle \cong \langle y \rangle$, then $|\theta^{-1}(x)|=|\theta^{-1}(y)|$ and the connected components of $\mathcal{A}$ containing $x$ and $y$  are isomorphic.  
 \end{theorem}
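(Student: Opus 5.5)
The plan is to derive Theorem \ref{thm:mainhom} by combining Proposition \ref{prop:connected} with Proposition \ref{prop:1hom}. The bridge between the two is Lemma \ref{lemma:cyclic stuff}: for $x\in B$ and $y\in B'$ with $\mathcal{B},\mathcal{B}'\in\mathfrak{C}(\mathcal{A})$, we have $\langle x\rangle\cong\langle y\rangle$ if and only if ht$(x)=$ht$(y)$ and $|$cyc$(\mathcal{B})|=|$cyc$(\mathcal{B}')|$.

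For $(\Rightarrow)$, suppose $\mathcal{A}$ is ultrahomogeneous and $\langle x\rangle\cong\langle y\rangle$. By 1-ultrahomogeneity the isomorphism $x\mapsto y$ extends to an automorphism $\phi$ of $\mathcal{A}$.
\begin{itemize}
\item Since $\phi(x)=y$, $\phi$ restricts to a bijection $\theta^{-1}(x)\to\theta^{-1}(y)$, so $|\theta^{-1}(x)|=|\theta^{-1}(y)|$.
\item By Lemma \ref{lemma: iso con}, $\phi$ maps the component containing $x$ onto the component containing $y$, so these components are isomorphic.
\end{itemize}
This direction is immediate.

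For $(\Leftarrow)$, I would verify the two conditions of Proposition \ref{prop:connected}.

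First, each component is isomorphic to every other component with the same cycle size. Let $\mathcal{B},\mathcal{B}'\in\mathfrak{C}(\mathcal{A})$ with $|$cyc$(\mathcal{B})|=|$cyc$(\mathcal{B}')|$.
\begin{itemize}
\item If this common size is $0$, take any $x\in B$ and $y\in B'$. Both have infinite height, so $\langle x\rangle\cong\langle y\rangle\cong\mathcal{N}$.
\item If it is positive, take cyclic elements $x\in B$ and $y\in B'$. Both have height $0$, so $\langle x\rangle\cong\langle y\rangle$ by Lemma \ref{lemma:cyclic stuff}.
\end{itemize}
In either case the hypothesis gives $\mathcal{B}\cong\mathcal{B}'$.

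Second, each component $\mathcal{B}$ is ultrahomogeneous. By Proposition \ref{prop:1hom} it suffices to show that any $x,y\in B$ with ht$(x)=$ht$(y)$ satisfy $|\theta^{-1}(x)|=|\theta^{-1}(y)|$. Within one component, equal height gives $\langle x\rangle\cong\langle y\rangle$ by Lemma \ref{lemma:cyclic stuff}. The hypothesis then yields the preimage condition. Preimages under $\theta$ inside $\mathcal{B}$ coincide with those in $\mathcal{A}$, since components are closed under preimages. Proposition \ref{prop:connected} then gives that $\mathcal{A}$ is ultrahomogeneous.

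I do not expect a real obstacle. The only point needing care is the $0$-cycle case in the first condition, where we must use elements of infinite height rather than cyclic ones. The component-isomorphism clause of the hypothesis is used only in that first condition, and the preimage clause only for connected ultrahomogeneity.
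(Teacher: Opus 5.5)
Your proof is correct and is essentially the paper's argument. The forward direction uses the automorphism extending $x\mapsto y$ together with Lemma \ref{lemma: iso con}, and the converse checks the two conditions of Proposition \ref{prop:connected} via Lemma \ref{lemma:cyclic stuff} and Proposition \ref{prop:1hom}. Your separate treatment of components with no cyclic elements, using elements of infinite height whose generated subalgebras are copies of $\mathcal{N}$, also fills a small gap in the paper's proof. There the comparison uses only cyclic elements $c\in\text{cyc}(\mathcal{B})$ and $d\in\text{cyc}(\mathcal{B}')$, which do not exist in that case.
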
 
 
 \begin{proof}
$(\Rightarrow)$ If $\langle x \rangle \cong \langle y \rangle$ then there exists an automorphism of $\mathcal{A}$ mapping $x$ to $y$ by ultrahomogeneity. Clearly $|\theta^{-1}(x)|=|\theta^{-1}(y)|$ and the connected components of containing $x$ and $y$ are isomorphic by  Lemma \ref{lemma: iso con}. 

$(\Leftarrow)$   Let $\mathcal{B}\in \mathfrak{C}(\mathcal{A})$. 
Then for any $x,y\in B$, if ht$(x)$=ht$(y)$ then $\langle x \rangle \cong \langle y \rangle$, and so $|\theta^{-1}(x)|=|\theta^{-1}(y)|$ by our hypothesis.
 Hence $\mathcal{B}$ is   ultrahomogeneous by Proposition \ref{prop:1hom}.
   Now suppose $\mathcal{B},\mathcal{B}'\in \mathfrak{C}(\mathcal{A})$ are such that $|$cyc$(\mathcal{B})|=|$cyc$(\mathcal{B}')|$. Then for any $c\in$cyc$(\mathcal{B})$ and $d\in$cyc$(\mathcal{B}')$ we have 
\[ \langle c \rangle = \text{cyc}(\mathcal{B}) \cong \text{cyc}(\mathcal{B}')=\langle d \rangle, 
\] 
and so by our hypothesis  $\mathcal{B}\cong \mathcal{B}'$. Hence $\mathcal{A}$ is ultrahomogeneous by Proposition \ref{prop:connected}.  
 \end{proof} 

For an explicit description of all ultrahomogeneous monounary algebras the remaining task is to classify  connected monounary algebras  $\mathcal{A}=(A;\theta)$ satisfying
\begin{equation} \label{eq: ht} \text{ht}(x)=\text{ht}(y) \Rightarrow  |\theta^{-1}(x)|=|\theta^{-1}(y)|. 
\end{equation}
We first consider the case where $\mathcal{A}$ contains no cyclic elements.

\begin{corollary}\label{cor:ultra-hom} Let $\mathcal{A}=(A;\theta)$ be a monounary algebra without cyclic elements. Then $\mathcal{A}$ is ultrahomogeneous if and only if it is transitive.
\end{corollary}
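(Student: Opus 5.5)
We prove both directions using Theorem \ref{thm:mainhom} together with the observation that, when $\mathcal{A}$ has no cyclic elements, every element has height $\infty$. Consequently, by Lemma \ref{lemma:cyclic stuff}, $\langle x\rangle\cong\mathcal{N}$ for every $x\in A$, so any two 1-generated subalgebras are isomorphic via the map sending $x$ to $y$.

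($\Leftarrow$) Suppose $\mathcal{A}$ is transitive. We verify 1-ultrahomogeneity and then apply Proposition \ref{prop:early hom iff 1hom}. Any isomorphism $\phi\colon\langle x\rangle\to\langle y\rangle$ is determined by $\phi(x)=y$, since it must send $\theta^n(x)$ to $\theta^n(y)$. Transitivity gives an automorphism $\psi$ with $\psi(x)=y$. Then $\psi(\theta^n(x))=\theta^n(y)$ for all $n$, so $\psi$ extends $\phi$.

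($\Rightarrow$) Suppose $\mathcal{A}$ is ultrahomogeneous, and let $x,y\in A$. Since $\langle x\rangle\cong\langle y\rangle$ via $x\mapsto y$, ultrahomogeneity extends this isomorphism to an automorphism mapping $x$ to $y$. Hence $\mathcal{A}$ is transitive.

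This argument needs neither Proposition \ref{prop:1hom} nor Proposition \ref{prop:connected}; the only delicate point is that $\langle x\rangle\cong\mathcal{N}$ holds even across different connected components, and Lemma \ref{lemma:cyclic stuff} supplies exactly this. As a consistency check, Theorem \ref{thm:mainhom} says the components are then pairwise isomorphic with constant in-degree $\alpha$, which gives $\mathcal{A}\cong\kappa\cdot\mathcal{B}_\alpha$. The argument is short, so I expect no real obstacle.
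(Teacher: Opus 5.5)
Your proof is correct and is essentially the paper's argument. With no cyclic elements every $\langle x\rangle\cong\mathcal{N}$, so 1-ultrahomogeneity coincides with transitivity, and Proposition \ref{prop:early hom iff 1hom} upgrades this to full ultrahomogeneity; as you note, the paper's extra citation of Theorem \ref{thm:mainhom} is not really needed. The one step you leave implicit is why an arbitrary isomorphism $\langle x\rangle\to\langle y\rangle$ must send $x$ to $y$: $x$ is the unique element of $\langle x\rangle$ outside $\theta(\langle x\rangle)$, and likewise $y$ in $\langle y\rangle$.
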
 

\begin{proof} Since $\mathcal{A}$ contains no cyclic elements we have that $\langle x\rangle\cong \mathcal{N}$ for every $x\in A$. Hence $\mathcal{A}$ is 1-ultrahomogeneous if and only if it is transitive, to which the result follows by Proposition \ref{prop:early hom iff 1hom} and Theorem \ref{thm:mainhom}.   
\end{proof}

On the other hand, connected monounary algebras with elements of finite height and satisfying \eqref{eq: ht} can be built up from cycles: 

 \begin{notation}  For $n\in \mathbb{N}$ and (possibly finite) countable list $\alpha_0,\alpha_1,\dots$ of non-zero cardinals, we let $\mathcal{A}[n;\alpha_0,\alpha_1,\dots]$ denote the connected monounary algebra with: 
 \begin{itemize}
 \item set of cyclic elements $C$ of size $n$, 
 \item $|\theta^{-1}(c)\setminus C|=\alpha_0$ for each $c\in C$, 
 \item if ht$(x)=k> 0$  then $|\theta^{-1}(x)|=\alpha_k$.    
\end{itemize}   
\end{notation} 

For example, the infinite monounary algebra $(\mathbb{N};f)$ with $f(1)=1$ and $f(i)=i-1$ for $i>1$ is isomorphic to $\mathcal{A}[1;1,1,\dots]$. 
Since isomorphisms between monounary alebras preserve height and $\theta^{-1}$, we have $$\mathcal{A}[n;\alpha_0,\alpha_1,\dots]\cong \mathcal{A}[m;\beta_0,\beta_1,\dots] \text{ if and only if }n=m \text{ and } \alpha_k=\beta_k \text{ for all } k.$$  We have thus shown: 

\begin{theorem}\label{thm:hom con class} A connected monounary algebra   is ultrahomogeneous if and only if isomorphic to either $\mathcal{B}_\alpha$ or $\mathcal{A}[n;\alpha_0,\alpha_1,\dots]$ for some  $n\in \mathbb{N}$  and cardinals $\alpha,\alpha_0,\alpha_1,\dots$. 
\end{theorem}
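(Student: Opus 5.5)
The plan is to split according to whether $\mathcal{A}$ has cyclic elements, since $\mathcal{A}$ is connected and so either every element has infinite height or cyc$(\mathcal{A})$ is a single cycle of some size $n\in\mathbb{N}$. Throughout I will use Proposition \ref{prop:1hom}: a connected $\mathcal{A}$ is ultrahomogeneous if and only if it satisfies \eqref{eq: ht}.

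\emph{No cyclic elements.} Here every element has height $\infty$. Condition \eqref{eq: ht} then says that $|\theta^{-1}(x)|=\alpha$ for a single cardinal $\alpha$ and all $x\in A$. Because $A$ is nonempty and has no cyclic elements, every element has a $\theta$-image that is again acyclic. I will argue that $\alpha>0$: if $\alpha=0$, no element would lie in the image of $\theta$, which contradicts $\theta(x)\in A$. So $\mathcal{A}$ is a connected algebra with constant in-degree $\alpha>0$, which is $\mathcal{B}_\alpha$ by the uniqueness cited from \cite{monohom}. This is exactly the step used in the proof of Proposition \ref{prop:1hom}. Conversely, $\mathcal{B}_\alpha$ is transitive with no cyclic elements, so it is ultrahomogeneous by Corollary \ref{cor:ultra-hom}.

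\emph{Cyclic elements present.} Let $C=\,$cyc$(\mathcal{A})$ with $|C|=n$. Every cyclic element has height $0$, so \eqref{eq: ht} makes $|\theta^{-1}(c)|$ constant on $C$. Since each $c$ has exactly one cyclic preimage, $|\theta^{-1}(c)\setminus C|$ is a constant $\alpha_0$. For each $k>0$, \eqref{eq: ht} likewise makes $|\theta^{-1}(x)|$ a constant $\alpha_k$ over elements of height $k$.

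The main obstacle is matching the notation, which asks for nonzero cardinals in a possibly finite list. Elements of height $k+1$ are exactly the preimages of height-$k$ elements, excluding cyclic preimages when $k=0$. So if some $\alpha_k=0$, there are no elements of height $k+1$, the list can be truncated at the first zero, and the algebra has finite height. I will interpret the possibly finite list this way: the list stops at the last nonzero $\alpha_k$, and elements of the top height are leaves. The case where all elements are cyclic, so $\mathcal{A}\cong\mathcal{Z}_n$, also needs attention. I will treat it as the degenerate case $\mathcal{A}[n;\,]$ with an empty list, consistent with Lemma \ref{lem:cyclichom}. With this convention, $\mathcal{A}$ is determined by $n$ and the $\alpha_k$: the isomorphism can be built as in Lemma \ref{lemma:above set iso} over each $\mathcal{A}_c$, with the pieces glued by Lemma \ref{lemma:cycles aut}. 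Hence $\mathcal{A}\cong\mathcal{A}[n;\alpha_0,\alpha_1,\dots]$.

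For the converse, note that in $\mathcal{A}[n;\alpha_0,\alpha_1,\dots]$ the in-degree of an element depends only on its height. For a cyclic element it is $\alpha_0+1$, and for an element of height $k>0$ it is $\alpha_k$ (or $0$ beyond the end of the list). So \eqref{eq: ht} holds, and Proposition \ref{prop:1hom} gives ultrahomogeneity.
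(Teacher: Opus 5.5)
Your proof is correct and follows essentially the paper's route. Like the paper, you reduce via Proposition \ref{prop:1hom} to condition \eqref{eq: ht}, identify the cycle-free case with $\mathcal{B}_\alpha$ (as in the proof of Proposition \ref{prop:1hom} and Corollary \ref{cor:ultra-hom}), and read off $\mathcal{A}[n;\alpha_0,\alpha_1,\dots]$ directly from \eqref{eq: ht} in the cyclic case; beyond that, you fill in details the paper leaves implicit, namely showing $\alpha>0$, truncating the list at the first zero, treating $\mathcal{Z}_n$ as the empty list, and checking that the parameters determine the algebra up to isomorphism.
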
 

Recalling Notation \ref{not:sum}, all ultrahomogeneous monounary algebras can then be explicitly constructed  from the Theorem \ref{thm:hom con class} and Proposition \ref{prop:connected}: 

\begin{theorem}\label{thm:hom class}    A   monounary algebra   is ultrahomogeneous if and only if     isomorphic to  
\[  \beta\cdot \mathcal{B}_{\alpha} +  \sum_{k\in \mathbb{N}} \beta_k\cdot \mathcal{A}[n_k;\alpha^k_0,\alpha^k_1,\dots]
\]   
for some distinct $n_1, n_2, \dots$ $\in \mathbb{N}$, and   cardinals $\beta,\beta_k,\alpha,\alpha_0^k,\alpha_1^k,\dots (k\in \mathbb{N})$.    
\end{theorem}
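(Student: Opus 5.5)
The plan is to combine Proposition \ref{prop:connected} with Theorem \ref{thm:hom con class}, grouping the connected components by the size of their cyclic part.

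For the forward direction, let $\mathcal{A}$ be ultrahomogeneous and write $\mathcal{A}=\sum_{\mathcal{C}\in\mathfrak{C}(\mathcal{A})}\mathcal{C}$. By Proposition \ref{prop:connected}, every component $\mathcal{C}$ is ultrahomogeneous. Theorem \ref{thm:hom con class} then says $\mathcal{C}$ is isomorphic either to some $\mathcal{B}_\alpha$ (when cyc$(\mathcal{C})=\emptyset$) or to some $\mathcal{A}[n;\alpha_0,\alpha_1,\dots]$ with $n=|\text{cyc}(\mathcal{C})|$. The second clause of Proposition \ref{prop:connected} says that components with equal $|\text{cyc}|$ are isomorphic.
\begin{itemize}
\item All acyclic components ($|\text{cyc}|=0$) are therefore isomorphic to a single $\mathcal{B}_\alpha$. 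Let $\beta$ be their number; $\beta=0$ is allowed, in which case $\alpha$ is irrelevant.
\item For each $n\in\mathbb{N}$ that occurs as $|\text{cyc}(\mathcal{C})|$, all components with $n$ cyclic elements are isomorphic to one $\mathcal{A}[n;\alpha^n_0,\alpha^n_1,\dots]$. Let $\beta_n$ be their number.
\end{itemize}
Enumerate the occurring values of $n$ as distinct $n_1,n_2,\dots$; if only finitely many occur, pad with $\beta_k=0$. Then $\mathcal{A}$ is isomorphic to the displayed sum, by Corollary \ref{cor: iso con} applied componentwise.

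For the converse, take an algebra of the displayed form. Each component is $\mathcal{B}_\alpha$ or some $\mathcal{A}[n_k;\dots]$, so it is ultrahomogeneous by Theorem \ref{thm:hom con class}. Two components with equal $|\text{cyc}|$ are handled as follows.
\begin{itemize}
\item If $|\text{cyc}|=0$, both lie in the $\beta\cdot\mathcal{B}_\alpha$ summand, since every $\mathcal{A}[n;\dots]$ has $n\ge 1$ cyclic elements.
\item If $|\text{cyc}|=n_k>0$, both lie in the same summand $\beta_k\cdot\mathcal{A}[n_k;\dots]$, because the $n_k$ are distinct.
\end{itemize}
In either case the two components are isomorphic, so Proposition \ref{prop:connected} gives ultrahomogeneity.

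The only point needing care is the bookkeeping. The distinctness of the $n_k$ is exactly what makes the second condition of Proposition \ref{prop:connected} hold. We also need to confirm that $\mathcal{B}_\alpha$ has no cyclic elements and that $\mathcal{A}[n;\dots]$ has exactly $n$ cyclic elements, and both follow from the definitions. Zero multiplicities must be allowed for the statement to cover algebras with no acyclic components or only finitely many cycle sizes. I expect no real obstacle: the proof is a direct assembly of Theorem \ref{thm:hom con class} and Proposition \ref{prop:connected}.
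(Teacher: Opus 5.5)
Your proposal is correct and matches the paper's argument, which obtains the theorem directly by combining Theorem \ref{thm:hom con class} with Proposition \ref{prop:connected}, grouping components by $|\text{cyc}|$. Your bookkeeping (allowing zero multiplicities, padding the list of $n_k$, and using distinctness of the $n_k$ to secure the isomorphism condition of Proposition \ref{prop:connected}) spells out what the paper leaves implicit.
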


\begin{corollary}\label{cor:unct homog} There exists $2^\omega$ countable connected ultrahomogeneous monounary algebras, up to isomorphism.  
\end{corollary}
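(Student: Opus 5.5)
We prove the corollary by combining Theorem \ref{thm:hom con class} with the isomorphism criterion for the algebras $\mathcal{A}[n;\alpha_0,\alpha_1,\dots]$ stated just before it. The lower bound comes from an explicit injection from $2^\omega$ into countable connected ultrahomogeneous algebras; the upper bound is a counting argument.

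For the lower bound, fix $n=1$. To each infinite sequence $s=(s_0,s_1,\dots)\in\{1,2\}^\omega$ we associate $\mathcal{A}_s=\mathcal{A}[1;s_0,s_1,s_2,\dots]$. We first check that $\mathcal{A}_s$ is countable. Each element of height $k$ has finitely many preimages, so there are only finitely many elements of each height, and $\mathcal{A}_s$ is a countable union of finite sets. Since every $\alpha_k$ is nonzero, every height $k\in\omega$ is realised, so the list $s_0,s_1,\dots$ genuinely is the sequence of parameters, with no truncation at a finite stage. By Theorem \ref{thm:hom con class}, each $\mathcal{A}_s$ is connected and ultrahomogeneous. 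By the criterion that $\mathcal{A}[n;\alpha_0,\dots]\cong\mathcal{A}[m;\beta_0,\dots]$ iff $n=m$ and $\alpha_k=\beta_k$ for all $k$, the map $s\mapsto\mathcal{A}_s$ is injective on isomorphism types. This gives at least $2^\omega$ pairwise non-isomorphic examples.

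For the upper bound, any countable structure is isomorphic to one with domain contained in $\omega$. A unary function on a subset of $\omega$ is a subset of $\omega\times\omega$, and there are only $2^\omega$ such subsets. Hence there are at most $2^\omega$ countable monounary algebras up to isomorphism. The same count can be read off Theorem \ref{thm:hom con class} directly: the countable connected ultrahomogeneous algebras are $\mathcal{B}_\alpha$ with $\alpha\le\omega$ and $\mathcal{A}[n;\alpha_0,\dots]$ with $n\in\mathbb{N}$ and a countable sequence of cardinals $\alpha_i\le\omega$, and there are at most $2^\omega$ such parameter choices.

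The only point that needs care is the countability and well-definedness of $\mathcal{A}_s$, that is, that the notation builds an algebra in which all heights $k$ actually occur. This follows because all the $\alpha_k$ are positive: we construct the algebra level by level, attaching $s_k$ preimages to each element of height $k$.
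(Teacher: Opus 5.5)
Your proof is correct and follows essentially the same route as the paper. The paper takes $\mathcal{A}_I=\mathcal{A}[1;k_1,k_2,\dots]$ for subsets $I=\{k_1,k_2,\dots\}\subseteq\mathbb{N}$ and appeals to the isomorphism criterion for the algebras $\mathcal{A}[n;\alpha_0,\alpha_1,\dots]$. Indexing by $\{1,2\}^\omega$ and spelling out the (trivial) upper bound, which the paper leaves implicit, are only cosmetic refinements.
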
  

\begin{proof}
For each $I=\{k_1,k_2,\dots\} \subseteq \mathbb{N}$, consider the connected monounary algebra $\mathcal{A}_I=\mathcal{A}[1;k_1,k_2,\dots]$, so that $\mathcal{A}_I\not\cong \mathcal{A}_J$ whenever $I\neq J$.  
\end{proof}

\begin{remark} Recall   that the classes $\mathfrak{F}$ and $\mathfrak{F}_k$ $(k\geq 1)$, defined at the start of the section, were shown to be  Fra\"iss\'e classes  in \text{\cite{Sokic}}. By Theorem \ref{thm:hom class} we may determine their Fra\"iss\'e limits $\mathbb{F}$ and $\mathbb{F}_k$, respectively, as follows:
\begin{align*}
& \mathbb{F}\cong    \sum_{n\in \mathbb{N}} \omega \cdot \mathcal{A}[n;\omega, \omega,\dots], \quad \mathbb{F}_k \cong  \sum_{n\in \mathbb{N}} \omega \cdot \mathcal{A}[n;k-1,k,k,\dots].
\end{align*}
 \end{remark} 
 
 \subsection{Links to other variants of homogeneity} 

\subsubsection{Homogeneity}
The 1- and 2-homogeneity of monounary algebras were described in \cite{1hom, 2-hom}. 

\begin{theorem}
       A   monounary algebra $\mathcal{A}$   is homogeneous if and only if there exists a cycle-free monounary algebra $\mathcal{A}_1$ and a locally finite ultrahomogeneous monounary algebra $\mathcal{A}_2$  such that $\mathcal{A}\cong \mathcal{A}_1 + \mathcal{A}_2$. 
\end{theorem}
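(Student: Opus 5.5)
The plan is to split $\mathcal{A}$ according to whether its connected components contain cyclic elements, and to show that homogeneity only involves the components that do. Let $\mathcal{A}_1$ be the sum of those $\mathcal{B}\in\mathfrak{C}(\mathcal{A})$ with cyc$(\mathcal{B})=\emptyset$, and let $\mathcal{A}_2$ be the sum of the remaining components. Then $\mathcal{A}=\mathcal{A}_1+\mathcal{A}_2$ and $\mathcal{A}_1$ is cycle-free.

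\textbf{Step 1: where the finite subalgebras live.} A finite subalgebra of any monounary algebra contains, for each of its elements $x$, the whole of $\langle x\rangle$. So every element of a finite subalgebra has finite height, since otherwise $\langle x\rangle\cong\mathcal{N}$ is infinite. Hence no finite subalgebra meets $\mathcal{A}_1$, and the finite subalgebras of $\mathcal{A}$ are exactly the finite subalgebras of $\mathcal{A}_2$.

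\textbf{Step 2: $\mathcal{A}_2$ is locally finite.} If $x\in\mathcal{A}_2$ has height $k$, then by the description in the proof of Lemma \ref{lemma:cyclic stuff} we have $\langle x\rangle=\{x,\dots,\theta^{k-1}(x)\}\cup$cyc$(\mathcal{B})$, where $\mathcal{B}$ is the component of $x$. This set is finite because cycles are finite. A finitely generated subalgebra is a finite union of such $\langle x\rangle$, so it is finite. Consequently the finite subalgebras of $\mathcal{A}_2$ coincide with its finitely generated subalgebras, and homogeneity of $\mathcal{A}_2$ is the same as ultrahomogeneity of $\mathcal{A}_2$.

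\textbf{Step 3: automorphisms split over the decomposition.} By Lemma \ref{lemma: iso con}, automorphisms of $\mathcal{A}$ map components to isomorphic components. In particular they preserve whether a component contains cycles, so every automorphism of $\mathcal{A}$ restricts to automorphisms of $\mathcal{A}_1$ and of $\mathcal{A}_2$. Conversely, by Corollary \ref{cor: iso con}, any automorphism of $\mathcal{A}_2$ extends to one of $\mathcal{A}$ by taking the identity on $\mathcal{A}_1$.

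\textbf{Step 4: both directions.} Combining Steps 1--3, an isomorphism between finite subalgebras of $\mathcal{A}$ is an isomorphism between finitely generated subalgebras of $\mathcal{A}_2$. It extends to an automorphism of $\mathcal{A}$ if and only if it extends to an automorphism of $\mathcal{A}_2$. Hence $\mathcal{A}$ is homogeneous if and only if $\mathcal{A}_2$ is ultrahomogeneous, which gives the forward direction.

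For the converse, suppose $\mathcal{A}\cong\mathcal{A}_1+\mathcal{A}_2$ with $\mathcal{A}_1$ cycle-free and $\mathcal{A}_2$ locally finite and ultrahomogeneous. Local finiteness forces every $\langle x\rangle$ with $x\in\mathcal{A}_2$ to be finite. So every element of $\mathcal{A}_2$ has finite height, and every component of $\mathcal{A}_2$ contains a cycle. Therefore this decomposition agrees with the canonical one above, and Steps 1--4 apply to it.

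I expect no serious obstacle; the argument is routine bookkeeping. The only points needing care are in Step 3: that automorphisms cannot send a cyclic component into $\mathcal{A}_1$, which holds because they preserve cyc, and that "substructure of size $n$" means a subalgebra, so partial subalgebras play no role here.
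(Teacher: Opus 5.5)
Your proposal is correct and takes essentially the same route as the paper. The paper likewise splits $\mathcal{A}$ into the elements of infinite height (your cycle-free components) and a locally finite remainder $\mathcal{A}_2$, notes that finite subalgebras avoid the former and that automorphisms of $\mathcal{A}_2$ extend by fixing $\mathcal{A}_1$ pointwise, and then uses that homogeneity and ultrahomogeneity coincide for locally finite algebras; you additionally spell out two points the paper leaves implicit, namely that automorphisms of $\mathcal{A}$ restrict to $\mathcal{A}_2$ and that any decomposition as in the statement coincides with the canonical one.
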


\begin{proof}
    Every monounary algebra $\mathcal{A}$ can be written as $\mathcal{A}_1 + \mathcal{A}_2$, where $\mathcal{A}_1$ is the subalgebra of elements of infinite height  and $\mathcal{A}_2$ is locally finite. Since a finite subalgebra intersects trivially with $\mathcal{A}_1$, and since any automorphism of $\mathcal{A}_2$ extends to an automorphism of $\mathcal{A}$ (by pointwise fixing $\mathcal{A}_1$),  it follows that $\mathcal{A}$ is homogeneous if and only if $\mathcal{A}_2$ is. Since homogeneity and ultrahomogeneity are equivalent conditions for locally finite structures, the result follows. 
\end{proof}

We note that the locally finite ultrahomogeneous monounary algebras are simply those appearing in   Theorem \ref{thm:hom class} with $\beta=0$. 

\subsubsection{Partial homogeneity}

The partial 1- and 2-homogeneity of monounary algebras were described in \cite{1hom, partial2hom}.  
 We will call a structure \textit{partially homogeneous} if it is partially $n$-homogeneous for each $n\in \mathbb{N}$. 
  Note that substructures of the relational form $\mathcal{A}^*$  of $\mathcal{A}$ correspond to partial subalgebras of $\mathcal{A}$, and moreover the isomorphisms between finite substructures of $\mathcal{A}^*$ are the same as the isomorphisms between finite partial subalgebras of $\mathcal{A}$. It follows that $\mathcal{A}^*$ is $n$-homogeneous if and only if $\mathcal{A}$ is partially $n$-homogeneous.
 
 \begin{theorem}\label{thm:hom relationalform}
 Let $\mathcal{A}$ be a monounary algebra. Then the following are equivalent: 
 \begin{enumerate} 
  \item[$(1)$] $\mathcal{A}$ is  partially homogeneous;
  \item[$(2)$] $\mathcal{A}$ is partially 1- and 2-homogeneous;
 \item[$(3)$]  $\mathcal{A}^*$ is ultrahomogeneous; 
 \item[$(4)$] $\mathcal{A}^*$ is 2-ultrahomogeneous;
 \item[$(5)$]     $\mathcal{A}$ is isomorphic to either
 \begin{itemize}
 \item[$(\mathrm{i})$] $\alpha \cdot \mathcal{Z}_1 + \beta \cdot \mathcal{Z}_2$  for some cardinals $\alpha,\beta$;   
 \item[$(\mathrm{ii})$] $\alpha \cdot \mathcal{Z}_1 + \beta \cdot \mathcal{Z}_3$ for some cardinals $\alpha,\beta$;  
 \item[$(\mathrm{iii})$] $\alpha \cdot \mathcal{Z}_1 + \mathcal{Z}_4$  for some cardinal $\alpha$;  
 \item[$(\mathrm{iv})$] $\alpha \cdot \mathcal{A}[1;1]$  for some cardinal $\alpha$; 
 \item[$(\mathrm{v})$] $\mathcal{A}[1;\alpha]$  for some cardinal $\alpha$.  
 \end{itemize}
\end{enumerate}
In particular, if $\mathcal{A}^*$ is ultrahomogeneous then $\mathcal{A}$ is ultrahomogeneous. 
 \end{theorem}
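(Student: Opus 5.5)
The plan is to first dispose of the formal equivalences, then prove $(4)\Rightarrow(5)$ by a structural analysis, and finally close the cycle with $(5)\Rightarrow(3)$.

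\emph{Formal equivalences.} By the remark preceding the theorem, $\mathcal{A}^*$ is $n$-homogeneous if and only if $\mathcal{A}$ is partially $n$-homogeneous. Since $\mathcal{A}^*$ is relational, an $n$-generated substructure is just a substructure of size at most $n$. Hence $n$-ultrahomogeneity of $\mathcal{A}^*$ is the same as $k$-homogeneity of $\mathcal{A}^*$ for all $k\le n$. This gives $(1)\Leftrightarrow(3)$ and $(2)\Leftrightarrow(4)$ at once. The implications $(1)\Rightarrow(2)$ and $(3)\Rightarrow(4)$ are trivial. It therefore remains to show $(4)\Rightarrow(5)$ and $(5)\Rightarrow(3)$. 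The final ``in particular'' claim then follows by observing that each algebra in (i)--(v) has the form given in Theorem \ref{thm:hom class}: cycles are $\mathcal{A}[n;\,]$ with no preimage data, and (iv), (v) are sums of copies of a single $\mathcal{A}[1;\alpha]$.

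\emph{$(4)\Rightarrow(5)$, reductions.} Using 1-homogeneity of $\mathcal{A}^*$, there are only two one-point substructures up to isomorphism, a loop or a bare vertex. So all fixed points form one orbit, and all non-fixed points form one orbit. Consequently either every non-fixed point is cyclic of a common period $n\ge 2$, or every non-fixed point is acyclic. In the acyclic case there are two subcases. Either every non-fixed point maps to a non-fixed point, so $\mathcal{A}$ has no fixed points in those components and all heights there are infinite. Or every non-fixed point maps to a fixed point, so the height is at most $1$. I then use the 2-point substructures to eliminate cases.

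\emph{$(4)\Rightarrow(5)$, case analysis.}
\begin{itemize}
\item Infinite height. The non-edge pair $(x,\theta^2(x))$ and its reversal $(\theta^2(x),x)$ are isomorphic 2-element substructures. An automorphism $g$ realising the swap gives $x=g(\theta^2(x))=\theta^2(g(x))=\theta^4(x)$, contradicting acyclicity.
\item Cycles of period $n\ge 5$. The map $x\mapsto x$, $\theta^2(x)\mapsto\theta^3(x)$ is an isomorphism of non-edge pairs. It cannot extend, because automorphisms restrict to rotations of the cycle.
\item Cycles of period $n=4$. If there are two such cycles, the non-edge pair $(x,\theta^2(x))$ in one cycle and a non-edge pair split across two cycles cannot lie in a common orbit, since automorphisms preserve connectivity (Lemma \ref{lemma: iso con}).
\item Height at most $1$. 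All fixed points lie in one orbit, so each has the same number $\alpha$ of non-fixed preimages. If $\alpha\ge 2$ and there are at least two fixed points, compare a pair of siblings with a pair of non-fixed points lying over distinct fixed points. This forces $\alpha\le1$, giving (iv), or exactly one fixed point, giving (v). The case $\alpha=0$ falls under (i).
\end{itemize}
What survives is exactly the list (i)--(v), since fixed points adjoined to $2$-, $3$- or $4$-cycles are harmless isolated loops.

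\emph{$(5)\Rightarrow(3)$ and the main obstacle.} For this direction I would verify directly that every partial isomorphism between finite substructures of $\mathcal{A}^*$ extends, for each of the five families. For (i)--(iii), a finite substructure is a union of loops and pieces of the cycles. In the $2$- and $3$-cycles every induced piece determines its rotation, and the single $4$-cycle needs a check on non-adjacent pairs, which are swapped by the rotation by $2$. For (iv) and (v), extend by permuting the fibres of the fixed points arbitrarily. I expect the main obstacle to be this exhaustive but routine bookkeeping, especially the $\mathcal{Z}_4$ case and making sure the loops $\alpha\cdot\mathcal{Z}_1$ never interact badly. The key step to get right is the case analysis in $(4)\Rightarrow(5)$.
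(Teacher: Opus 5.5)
Your proof is correct, but for the key implication it takes a different route from the paper.

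\textbf{The paper's route.} It proves $(2)\Rightarrow(5)$ by citing the published classification of partially 2-homogeneous monounary algebras \cite[Theorem 4.8]{partial2hom}. That classification returns the list (i)--(v) together with the extra family $\mathcal{A}[1;1,\alpha]$. The paper then discards this family because it is not partially 1-homogeneous \cite[Lemma 2.4]{1hom}. For $(5)\Rightarrow(1)$ it uses that each listed algebra is ultrahomogeneous by Theorem~\ref{thm:hom class}. It then only sketches why isomorphic finite partial subalgebras extend to isomorphic finitely generated subalgebras.

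\textbf{Your route.} You derive the list from scratch. 1-homogeneity of $\mathcal{A}^*$ splits $A$ into a fixed-point orbit and a non-fixed-point orbit. This already rules out mixed shapes such as $\mathcal{A}[1;1,\alpha]$, whose points of height $1$ and $2$ are both non-fixed but lie in different orbits. Four explicit two-point obstructions then eliminate everything else:
\begin{itemize}
\item the swap of $x$ and $\theta^2(x)$ in infinite height;
\item $\{x,\theta^2(x)\}$ versus $\{x,\theta^3(x)\}$ in cycles of length at least $5$;
\item a same-cycle versus a cross-cycle non-edge when there are two copies of $\mathcal{Z}_4$;
\item siblings versus non-fixed points over distinct fixed points.
\end{itemize}
I checked each of these, and your case split (cyclic of common period; acyclic mapping to non-fixed points; acyclic mapping to fixed points) is exhaustive.

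\textbf{Comparison.} Your approach gives a self-contained proof that shows exactly which 1- and 2-point configurations force the list. The cost is doing the extension bookkeeping in $(5)\Rightarrow(3)$ by hand. The paper's proof is shorter on the page but rests entirely on external classifications.

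\textbf{Points to make explicit in $(5)\Rightarrow(3)$.}
\begin{itemize}
\item The induced correspondence between components meeting the two finite substructures is a bijection that preserves which part of each component is present. This holds because edges and non-edges are preserved in both directions. For example, any two points of one $2$- or $3$-cycle are adjacent, and in (iv) a leaf whose root is absent cannot be sent to a leaf whose root is present.
\item The finite partial bijections on fixed points, or on components, extend to full bijections because their complements have equal cardinality.
\end{itemize}
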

 
 \begin{proof}
 $(1) \Rightarrow (3)$ Since $\mathcal{A}^*$ is a relational structure, and thus locally finite, it is ultrahomogeneous if and only if it is $n$-homogeneous for each $n$. The result then follows by the note above. 
 
 (3) $\Rightarrow$ (4) Immediate. 

 (4) $\Rightarrow$ (2) The 2-generated substructures of $\mathcal{A}^*$ correspond to partial subalgebras of $\mathcal{A}$ of size 1 or 2, and so the result again follows by the note above. 
 
 (2) $\Rightarrow$ (5)  
 By \cite[Theorem 4.8]{partial2hom}   $\mathcal{A}$ is partially 2-homogeneous if and only if it is isomorphic to   either one of (i)-(v)  or $\mathcal{D}_\alpha=\mathcal{A}[1;1,\alpha]$ for some cardinal $\alpha$. 
However, $\mathcal{D}_\alpha$ is not partially 1-homogeneous by \cite[Lemma 2.4]{1hom}. 

(5) $\Rightarrow$ (1)  
Note that each case is ultrahomogeneous by Theorem \ref{thm:hom class}, so it suffices to show that isomorphic finite partial subalgebras which are not subalgebra extend (in $\mathcal{A}$) to isomorphic finitely generated subalgebra of $\mathcal{A}$. 
This is a simple exercise for cases (i)-(iii). 
In case (v), the partial subalgebra that are not subalgebra are collections of leafs, and are isomorphic if and only if they are of the same size; by adding the 1-cycle we obtain isomorphic subalgebra. Case (iv) is proven similarly. 
 \end{proof}
 
Unlike in Theorem \ref{thm:mainhom}, we cannot replace condition (2) by 1-ultrahomogeneity. For example, $\mathcal{Z}_n^*$ is transitive  and hence 1-ultrahomogeneous for any $n\in \mathcal{A}$, but for $n>4$ it is not 2-ultrahomogeneous.  
 
Recall that there are uncountably many countable ultrahomogeneous monounary algebras by Corollary \ref{cor:unct homog}. 
On the other hand, as a consequence to Theorem \ref{thm:hom relationalform}   there exists only countably many countable ultrahomogeneous relational forms of monounary algebras.
 This will be contextualised later in Theorem \ref{thm:ctbl unary}.

\subsubsection{The poset of  variants of homogeneity }

To summarise our work on various concepts of homogeneity of monounary algebras, we use the following notation: 
\begin{itemize}
    \item $\mathcal{UH}$ ($\mathcal{UH}_n$) - the class of all ($n$-)ultrahomogeneous monounary algebras 
    \item $\mathcal{H}$ ($\mathcal{H}_n$) - the class of all ($n$-)homogeneous monounary algebras 
    \item $\mathcal{PH}$ ($\mathcal{PH}_n$) - the class of all partially ($n$-)homogeneous monounary algebras 
    \item $\mathcal{T}$ - the class of all transitive monounary algebras. 
\end{itemize}

\begin{theorem} \label{thm: lattice of conditions} We have 
\begin{center}
\begin{tikzpicture}
 \node (a) at (-3,0) {$(\dagger)$}; 
  \node (max) at (0,0) {$\mathcal{T} \subsetneq \mathcal{PH} = \mathcal{PH}_1 \cap \mathcal{PH}_2$};
   \node (u) at (1.95,0.25) {$\udsubseteq$};
      \node (v) at (1.95,-0.25) {$\ddsubseteq$};
 \node (g) at (2.45,-0.5) {$\mathcal{PH}_1$};
\node (h) at (2.45,0.5) {$\mathcal{PH}_2$};
  \node (w) at (2.9,0.25) {$\ddsubseteq$};
      \node (x) at (2.9,-0.25) {$\udsubseteq$};
 \node (i) at (5.35,0) {$\mathcal{UH} = \mathcal{UH}_1  \subsetneq \mathcal{H} \subsetneq \mathcal{H}_2 \subsetneq \mathcal{H}_1$. };
\end{tikzpicture}
\end{center}
Moreover, 
\begin{itemize}
    \item[ $(\mathrm{i})$] $\mathcal{H}_{n-1} \nsubseteq \mathcal{H}_n$ and $\mathcal{H}_{n+1} \nsubseteq \mathcal{H}_{n}$ for each $n>1$. 
    \item[$(\mathrm{ii})$] $\mathcal{PH}_2\subsetneq \mathcal{PH}_n$ and $\mathcal{PH}_{n} \nsubseteq \mathcal{UH}$ for each $n>2$. 
    \item[$(\mathrm{iii})$]  $\mathcal{PH}_1\nsubseteq\mathcal{PH}_n$ and   $\mathcal{PH}_n\nsubseteq\mathcal{PH}_1$ for each $n>1$. 
\end{itemize}  
\end{theorem}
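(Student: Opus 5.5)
The plan is to prove each inclusion and equality in $(\dagger)$ from results already in the paper, and then exhibit explicit separating examples for the strict inclusions and the non-inclusions in (i)--(iii).

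\textbf{The equalities and easy inclusions.}
\begin{itemize}
\item $\mathcal{UH}=\mathcal{UH}_1$ is Proposition \ref{prop:early hom iff 1hom}.
\item $\mathcal{PH}=\mathcal{PH}_1\cap\mathcal{PH}_2$ is the equivalence (1)$\Leftrightarrow$(2) of Theorem \ref{thm:hom relationalform}. The containments $\mathcal{PH}\subseteq\mathcal{PH}_1,\mathcal{PH}_2$ are trivial.
\item $\mathcal{PH}_1,\mathcal{PH}_2\subseteq\mathcal{UH}$ is the key point. Here I would use the classifications cited from \cite{1hom,partial2hom}. The partially 2-homogeneous algebras are (i)--(v) of Theorem \ref{thm:hom relationalform} together with $\mathcal{D}_\alpha=\mathcal{A}[1;1,\alpha]$, and all of these are ultrahomogeneous by Theorem \ref{thm:hom class}. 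For $\mathcal{PH}_1$ I would read off the list in \cite{1hom} and check each member against Theorem \ref{thm:mainhom}. I expect this check to be the main obstacle, since it depends on the precise form of that external classification. The natural alternative is a direct argument: if $\langle x\rangle\cong\langle y\rangle$, then $\{x\}\mapsto\{y\}$ is a partial isomorphism, provided $x$ and $y$ are both fixed points or both not, and the resulting automorphism gives the conditions of Theorem \ref{thm:mainhom}. Some care is needed because $\{x\}$ as a partial subalgebra only records whether $\theta(x)=x$. So I would rely on the classification.
\item $\mathcal{UH}\subseteq\mathcal{H}$ holds because finite subalgebras are finitely generated.
\item $\mathcal{H}\subseteq\mathcal{H}_2\subseteq\mathcal{H}_1$: the first inclusion is by definition. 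For $\mathcal{H}_2\subseteq\mathcal{H}_1$ I would invoke the classifications in \cite{1hom,2-hom}.
\item $\mathcal{T}\subseteq\mathcal{PH}$: by \cite{monohom}, transitive monounary algebras are $\mathcal{B}_\alpha$ or $\alpha\cdot\mathcal{Z}_n$. One must check which of these lie in $\mathcal{PH}$. The list in Theorem \ref{thm:hom relationalform}(5) contains $\alpha\cdot\mathcal{Z}_1$, $\beta\cdot\mathcal{Z}_2$, $\beta\cdot\mathcal{Z}_3$ and $\mathcal{Z}_4$. $\mathcal{B}_\alpha$ has no finite partial subalgebras apart from sets with a partial order structure, so this requires more work. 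I would verify the statement against the intended reading of $\mathcal{T}$ and, where needed, argue $\mathcal{B}_\alpha\in\mathcal{PH}$ directly via tree homogeneity (Lemma \ref{lemma:inductive built partials}).
\end{itemize}

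\textbf{Strictness and non-inclusions.} These come from explicit examples.
\begin{itemize}
\item $\mathcal{T}\neq\mathcal{PH}$: take $\mathcal{Z}_1+\mathcal{Z}_2$.
\item $\mathcal{PH}_1\neq\mathcal{PH}_2$, in both directions: $\mathcal{D}_\alpha\in\mathcal{PH}_2\setminus\mathcal{PH}_1$, and an element of $\mathcal{PH}_1\setminus\mathcal{PH}_2$ can be taken from \cite{1hom}.
\item $\mathcal{UH}\neq\mathcal{H}$: take $\mathcal{N}$, which is homogeneous but not ultrahomogeneous.
\item $\mathcal{H}\neq\mathcal{H}_2$ and $\mathcal{H}_2\neq\mathcal{H}_1$: take suitable locally finite examples built from cycles and trees of height one with uneven fibres.
\item For (i): to get $\mathcal{H}_{n-1}\nsubseteq\mathcal{H}_n$ and $\mathcal{H}_{n+1}\nsubseteq\mathcal{H}_n$, I would use algebras whose only finite subalgebras have sizes controlled by cycle lengths. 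Examples are $\mathcal{Z}_{n-1}+\mathcal{Z}_n'$-type sums, where the substructures of size $n$ or $n\pm1$ exist only for a prescribed size.
\item For (ii) and (iii): use $\mathcal{Z}_m^*$ and $\mathcal{Z}_m$ for $m>4$, which are partially $n$-homogeneous for $n>2$ but fail 2-homogeneity. Also use small finite examples.
\end{itemize}
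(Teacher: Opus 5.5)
\textbf{Main gap: the inclusion $\mathcal{T}\subseteq\mathcal{PH}$ is false.} So it cannot be proved, and your fallback of showing $\mathcal{B}_\alpha\in\mathcal{PH}$ ``via tree homogeneity'' must fail.

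\emph{Counterexamples.}
\begin{itemize}
\item $\mathcal{B}_\alpha$ is cycle-free, so $\{x,\theta^2(x)\}$ and $\{x,\theta^3(x)\}$ are edgeless two-element partial subalgebras. Fixing $x$ and sending $\theta^2(x)\mapsto\theta^3(x)$ is therefore a partial isomorphism. It does not extend, since an automorphism fixing $x$ fixes every $\theta^k(x)$.
\item $\mathcal{Z}_5$ is transitive, yet no rotation swaps the edgeless pair $\{0,2\}$. The paper itself uses $\mathcal{Z}_5\in\mathcal{PH}_1\setminus\mathcal{PH}$ in this very proof.
\item $2\cdot\mathcal{Z}_4$ is transitive, but an edgeless pair inside one copy is isomorphic to a pair meeting both copies.
\end{itemize}
You already had the relevant data: the list in Theorem \ref{thm:hom relationalform}(5) contains no $\mathcal{B}_\alpha$, no $\mathcal{Z}_n$ with $n\geq 5$, and at most one copy of $\mathcal{Z}_4$. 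You should have concluded $\mathcal{T}\nsubseteq\mathcal{PH}$ rather than trying to force the inclusion. The paper's one-line justification (``immediate from'' the classification of transitive algebras and Theorem \ref{thm:hom relationalform}) overlooks exactly this, as does the corollary in the introduction.

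\emph{What is true.} $\mathcal{T}\subseteq\mathcal{UH}$ holds. Every isomorphism $\phi\colon\langle x\rangle\to\langle y\rangle$ is determined by $\phi(x)$, so in a transitive algebra any automorphism sending $x$ to $\phi(x)$ extends $\phi$; this gives 1-ultrahomogeneity. Together with $\mathcal{Z}_1+\mathcal{Z}_2\in\mathcal{PH}\setminus\mathcal{T}$, this shows $\mathcal{T}$ and $\mathcal{PH}$ are incomparable. The same argument settles your hesitation about $\mathcal{PH}_1\subseteq\mathcal{UH}$. Since $\phi(x)$ is a fixed point iff $x$ is, $x\mapsto\phi(x)$ is an isomorphism of one-element partial subalgebras, and any automorphism extending it extends $\phi$. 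No external classification is needed.

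\textbf{Secondary gaps in (i)--(iii).} Elsewhere your plan follows the paper: classification-based inclusions and the witnesses $\mathcal{Z}_1+\mathcal{Z}_2$, $\mathcal{N}$, $\mathcal{D}_\alpha$. But the separating examples for (i)--(iii) are unspecified or wrong, and one inclusion is missing.
\begin{itemize}
\item You never prove $\mathcal{PH}_2\subseteq\mathcal{PH}_n$ in (ii). The paper obtains it because the only partially 2-homogeneous algebras outside $\mathcal{PH}$ are the $\mathcal{A}[1;1,\alpha]$. There, for $n\geq 3$, an isomorphism between $n$-element partial subalgebras must fix the two non-leaves and so only permutes leaves, and every permutation of the leaves is an automorphism.
\item Cycles are ultrahomogeneous (Lemma \ref{lem:cyclichom}), so no $\mathcal{Z}_m$ can witness $\mathcal{PH}_n\nsubseteq\mathcal{UH}$.
\item $\mathcal{Z}_m^*$ is a digraph, not a member of these classes.
\item It is false that $\mathcal{Z}_m$ with $m>4$ is partially $n$-homogeneous for every $n>2$. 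The algebra $\mathcal{Z}_{2n+1}$ contains the edgeless set $\{0,2,\dots,2n-2\}$, and transposing $0$ and $2$ is realised by no rotation (it would be a rotation of order $2$ in a cyclic group of odd order). This is precisely the paper's witness for $\mathcal{PH}_1\nsubseteq\mathcal{PH}_n$.
\item Sums of cycles such as $\mathcal{Z}_{n-1}+\mathcal{Z}_n$ are ultrahomogeneous, so they cannot serve in (i).
\end{itemize}

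\emph{The missing device.} Let $\mathcal{Z}_m^1$ be the $m$-cycle with one extra element mapped to $0$. Its automorphism group is trivial, its subalgebras have size $m$ or $m+1$, and it is its own unique $(m+1)$-element partial subalgebra. Hence:
\begin{itemize}
\item $\mathcal{Z}_n^1\in(\mathcal{H}_{n-1}\cap\mathcal{H}_{n+1})\setminus\mathcal{H}_n$, which gives (i);
\item $\mathcal{Z}_{n-1}^1\in\mathcal{PH}_n\setminus(\mathcal{PH}_2\cup\mathcal{UH})$, which gives (ii) (swap the extra element with the edgeless partner $1$, or rotate the cycle);
\item $\mathcal{Z}_2^1\in\mathcal{H}_1\setminus\mathcal{H}_2$ and $\mathcal{Z}_3^1\in\mathcal{H}_2\setminus\mathcal{H}$, which give the strict inclusions in $(\dagger)$ you left open.
\end{itemize}
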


\begin{proof} For each $n\in \mathbb{N}$, we   let $\mathcal{Z}_n^1$ denote the monounary algebra $\mathcal{Z}_n \cup \{x\}$ where $x$ is the unique acyclic element, with $x\theta =0$. 

(i) $\mathcal{H}_{n-1} \nsubseteq \mathcal{H}_n$: 
 For each $n>1$ we have $\mathcal{Z}_n^1\notin \mathcal{H}_n$, while as $\mathcal{Z}_n^1$ contains no subalgebra of size $n-1$ we trivially have   $\mathcal{Z}_n^1\in \mathcal{H}_{n-1}$. 

$\mathcal{H}_{n+1} \nsubseteq \mathcal{H}_n$: For each $n>1$  we have $\mathcal{Z}_n^1 \in \mathcal{H}_{n+1} \setminus \mathcal{H}_n$. 

(ii)  $\mathcal{PH}_2\subsetneq \mathcal{PH}_n$: We recall that a partially 2-homogeneous monounary algebra which is not partially homogeneous is isomorphic to some $\mathcal{A}[1;1,\alpha]$. It is a simple exercise to show that this is partially $n$-homogeneous for each $n>2$. Moreover, 
 $\mathcal{Z}_{n-1}^1\in \mathcal{PH}_n \setminus \mathcal{PH}_2$.

 $\mathcal{PH}_{n} \nsubseteq \mathcal{UH}$: Similarly, $\mathcal{Z}_{n-1}^1\in \mathcal{PH}_n \setminus \mathcal{UH}$.     

(iii) We have $\mathcal{Z}_{2n+1}\in \mathcal{PH}_1  \setminus \mathcal{PH}_n$  since it contains an $n$-element antichain, and $\mathcal{A}[1;1,\alpha]\in \mathcal{PH}_n \setminus \mathcal{PH}_1$. 

$\mathcal{T} \subsetneq \mathcal{PH}$: Inclusion is immediate from \cite{monohom} and Theorem \ref{thm:hom relationalform}, while $\mathcal{Z}_1 + \mathcal{Z}_2\in \mathcal{PH}\setminus \mathcal{T}$. 

$\mathcal{PH} = \mathcal{PH}_1 \cap \mathcal{PH}_2 \subsetneq \mathcal{PH}_2$: Follows from Theorem \ref{thm:hom relationalform} and its proof. 

$\mathcal{PH}  \subsetneq \mathcal{PH}_1$: Inclusion is immediate, while $\mathcal{Z}_5\in \mathcal{PH}_1 \setminus \mathcal{PH}$. 

$\mathcal{PH}_k  \subsetneq \mathcal{UH}$ ($k=1,2$): Inclusion follows from \cite[Theorem 2.8]{1hom}, \cite[Theorem 4.8]{partial2hom}, and Theorem \ref{thm:hom class}. Moreover, $\mathcal{Z}_2 + \mathcal{Z}_3 \in \mathcal{UH} \setminus (\mathcal{PH}_1 \cup \mathcal{PH}_2)$. 

$\mathcal{UH}=\mathcal{UH}_1 \subsetneq \mathcal{H}$: The first equality is from Theorem \ref{thm:mainhom}. Any cycle-free monounary algebra is automomatically in $\mathcal{H}$, but not necessarily  in $\mathcal{UH}$, for example $(\mathbb{N};\text{Suc})$.  

$\mathcal{H} \subsetneq \mathcal{H}_2$: Inclusion is immediate, while $\mathcal{Z}_3^1\in \mathcal{H}_2 \setminus \mathcal{H}$. 

  $\mathcal{H}_2 \subsetneq \mathcal{H}_1$: Inclusion follows from  \cite[Theorem 5.6]{2-hom} and \cite[Theorem 5.6]{1hom}.  Moreover, $\mathcal{Z}_2^1\in \mathcal{H}_1 \setminus \mathcal{H}_2$. 
\end{proof}

Note that each of the classes in $(\dagger)$ have now been described, while a full description of $\mathcal{H}_n$ and $\mathcal{PH}_n$ for $n>2$ is outside of the scope of this paper.

\subsubsection{Ultrahomogeneous directed pseudoforests}

Recall that a digraph is a directed pseudoforest if and only if it is the relational form  of a partial monounary algebra  without 1-cycles. We end this section by giving a simple classification of ultrahomogeneous directed pseudoforests in arbitrary cardinalities; a description of all homogeneous countable digraphs is given by G. Cherlin   in \cite{CherlinDigraph}. 

\begin{lemma}\label{lemma:partial hom} Let $\mathcal{A}=(A;\theta)$ be a partial monounary algebra in which dom$(\theta)\neq A$. Then $\mathcal{A}^*$ is ultrahomogeneous if and only if dom$(\theta)=\{x:\theta(x)=x\}$. 
\end{lemma}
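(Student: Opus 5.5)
The plan is to treat the relational form $\mathcal{A}^*=(A;R_\theta)$ directly as a digraph. Its $n$-generated substructures are just induced subdigraphs on at most $n$ vertices, since $\mathcal{A}^*$ is relational. Every vertex has out-degree at most one, and out-degree exactly one precisely when it lies in dom$(\theta)$. A vertex carries a loop exactly when $\theta(x)=x$. Both directions then reduce to comparing out-degrees and loops, which automorphisms of $\mathcal{A}^*$ must preserve.

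($\Rightarrow$) Suppose $\mathcal{A}^*$ is ultrahomogeneous, and suppose for contradiction that some $x\in$ dom$(\theta)$ has $\theta(x)\neq x$. By hypothesis dom$(\theta)\neq A$, so we may choose $z\in A\setminus$ dom$(\theta)$. The one-point induced substructures on $\{x\}$ and on $\{z\}$ each consist of a single vertex with no loop:
\begin{itemize}
\item $x$ has no loop because $\theta(x)\neq x$;
\item $z$ has no loop because $z\notin$ dom$(\theta)$.
\end{itemize}
Hence the map $x\mapsto z$ is an isomorphism between $1$-generated substructures. By ultrahomogeneity it extends to an automorphism of $\mathcal{A}^*$. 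But $x$ has out-degree $1$ in $\mathcal{A}^*$ (the edge to $\theta(x)$) while $z$ has out-degree $0$, and automorphisms preserve out-degree. This contradiction shows dom$(\theta)=\{x:\theta(x)=x\}$.

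($\Leftarrow$) If dom$(\theta)=\{x:\theta(x)=x\}$, then the only edges of $\mathcal{A}^*$ are loops. Thus $A$ splits as $L\cup N$, where $L$ is the set of looped vertices and $N$ the set of unlooped ones. Any permutation of $A$ preserving $L$ and $N$ setwise is an automorphism. Now let $f\colon F\to F'$ be an isomorphism between finite substructures. It maps $F\cap L$ onto $F'\cap L$ and $F\cap N$ onto $F'\cap N$. Since these pieces are finite, $|L\setminus F|=|L\setminus F'|$ and $|N\setminus F|=|N\setminus F'|$; this holds both when $L$, $N$ are finite (by counting) and when they are infinite (by cardinal arithmetic). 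We may therefore extend $f$ by bijections $L\setminus F\to L\setminus F'$ and $N\setminus F\to N\setminus F'$, obtaining an automorphism. Since $\mathcal{A}^*$ is relational and hence locally finite, this gives ultrahomogeneity.

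I expect no real obstacle in either direction. The only point needing care is the cardinality bookkeeping in the extension step, which is the routine observation that removing a finite set preserves cardinality of infinite sets and equal finite counts of finite ones.
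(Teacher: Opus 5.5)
Your proof is correct and follows the same route as the paper. In the forward direction you compare the one-point substructures on an element outside dom$(\theta)$ and on a non-fixed element of dom$(\theta)$, which no automorphism can swap. In the backward direction you use that $\mathcal{A}^*$ is just looped and unlooped isolated vertices, and you spell out the cardinality bookkeeping for the extension that the paper calls ``clearly ultrahomogeneous''.
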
 
 
 \begin{proof} ($\Rightarrow$) 
 If $x,y\in A$ with $x\notin$dom($\theta$) and $\theta(y)\neq y$, then $\{x\}$ and $\{y\}$ are isomorphic substructures of $\mathcal{A}^*$, but no automorphism can map $x$ to $y$. 
 
 ($\Leftarrow$) Since $\mathcal{A}^*$ is the union of loops and a null digraph (a digraph with no edges), it is clearly ultrahomogeneous.  
 \end{proof}
 
Our desired classification then follows immediately from  Theorem \ref{thm:hom relationalform} and Lemma \ref{lemma:partial hom}. 
 
 \begin{theorem} Let $\mathcal{M}$ be a directed pseudoforest. Then $\mathcal{M}$ is ultrahomogeneous if and only if $\mathcal{M}=\mathcal{A}^*$ where $\mathcal{A}$ is a partial monounary algebra  isomorphic to either 
 \begin{enumerate}
 \item[$(\mathrm{i})$] $\alpha\cdot \mathcal{Z}_2$ for some cardinal $\alpha$; 
  \item[$(\mathrm{ii})$] $\alpha \cdot \mathcal{Z}_3$ for some cardinal $\alpha$; 
   \item[$(\mathrm{iii})$] $ \mathcal{Z}_4$; 
   \item[$(\mathrm{iv})$] $(A;\theta)$ with dom$(\theta)=\emptyset$. 
\end{enumerate}  
 \end{theorem}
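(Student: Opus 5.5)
The plan is to split the directed pseudoforest $\mathcal{M}$ according to whether it is the relational form of a total or a strictly partial monounary algebra, and to handle each case with one earlier result. By the remarks in the subsection on the relational form, $\mathcal{M}=\mathcal{A}^*$ for some partial monounary algebra $\mathcal{A}=(A;\theta)$ with no 1-cycles, i.e. with no $x$ satisfying $\theta(x)=x$. The two cases are therefore dom$(\theta)=A$ and dom$(\theta)\neq A$.

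\emph{Case dom$(\theta)=A$.} Here $\mathcal{A}$ is a monounary algebra, and $\mathcal{A}^*$ ultrahomogeneous means, by Theorem \ref{thm:hom relationalform} ((3)$\Leftrightarrow$(5)), that $\mathcal{A}$ is one of the forms (i)--(v) listed there. Since $\mathcal{A}$ has no 1-cycles, we discard every form that contains a copy of $\mathcal{Z}_1$, and we also discard $\mathcal{A}[1;\ldots]$, whose cycle is a 1-cycle. Forcing $\alpha=0$ in cases (i)--(iii) leaves $\beta\cdot\mathcal{Z}_2$, $\beta\cdot\mathcal{Z}_3$ and $\mathcal{Z}_4$, and cases (iv) and (v) disappear entirely. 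These survivors are exactly items (i)--(iii) of the statement.

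\emph{Case dom$(\theta)\neq A$.} Lemma \ref{lemma:partial hom} says that $\mathcal{A}^*$ is ultrahomogeneous exactly when dom$(\theta)=\{x:\theta(x)=x\}$. Because $\mathcal{A}$ has no 1-cycles, this set is empty, so dom$(\theta)=\emptyset$ and $\mathcal{M}$ is a null digraph. This is item (iv).

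\emph{Converse.} Each algebra in the list is either of the form in Theorem \ref{thm:hom relationalform}(5) or has empty domain. By that theorem and Lemma \ref{lemma:partial hom} its relational form is ultrahomogeneous, and it contains no loops, so it is a directed pseudoforest. The only delicate points are the degenerate cardinals: $\alpha=0$ in (i)/(ii) gives the empty structure, which is harmless or excluded by the nonempty-domain convention. There is no real obstacle; the work is just the bookkeeping of discarding the forms that contain 1-cycles.
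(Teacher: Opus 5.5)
Your proposal is correct and matches the paper's route. The paper says only that the classification follows immediately from Theorem~\ref{thm:hom relationalform} and Lemma~\ref{lemma:partial hom}; your split on whether dom$(\theta)=A$, and your removal of every form containing a $1$-cycle, is the bookkeeping that ``immediately'' leaves implicit.
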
 
 
\section{$\omega$-categorical monounary algebras} 

 Recall from Corollary \ref{cor:catisULF} that every $\omega$-categorical structure is ULF, and in particular locally finite. 
 To achieve our aim of describing $\omega$-categorical monounary  algebras, we therefore start by giving simple classifications of locally finite and ULF monounary algebras.
 Unlike for $\omega$-categoricity  we do not bound the cardinality of our algebra. 

\begin{lemma}\label{lemma:LF} A monounary algebra is locally finite if and only if every element has finite height.
\end{lemma}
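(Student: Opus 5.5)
The plan is to prove the two directions separately. In both, the key fact is that a finitely generated subalgebra is a finite union of 1-generated ones, since $\langle x_1,\dots,x_n\rangle=\bigcup_i\langle x_i\rangle$. So local finiteness is equivalent to every 1-generated subalgebra $\langle x\rangle$ being finite. The task therefore reduces to the claim that $\langle x\rangle$ is finite if and only if ht$(x)$ is finite.

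For the reverse direction, suppose ht$(x)=n<\infty$. As in the proof of Lemma \ref{lemma:cyclic stuff}, we have
\[\langle x\rangle=\{x,\theta(x),\dots,\theta^{n-1}(x)\}\cup \text{cyc}(\mathcal{B}),\]
where $\mathcal{B}$ is the connected component containing $x$. It remains to see that cyc$(\mathcal{B})$ is finite. Since $c=\theta^n(x)$ is cyclic, $\theta^m(c)=c$ for some $m\ge 1$, so the orbit $\{c,\theta(c),\dots,\theta^{m-1}(c)\}$ is finite. Every element of cyc$(\mathcal{B})$ lies in this orbit: any cyclic $d$ connected to $c$ satisfies $\theta^p(d)=\theta^q(c)$ for some $p,q$. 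Applying further powers of $\theta$ that return $d$ to itself shows that $d$ is in the forward orbit of $c$. Hence $\langle x\rangle$ is finite, and by the opening reduction the whole algebra is locally finite.

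For the forward direction, suppose ht$(x)=\infty$. Then no $\theta^k(x)$ is cyclic. I will show that the elements $\theta^k(x)$ for $k\in\omega$ are pairwise distinct. If $\theta^i(x)=\theta^j(x)$ with $i<j$, then $\theta^{j-i}(\theta^i(x))=\theta^i(x)$, so $\theta^i(x)$ is cyclic, which is a contradiction. Thus $\langle x\rangle$ is infinite; in fact $\langle x\rangle\cong\mathcal{N}$, as already noted in the paper. So $\mathcal{A}$ is not locally finite.

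No step here is a real obstacle. The only point needing care is the finiteness of cyc$(\mathcal{B})$ in the reverse direction, which is handled by the orbit argument above.
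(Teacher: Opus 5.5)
Your proof is correct and follows the paper's argument: the paper likewise writes $\langle B\rangle=\bigcup_{b\in B}\langle b\rangle$ and reduces to the equivalence ``$\langle b\rangle$ is finite if and only if ht$(b)$ is finite''. The paper leaves that equivalence implicit, and you verify it in both directions. (For the finite-height case there is a shorter route: $\theta^k(x)=\theta^{k-n}(c)$ for $k\ge n$ lies in the finite forward orbit of $c$, so you need not pass through cyc$(\mathcal{B})$.)
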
 

\begin{proof} Let $\mathcal{A}$ be a monounary algebra with finite subset $B$.
 Then $\langle B\rangle = \bigcup_{b\in B} \langle b \rangle$ is finite if and only if $\langle b \rangle$ is finite for each $b\in B$, if and only if ht$(b)$ is finite for each $b\in B$.  
\end{proof}

\begin{theorem}\label{thm:ULF} Let $\mathcal{A}$ be a monounary algebra. Then the following are equivalent: 
\begin{itemize}
\item[$(1)$] $\mathcal{A}$ is ULF; 
\item[$(2)$] the 1-generated subalgebras of $\mathcal{A}$ are finite, and there are  only finitely many, up to isomorphism; 
\item[$(3)$] both ht$(\mathcal{A})$ and $\{|\text{cyc}(\mathcal{B})|: \mathcal{B}\in \mathfrak{C}(\mathcal{A})\}$ are finite. 
\end{itemize}
\end{theorem}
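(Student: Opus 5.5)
We prove the cycle of implications $(1)\Rightarrow(2)\Rightarrow(3)\Rightarrow(1)$. The tools are Lemma~\ref{lemma:LF}, which says local finiteness is equivalent to every element having finite height, and Lemma~\ref{lemma:cyclic stuff}, which says the isomorphism type of $\langle x\rangle$ is determined by the pair $(\text{ht}(x),|\text{cyc}(\mathcal{B})|)$, where $\mathcal{B}$ is the component containing $x$.

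For $(1)\Rightarrow(2)$, suppose $\mathcal{A}$ is ULF with bounding function $f$. Then every 1-generated subalgebra has at most $f(1)$ elements, so in particular it is finite. There are only finitely many monounary algebras of size at most $f(1)$ up to isomorphism: on an $m$-element set there are at most $m^m$ unary operations. Hence there are only finitely many isomorphism types of 1-generated subalgebras.

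For $(2)\Rightarrow(3)$, first note that since each $\langle x\rangle$ is finite, every element has finite height. In the proof of Lemma~\ref{lemma:cyclic stuff} we saw that $\langle x\rangle=\{x,\theta(x),\dots,\theta^{\text{ht}(x)-1}(x)\}\cup\text{cyc}(\mathcal{B})$. It follows that $|\langle x\rangle|=\text{ht}(x)+|\text{cyc}(\mathcal{B})|$. Because there are only finitely many isomorphism types of 1-generated subalgebras, the sizes $|\langle x\rangle|$ are bounded by some $N$. This bounds both $\text{ht}(x)$ over all $x$, so $\text{ht}(\mathcal{A})$ is finite. It also bounds $|\text{cyc}(\mathcal{B})|$ over all components $\mathcal{B}$, using that every component contains an element (indeed a cyclic one, by finiteness of heights). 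Hence the set $\{|\text{cyc}(\mathcal{B})|\}$ is a finite set of natural numbers.

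For $(3)\Rightarrow(1)$, let $h=\text{ht}(\mathcal{A})$ and let $c=\max\{|\text{cyc}(\mathcal{B})|:\mathcal{B}\in\mathfrak{C}(\mathcal{A})\}$, which is finite by hypothesis. By the formula above, $|\langle x\rangle|\le h+c$ for every $x\in A$. As in the proof of Lemma~\ref{lemma:LF}, $\langle x_1,\dots,x_n\rangle=\bigcup_i\langle x_i\rangle$. So $f(n)=n(h+c)$ is a uniform bound, and $\mathcal{A}$ is ULF.

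The only mildly delicate step is the counting in $(2)\Rightarrow(3)$: one must make sure that finiteness of the set of isomorphism types really bounds both the heights and the cycle sizes. This follows directly because $\langle x\rangle$ determines $\text{ht}(x)$ and $|\text{cyc}(\mathcal{B})|$ (Lemma~\ref{lemma:cyclic stuff}).
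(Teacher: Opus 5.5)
Your proof is correct and takes essentially the same approach as the paper. The paper also treats $(1)\Rightarrow(2)$ as immediate, obtains $(2)\Leftrightarrow(3)$ from Lemma~\ref{lemma:cyclic stuff}, and proves $(3)\Rightarrow(1)$ with the same bound $n(h+m)$; your cyclic ordering of the implications and the explicit size formula $|\langle x\rangle| = \text{ht}(x) + |\text{cyc}(\mathcal{B})|$ just spell out what the paper leaves as immediate.
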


\begin{proof} (2) $\Leftrightarrow$ (3) Immediate from Lemma \ref{lemma:cyclic stuff}. 

(1) $\Rightarrow$ (2) Immediate. 

(3) $\Rightarrow$ (1) Let ht$(\mathcal{A})=h$ and $m=$max$\{|\text{cyc}(\mathcal{B})|: \mathcal{B}\in \mathfrak{C}(\mathcal{A})\}$, both which are finite by our hypothesis. Then for each $a\in A$ we have $|\langle a \rangle|\leq h + m$. 
Hence any $n$-generated subalegra, being the union of $n$ 1-generated subalgebras, has size at most $n(h+m)$.
\end{proof}

Recall that for any monounary algebra $\mathcal{A}$, the relational form $\mathcal{A}^*$ is ULF  since it has a finite relational language. 
 On the other hand, as $\Aut(\mathcal{A})= \Aut(\mathcal{A}^*)$, it follows immediately from the RNT that for $\omega$-categoricity we may equivalently consider the relational form: 

\begin{corollary} A countable monounary algebra $\mathcal{A}$ is $\omega$-categorical if and only if $\mathcal{A}^*$ is $\omega$-categorical. 
\end{corollary}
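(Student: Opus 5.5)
The statement is an immediate consequence of the Ryll-Nardzewski Theorem together with the equality $\Aut(\mathcal{A})=\Aut(\mathcal{A}^*)$ recorded in the subsection on the relational form. The plan is to compare orbit counts. Since $\mathcal{A}$ and $\mathcal{A}^*$ have the same domain $A$ and the same automorphism group acting on it, the coordinatewise actions on $A^n$ coincide for every $n\in\mathbb{N}$. Hence $o_n(\mathcal{A})=o_n(\mathcal{A}^*)$ for all $n$, so $\Aut(\mathcal{A})$ is oligomorphic exactly when $\Aut(\mathcal{A}^*)$ is.

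The key steps, in order, are as follows.

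\begin{enumerate}
\item Confirm that $\Aut(\mathcal{A})=\Aut(\mathcal{A}^*)$. A bijection $g$ of $A$ satisfies $g(\theta(x))=\theta(g(x))$ for all $x$ if and only if, for all $x,y$, we have $(x,y)\in R_\theta \Leftrightarrow (g(x),g(y))\in R_\theta$.
\begin{itemize}
\item The forward implication of this equivalence is immediate.
\item For the converse, apply the relational condition to the pair $(x,\theta(x))$. This gives $(g(x),g(\theta(x)))\in R_\theta$, that is, $g(\theta(x))=\theta(g(x))$.
\end{itemize}
\item Note that both structures are countable exactly when $A$ is countable, so the Ryll-Nardzewski Theorem applies to both.
\item Apply the Ryll-Nardzewski Theorem to $\mathcal{A}$ and to $\mathcal{A}^*$ and chain the equivalences through oligomorphicity of the common automorphism group.
\end{enumerate}

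Alternatively, one can avoid the Ryll-Nardzewski Theorem by using interdefinability. $\mathcal{A}$ and $\mathcal{A}^*$ define each other by quantifier-free formulas, namely $R_\theta(x,y)\leftrightarrow \theta(x)=y$. Consequently any countable model of $\mathrm{Th}(\mathcal{A}^*)$ can be translated into a countable model of $\mathrm{Th}(\mathcal{A})$ and back. One must check that the translated relation is functional, which holds because $\forall x\exists! y\, R_\theta(x,y)$ is in $\mathrm{Th}(\mathcal{A}^*)$. I would only mention this route, since the orbit argument is shorter.

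No step is a real obstacle. The only point needing care is the converse inclusion $\Aut(\mathcal{A}^*)\subseteq\Aut(\mathcal{A})$ in step 1, which uses that $R_\theta$ is the graph of a total function.
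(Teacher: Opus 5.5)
Your proposal is correct and matches the paper's argument: the paper deduces the corollary immediately from $\Aut(\mathcal{A})=\Aut(\mathcal{A}^*)$ together with the Ryll-Nardzewski Theorem. Your step 1 simply spells out the equality of automorphism groups, which the paper takes directly from the interdefinability of $\mathcal{A}$ and $\mathcal{A}^*$.
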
 

Unfortunately, there exists no complete picture of  the $\omega$-categoricity of digraphs. 
We will instead later  construct from $\mathcal{A}$ a collection of relational structures in which a classification of $\omega$-categoricity does exist, namely semilinear orders. 

While a partial monounary algebra $\mathcal{A}$ is not a   first order structure  if dom$(\theta)\neq A$, we may break convention by saying $\mathcal{A}$ is \textit{$\omega$-categorical} if $\Aut(\mathcal{A})$ is oligomorphic or, equivalently, if the digraph $\mathcal{A}^*$ is $\omega$-categorical. 

\begin{lemma}\label{lemma:preserve} Let $\mathcal{A}$ be an $\omega$-categorical monounary algebra. Suppose $\mathcal{B}$ is a partial monounary subalgebra  in which there exists $x\in B$ in which $B$ is  preserved by all automorphisms of $\mathcal{A}$ which fix $x$. Then $\mathcal{B}$ is $\omega$-categorical. 
\end{lemma}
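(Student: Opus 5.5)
We must show that $\Aut(\mathcal{B})$ is oligomorphic on $B$. Let $G=\Aut(\mathcal{A})$ and let $G_x$ be the stabiliser of $x$. The plan is to compare $n$-orbits of $\Aut(\mathcal{B})$ with $n$-orbits of $G_x$ on $B^n$.

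\textbf{Step 1 (restriction map).} By hypothesis every $g\in G_x$ satisfies $g(B)=B$; applied to $g^{-1}\in G_x$ as well, $g|_B$ is a bijection of $B$. Since $g$ commutes with $\theta$ on $A$, for $b\in \text{dom}(\theta_B)$ we have $\theta(b)\in B$ and $g(\theta(b))=\theta(g(b))\in B$. Hence $g(b)\in\text{dom}(\theta_B)$, and the same argument with $g^{-1}$ gives the converse. So $g|_B$ is an automorphism of the partial algebra $\mathcal{B}$ (equivalently, of the digraph $\mathcal{B}^*$). Thus restriction gives a map $G_x\to \Aut(\mathcal{B})$. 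Consequently each $n$-orbit of $\Aut(\mathcal{B})$ on $B^n$ is a union of $n$-orbits of $G_x$ on $B^n$, and
\[ o_n(\Aut(\mathcal{B}))\le o_n(G_x \text{ acting on } B^n). \]

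\textbf{Step 2 (finiteness for the stabiliser).} Two tuples $\underline{b},\underline{b}'\in B^n$ lie in the same $G_x$-orbit if and only if $(x,\underline{b})$ and $(x,\underline{b}')$ lie in the same $(n+1)$-orbit of $G$. Indeed, if $g(x,\underline{b})=(x,\underline{b}')$ then $g\in G_x$, and the converse is immediate. Hence the number of $G_x$-orbits on $B^n$ is at most $o_{n+1}(\mathcal{A})$, which is finite by the Ryll-Nardzewski Theorem since $\mathcal{A}$ is $\omega$-categorical. Combining with Step 1 shows $o_n(\Aut(\mathcal{B}))$ is finite for all $n$, so $\Aut(\mathcal{B})$ is oligomorphic.

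\textbf{Step 3 (countability and bookkeeping).} $B\subseteq A$ is countable. If $B$ is finite, oligomorphicity is automatic. Under the convention adopted just before the lemma, oligomorphicity of $\Aut(\mathcal{B})$ is exactly what it means for $\mathcal{B}$ to be $\omega$-categorical, and it is equivalent to $\mathcal{B}^*$ being $\omega$-categorical by the Ryll-Nardzewski Theorem applied to the countable digraph $\mathcal{B}^*$.

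The only point needing care is Step 1: checking that restriction preserves the domain of the partial operation in both directions. This is why $B$ must be preserved setwise, and why we use both $g$ and $g^{-1}$. Everything else is the standard fact that stabilising finitely many points preserves oligomorphicity.
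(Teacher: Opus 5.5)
Your proof is correct and follows the paper's argument: append $x$ to tuples from $B$, observe that any automorphism of $\mathcal{A}$ matching the extended tuples fixes $x$ and hence restricts to an automorphism of $\mathcal{B}$, conclude $o_n(\mathcal{B})\le o_{n+1}(\mathcal{A})$, and finish with the Ryll-Nardzewski Theorem. Your explicit check, using both $g$ and $g^{-1}$, that the restriction preserves $\text{dom}(\theta_B)$ in both directions is a detail the paper leaves implicit.
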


\begin{proof} For each $n$-tuple $\underline{a}$ of elements of $B$, we form an $(n+1)$-tuple  $\underline{a}_x$ of $B$ by adding $x$ in the $(n+1)$th coordinate. Then any $\phi\in\Aut(\mathcal{A})$   with $\underline{a}_x\phi = \underline{b}_x$ must fix $x$, and hence preserves $B$ by our hypothesis. Hence $\phi$ restricts to an automorphism of $\mathcal{B}$ with $\underline{a}\phi = \underline{b}$. We have thus shown that $o_n(\mathcal{B})\leq o_{n+1}(\mathcal{A})$, and the result follows by the RNT.  
\end{proof} 

\begin{proposition} \label{prop:cat conn}  A countable monounary algebra $\mathcal{A}$ is $\omega$-categorical if and only if $\mathfrak{C}(\mathcal{A})$ is finite, up to isomorphism, and each $\mathcal{B}\in \mathfrak{C}(A)$ is $\omega$-categorical. 
\end{proposition}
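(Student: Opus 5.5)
For the forward direction, I will use the Ryll-Nardzewski Theorem together with Lemma \ref{lemma:preserve}. Since $\mathcal{A}$ is $\omega$-categorical, $o_1(\mathcal{A})$ is finite. By Lemma \ref{lemma: iso con} every automorphism maps connected components onto connected components. So if $x$ and $y$ lie in the same 1-orbit, the components containing them are isomorphic. Choosing one element from each component therefore shows that the number of isomorphism types of components is at most $o_1(\mathcal{A})$, which is finite. For a fixed $\mathcal{B}\in\mathfrak{C}(\mathcal{A})$, pick any $x\in B$. Every automorphism of $\mathcal{A}$ fixing $x$ maps $B$ to the component containing $x$, namely $B$ itself, again by Lemma \ref{lemma: iso con}. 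Lemma \ref{lemma:preserve} then gives that $\mathcal{B}$ is $\omega$-categorical. (Here $\mathcal{B}$ is a genuine subalgebra, so $\omega$-categoricity has its usual meaning.)

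For the converse, suppose the components fall into finitely many isomorphism types $\mathcal{B}_1,\dots,\mathcal{B}_r$, each $\omega$-categorical. Let $\kappa_i\leq\omega$ be the number of components of type $\mathcal{B}_i$, so $\mathcal{A}\cong\sum_i \kappa_i\cdot\mathcal{B}_i$. I will bound $o_n(\mathcal{A})$ directly and then apply the RNT. Take an $n$-tuple $\underline{a}=(a_1,\dots,a_n)$ and attach to it the following data:
\begin{enumerate}
\item[(a)] the equivalence relation on $\{1,\dots,n\}$ given by ``$a_j$ and $a_k$ lie in the same component'';
\item[(b)] for each class of this relation, the type $i$ of the component involved;
\item[(c)] for each class, the orbit under $\Aut(\mathcal{B}_i)$ of the corresponding subtuple, after transporting it via a fixed isomorphism onto $\mathcal{B}_i$.
\end{enumerate}
There are finitely many choices for (a) and (b). For (c), note that the orbits of $\Aut(\mathcal{B}_i)$ on tuples of a given length do not depend on the chosen isomorphism, because two isomorphisms differ by an automorphism of $\mathcal{B}_i$. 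Since each $\Aut(\mathcal{B}_i)$ is oligomorphic, there are also finitely many choices for (c).

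It then remains to check that two tuples with the same data lie in the same $n$-orbit of $\mathcal{A}$. Given matching data, the classes of (a) determine a partial bijection between finitely many components of $\mathcal{A}$ that preserves isomorphism type. On each matched pair of components, (c) provides an isomorphism carrying one subtuple to the other: compose the fixed isomorphisms with an automorphism of $\mathcal{B}_i$ realising the orbit equality. Within each type, this partial bijection between finitely many components extends to a bijection of all components of that type, since the remaining sets of components of that type have equal cardinality (both are $\kappa_i$ minus the same finite number). Using arbitrary isomorphisms on the remaining components, Corollary \ref{cor: iso con} assembles these pieces into an automorphism of $\mathcal{A}$. Hence $o_n(\mathcal{A})$ is finite for every $n$, and $\mathcal{A}$ is $\omega$-categorical by the RNT.

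There is no real obstacle. The step needing the most care is the last one: extending the partial bijection of components within each type. It works because only finitely many components are moved, so the complements have equal cardinality whether $\kappa_i$ is finite or countably infinite.
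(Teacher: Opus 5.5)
Your proposal is correct and takes the paper's approach: the forward direction is exactly the paper's argument (bounding the isomorphism types of components by $o_1(\mathcal{A})$ via Lemma~\ref{lemma: iso con}, then applying Lemma~\ref{lemma:preserve}). For the converse, the paper only asserts that a counting argument as in \cite{Quinncat} works and omits it; your orbit count via the data (a)--(c), followed by the type-preserving extension of the finite partial bijection of components and an appeal to Corollary~\ref{cor: iso con}, is such an argument (in the spirit of the relation $\sharp_n$ in the proof of Proposition~\ref{prop:cyclic part}), and it is complete.
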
 

\begin{proof} $(\Rightarrow)$  Let $\mathcal{B},\mathcal{B}'\in \mathfrak{C}(\mathcal{A})$ and fix some $x\in B$ and $x'\in B'$.
 Then any automorphism of $\mathcal{A}$ which maps $x$ to $x'$ must map $\mathcal{B}$ to $\mathcal{B}'$ by Lemma \ref{lemma: iso con}, and so  $\mathfrak{C}(\mathcal{A})$ is bound by the number of 1-orbits of $\mathcal{A}$, up to isomorphism. 
Since $\mathcal{B}$  is preserved by those automorphisms of $\mathcal{A}$ which fix $x$ by Lemma \ref{lemma: iso con},    it follows from Lemma \ref{lemma:preserve} that $\mathcal{B}$   is $\omega$-categorical. 

$(\Leftarrow)$ This can be proven by a counting argument similar  to \cite[Theorem 4.8]{Quinncat}. These methods are simple, but overlong, and as such we skip its proof.  
\end{proof}

As with ultrahomogeneity it therefore suffices to consider the $\omega$-categoricity of connected monounary algebras. 

\begin{lemma} \label{lemma:cat 1type} Let $\mathcal{A}$ be a connected monounary algebra such that  cyc$(\mathcal{A})\neq \emptyset$. Suppose also that $o_1(\mathcal{A})=\kappa$, for some cardinal $\kappa$. Then 
\begin{itemize}
\item[$(\mathrm{i})$] $|\{\text{ht}(x):x\in A\}|\leq \kappa$; 
\item[$(\mathrm{ii})$] there exists at most $\kappa$ many $\mathcal{A}_x$ $(x\in A)$, up to isomorphism. 
\end{itemize} 
\end{lemma}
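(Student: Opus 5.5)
Both parts will come from the fact that automorphisms preserve $\theta$, and hence preserve height and the sets $A_x$. So for each part I will build a map from the set of 1-orbits of $\mathcal{A}$ onto the set of objects being counted. That set then has size at most $o_1(\mathcal{A})=\kappa$.

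For (i), the map sends the orbit of $x$ to ht$(x)$. First I check that it is well defined: if $\phi\in\Aut(\mathcal{A})$ and $\phi(x)=y$, then $\phi(\theta^n(x))=\theta^n(y)$ for every $n$. Also $\phi$ maps cyc$(\mathcal{A})$ onto itself, because an element $z$ is cyclic exactly when $\theta^n(z)=z$ for some $n\in\mathbb{N}$, and this property is preserved by $\phi$ and by $\phi^{-1}$. It follows that $\theta^n(x)$ is cyclic if and only if $\theta^n(y)$ is, so ht$(x)=$ht$(y)$. The map from 1-orbits to $\{\text{ht}(x):x\in A\}$ is therefore well defined and surjective, which gives (i). The hypothesis cyc$(\mathcal{A})\neq\emptyset$ only ensures that all heights are finite; the argument does not otherwise depend on it.

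For (ii), the map sends the orbit of $x$ to the isomorphism type of $\mathcal{A}_x$. I need to show that if $\phi\in\Aut(\mathcal{A})$ with $\phi(x)=y$, then $\phi(A_x)=A_y$ and $\phi|_{A_x}$ is a partial monounary algebra isomorphism onto $\mathcal{A}_y$. Set membership follows from Notation \ref{not:above set}: $a\in A_x$ means $a=x$, or $a$ is acyclic and there is $n\in\mathbb{N}$ with $\theta^n(a)=x$ and $\theta^{n-1}(a)$ acyclic. Applying $\phi$, and using again that $\phi$ preserves cyclicity, shows that $\phi(a)\in A_y$. The same argument applied to $\phi^{-1}$ gives the reverse inclusion, so $\phi(A_x)=A_y$. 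Since $\phi$ commutes with $\theta$, it preserves the partial operation. The one point needing care is the domain: $x$ lies in dom$(\theta_{A_x})$ exactly when $\theta(x)\in A_x$, and this is handled by the same bijection $\phi(A_x)=A_y$. Hence $\mathcal{A}_x\cong\mathcal{A}_y$ whenever $x$ and $y$ lie in the same 1-orbit, and the isomorphism types of the $\mathcal{A}_x$ number at most $\kappa$.

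I expect no step to be a real obstacle. The only slightly delicate point is the bookkeeping that $\phi$ maps $A_x$ exactly onto $A_y$, including the case where $x$ is cyclic and $A_x$ consists of $x$ together with the acyclic branches hanging off it. This reduces entirely to the fact that automorphisms preserve cyc$(\mathcal{A})$, which I will state once at the start and use in both parts.
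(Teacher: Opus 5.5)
Your proposal is correct and follows the same route as the paper: automorphisms preserve height and restrict to isomorphisms $\mathcal{A}_x\to\mathcal{A}_{\phi(x)}$, so both the height and the isomorphism type of $\mathcal{A}_x$ are constant on 1-orbits, which gives the bounds. The only difference is that you write out the routine checks the paper leaves implicit: that automorphisms preserve cyc$(\mathcal{A})$, that $\phi(A_x)=A_y$, and how the domain of the partial operation behaves.
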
 

\begin{proof}   Since cyc$(\mathcal{A})$ is non-empty every element has finite height. 
Let $x\in A$ and $\phi\in \Aut(\mathcal{A})$. 
Then as $\phi$ preserves height we have ht$(x)$=ht$(\phi(x))$, to which (i) follows. 
  Moreover, $\phi$ restricts to an isomorphism from $\mathcal{A}_x$ to $\mathcal{A}_{\phi(x)}$, from which (ii) follows. 
\end{proof}

\begin{proposition} \label{prop:cyclic part}  A countable connected monounary algebra  $\mathcal{A}$ is $\omega$-categorical if and only if cyc$(\mathcal{A})\neq \emptyset$ and $\mathcal{A}_c$ is $\omega$-categorical for each $c\in$ cyc$(\mathcal{A})$. 
\end{proposition}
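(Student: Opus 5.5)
For the forward direction I will argue as follows. If $\mathcal{A}$ is $\omega$-categorical then it is locally finite (Corollary \ref{cor:catisULF}), so every element has finite height by Lemma \ref{lemma:LF}. In particular cyc$(\mathcal{A})\neq\emptyset$. Now fix $c\in$ cyc$(\mathcal{A})$. Any automorphism of $\mathcal{A}$ fixing $c$ preserves cyc$(\mathcal{A})$, and it preserves the condition $\theta^{n}(x)=c$ with $\theta^{n-1}(x)$ acyclic. Hence it preserves $A_c$ setwise. Lemma \ref{lemma:preserve} then shows directly that $\mathcal{A}_c$ is $\omega$-categorical.

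For the converse, assume cyc$(\mathcal{A})=\{c_0,\dots,c_{n-1}\}$ is nonempty, finite since it is a cycle, and that each $\mathcal{A}_{c_i}$ has oligomorphic automorphism group. The aim is to bound $o_k(\mathcal{A})$ using Lemma \ref{lemma:cycles aut}, which describes every automorphism of $\mathcal{A}$ as an automorphism $\pi$ of the $n$-cycle together with isomorphisms $\mathcal{A}_{c}\to\mathcal{A}_{\pi(c)}$. In particular, $\prod_{i}\Aut(\mathcal{A}_{c_i})$ embeds in $\Aut(\mathcal{A})$ by taking $\pi=\mathrm{id}$. This gives a subgroup $G$ of finite... rather, a subgroup whose orbits are easier to count. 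The plan is to show $G$ itself is oligomorphic on $A$, which suffices since $o_k(\mathcal{A})\le o_k(G)$.

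Given a $k$-tuple $\underline{a}$ of $A$, I record three pieces of data. The first is the function $\{1,\dots,k\}\to\{0,\dots,n-1\}$ saying which $A_{c_i}$ contains each $a_j$; there are $n^k$ choices. The second, for each $i$, is the subtuple of entries lying in $A_{c_i}$, of length $k_i\le k$. The third is the $k_i$-orbit of that subtuple under $\Aut(\mathcal{A}_{c_i})$. If two tuples agree on all of this, then applying the componentwise automorphisms in each $A_{c_i}$ gives an element of $G$ mapping one tuple to the other. So $o_k(G)\le n^k\prod_{i}\max_{k_i\le k} o_{k_i}(\mathcal{A}_{c_i})$, which is finite. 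Here we use that $\mathcal{A}_{c_i}$ are partial algebras and "oligomorphic" is meant in the paper's convention; a $0$-length subtuple contributes one orbit. The RNT then gives $\omega$-categoricity, since $\mathcal{A}$ is countable.

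The main obstacle is checking that the product of the automorphisms really is an automorphism of $\mathcal{A}$. That check is exactly Lemma \ref{lemma:cycles aut} with $\pi$ the identity, using that each automorphism of $\mathcal{A}_{c_i}$ fixes $c_i$ (as in the proof of Proposition \ref{prop:iso cut}). The rest is routine counting.
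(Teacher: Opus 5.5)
Your proof is correct and is essentially the paper's: the forward direction uses local finiteness together with Lemma \ref{lemma:preserve}. (The paper uses ULF and Theorem \ref{thm:ULF} to get a non-empty set of cyclic elements; the difference is immaterial.) For the converse, your count of $o_k(G)$ by placement data and sub-orbits, gluing automorphisms via Lemma \ref{lemma:cycles aut} with $\pi$ the identity, is exactly the paper's relation $\sharp_n$, just phrased through the subgroup $\prod_i \Aut(\mathcal{A}_{c_i})$.
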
 

\begin{proof}
$(\Rightarrow)$    By Corollary \ref{cor:catisULF} $\mathcal{A}$ is ULF, and so cyc$(\mathcal{A})\neq \emptyset$  by Theorem \ref{thm:ULF}.
Since  $\mathcal{A}_c$ is preserved by all automorphisms of $\mathcal{A}$ which fix $c$, it is $\omega$-categorical by Lemma \ref{lemma:preserve}. 

$(\Leftarrow)$ Let $|$cyc$(\mathcal{A})|=k$. For each $n\in \mathbb{N}$, define a relation $\sharp_n$ on $A^n$ by 
 $\underline{a}=(a_1,\dots,a_n) \, \sharp_n \, (b_1,\dots,b_n)=\underline{b}$ if and only if 
 \begin{itemize}
 \item there exists $c_1,\dots,c_n\in$cyc$(\mathcal{A})$ with $a_i,b_i\in A_{c_i}$ for each $1\leq i \leq n$; 
 \item for each $c\in$cyc$(\mathcal{A})$, the tuples $\underline{a}_c$ and $\underline{b}_c$ are in the same $n_c$-orbit of $\mathcal{A}_c$, where $\underline{a}_c$ is the $n_c$-subtuple of $\underline{a}$ consisting of entries which lie in $A_c$; similarly for $\underline{b}_c$. 
\end{itemize}  
Since cyc$(\mathcal{A})$ is finite and each $\mathcal{A}_c$ is $\omega$-categorical there exists only finitely many $\sharp_n$-classes. It therefore  suffices to show that each $\sharp_n$-class contains only finitely many $n$-orbits of $\mathcal{A}$. 
 Let $\underline{a}$ and $\underline{b}$ be a pair of $\sharp_n$-related $n$-tuples of $\mathcal{A}$, so that  there exists for each $c\in$cyc$(\mathcal{A})$ an automorphism $\phi_c$  of $\mathcal{A}_c$ mapping $\underline{a}_c$ to $\underline{b}_c$, where if $\underline{a}_c$ and $\underline{b}_c$ are empty then we take $\phi_c$ to be the identity map.
 Then $\phi=\bigcup_{c\in A} \phi_c$  is an automorphism of $\mathcal{A}$ by Lemma \ref{lemma:cycles aut}, and maps $\underline{a}$ to $\underline{b}$ as required.  
\end{proof}

Let $\mathcal{A}=(A;\theta)$ be a connected monounary algebra and $c\in$cyc$(A)$.  We define a partial order on $A_c$ by 
\[ a\geq b \Leftrightarrow \theta^k(a)=b \text{ for some } k\in \omega. 
\] 
Then $(A_c;\leq)$ is a semilinear order with minimum $c$, that is, a partially ordered set in which
\begin{itemize}
\item  $x,y\leq z$ implies $x\leq y$ or $y\leq x$, 
\item for every $x$ and $y$ there exists $z$ with $z\leq x,y$. 
\end{itemize} 
Note also that   $(A_c;\leq)$ forms a meet semilattice, where $x\wedge y=z$ if $z$   is the maximum element such that there exists $p,q\in \omega$ with $\theta^p(x)=\theta^q(y)=z$.  

Given a partially ordered set $(X;\leq)$  and $x, y \in X$, we say that $x$ \textit{covers} $y$ if $x> y$ and there exists no $z\in X$   such that $x>z> y$. Clearly $x$ covers $y$ in $(A_c;\leq)$ if and only if $x\neq y$ and $y=\theta(x)$ in $\mathcal{A}_c$. 

\begin{lemma}\label{lemma:cycle order} Let $\mathcal{A}$ be a  connected monounary algebra with cyc$(\mathcal{A})$ non-empty. 
Then $\Aut(\mathcal{A}_c)  = \Aut(A_c;\leq)$ for  any $c\in$ cyc$(\mathcal{A})$. 
In particular, $\mathcal{A}_c$ is $\omega$-categorical if and only if $(A_c;\leq)$ is $\omega$-categorical.
\end{lemma}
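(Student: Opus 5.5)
The plan is to show that the two structures $\mathcal{A}_c=(A_c;\theta)$ (a partial monounary algebra) and $(A_c;\leq)$ determine each other on the same domain, so that a bijection of $A_c$ preserves one exactly when it preserves the other. The $\omega$-categoricity statement then follows at once: both notions are defined through oligomorphicity of the automorphism group (for the partial algebra by the convention adopted before Lemma \ref{lemma:preserve}, and for the countable poset by the RNT). Equal groups have the same $n$-orbits for every $n$.

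First, for $\Aut(\mathcal{A}_c)\subseteq \Aut(A_c;\leq)$, take an automorphism $\phi$ of the partial algebra $\mathcal{A}_c$. Recall that dom$(\theta_{A_c})=A_c\setminus\{c\}$, since $\theta(c)$ is cyclic and not in $A_c$ unless $c$ is a 1-cycle. In that case $\theta(c)=c$, and $c$ is the unique fixed point, so it is preserved anyway. If $a\geq b$, say $\theta^k(a)=b$, then every intermediate element $\theta^i(a)$ with $i<k$ lies in $A_c$ and differs from $c$ (it lies strictly above $b\geq c$). Hence $\phi(\theta^k(a))=\theta^k(\phi(a))$ by applying the isomorphism condition $k$ times, so $\phi(a)\geq\phi(b)$. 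Applying the same argument to $\phi^{-1}$, which is also an isomorphism, gives the reverse implication.

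Conversely, let $\phi\in\Aut(A_c;\leq)$. I would use the observation already made in the paper that $x$ covers $y$ in $(A_c;\leq)$ if and only if $x\neq y$ and $y=\theta(x)$. Order automorphisms preserve the covering relation, so for $x\in A_c$ with $\theta(x)\in A_c$ and $x\neq\theta(x)$ we get that $\phi(x)$ covers $\phi(\theta(x))$, that is, $\theta(\phi(x))=\phi(\theta(x))$. The only remaining case is the point $c$ itself, and only when $\theta(c)=c$ lies in the domain. There $\phi$ fixes $c$, since $c$ is the minimum of the poset, and so the isomorphism condition holds trivially. The same minimum argument shows that $\phi$ preserves dom$(\theta_{A_c})=A_c\setminus\{c\}$ in the non-loop case, which is the domain part of the isomorphism definition.

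The only real obstacle is this bookkeeping about $c$: whether $\theta(c)\in A_c$, and the domain condition in the definition of an isomorphism of partial algebras. I would handle it by splitting into two cases: $|$cyc$(\mathcal{A})|=1$, where $\theta(c)=c$ and $\theta$ is total on $A_c$, and $|$cyc$(\mathcal{A})|>1$, where dom$=A_c\setminus\{c\}$. In each case, $c$ is characterised as the poset minimum, and also as the unique element outside the domain or the unique fixed point. The final clause is then immediate, as explained in the first paragraph.
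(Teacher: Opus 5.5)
Your proposal is correct and follows the paper's proof. The forward inclusion iterates the homomorphism condition along $\theta^k(a)=b$, and the reverse inclusion uses two facts: covers in $(A_c;\leq)$ are exactly the non-loop $\theta$-edges, and every order automorphism fixes the minimum $c$. Your extra bookkeeping (the domain condition, using $\phi^{-1}$ for the reverse implication, and deriving the final clause directly from equality of the groups rather than via Lemma~\ref{lemma:reduct cat}) only makes explicit what the paper leaves implicit. One small fix: take $k$ minimal when you claim the intermediate points differ from $c$.
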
 

\begin{proof} Let $\phi\in  \Aut(\mathcal{A}_c)$ and let $x,y\in A$. Then, as $\phi$  is a bijection preserving $\theta$,
\begin{align*}
x\geq y  & \Leftrightarrow  \exists k, \theta^k(x)=y   \Leftrightarrow  \exists k, \phi(\theta^k(x))=\phi(y) \\
  &\Leftrightarrow   \exists k, \theta^k(\phi(x)) = \phi(y)  \Leftrightarrow  \phi(x) \geq \phi(y), 
\end{align*}
and so $\phi\in \Aut(A_c;\leq)$. 

Now let $\psi\in \Aut(A_c;\leq)$. Consider first  $x\in$ dom$(\theta|_{A_c}) \setminus \{c\} = A_c\setminus \{c\}$. Then as $x$ covers $\theta(x)$ and as automorphisms of $(A_c;\leq)$ preserve coverings,  we that have $\psi(x)$ covers $\psi(\theta(x))$, and so $\theta(\psi(x))=\psi(\theta(x))$. 
Now suppose $c\in$dom$(\theta|_{A_c})$,  so that $\theta(c)=c$. 
Then as $c$ is the minimum element of $(A_c;\leq)$ we have $\psi(c)=c$, and so $\theta(\psi(c))=\theta(c)=c = \psi(c)=\psi(\theta(c))$, and thus $\psi$ preserves $\theta$ as required. 
The final result is immediate from Lemma \ref{lemma:reduct cat}.  
\end{proof}

 The $\omega$-categoricity of semilinear orders has been classified in \cite{Barham}. The semilinear orders that arise from monounary algebras are of a particularly restricted form; they have finite height (and in particular are \textit{nowhere dense}). 
 This, together with the following theorem, gives our desired classification of the $\omega$-categorical connected monounary algebras. 
 
\begin{theorem} \label{thm:cat iff 1-cat} Let $\mathcal{A}$ be a connected monounary algebra. Then the following are equivalent: 
\begin{itemize} 
\item[(1)] $\mathcal{A}$ is $\omega$-categorical; 
\item[(2)] cyc$(\mathcal{A})$ is non-empty and  $\mathcal{A}_c$ is $\omega$-categorical for each $c\in$ cyc$(\mathcal{A})$;
\item[(3)] cyc$(\mathcal{A})$ is non-empty and $(A_c;\leq)$  is $\omega$-categorical for each $c\in$cyc$(\mathcal{A})$; 
\item[(4)] ht$(\mathcal{A})\in\omega$ and $\{\mathcal{A}_x:x\in A\}$ is finite, up to isomorphism;
\item[(5)] $\mathcal{A}$ is locally finite and  has finitely many 1-orbits.
\end{itemize}
\end{theorem}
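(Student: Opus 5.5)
Throughout I take $\mathcal{A}$ countable whenever (1) is involved, since $\omega$-categoricity is only defined for countable structures. The plan is to prove $(1)\Leftrightarrow(2)\Leftrightarrow(3)$ from earlier results, then close the cycle $(1)\Rightarrow(5)\Rightarrow(4)\Rightarrow(2)$.

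\textbf{The equivalences among (1), (2), (3).}
\begin{itemize}
\item $(1)\Leftrightarrow(2)$ is exactly Proposition \ref{prop:cyclic part}.
\item $(2)\Leftrightarrow(3)$ follows from Lemma \ref{lemma:cycle order}, applied to each $c\in$ cyc$(\mathcal{A})$.
\end{itemize}

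\textbf{The easy implications.}
\begin{itemize}
\item $(1)\Rightarrow(5)$: an $\omega$-categorical structure is locally finite by the remark after the RNT (or Corollary \ref{cor:catisULF}). It has finitely many 1-orbits by the RNT.
\item $(5)\Rightarrow(4)$: by Lemma \ref{lemma:LF}, every element has finite height, so cyc$(\mathcal{A})\neq\emptyset$. Lemma \ref{lemma:cat 1type}, applied with $\kappa=o_1(\mathcal{A})$ finite, gives two things. First, only finitely many heights occur, so ht$(\mathcal{A})\in\omega$. Second, there are only finitely many $\mathcal{A}_x$ up to isomorphism.
\end{itemize}

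\textbf{The main step, $(4)\Rightarrow(2)$.}
Since ht$(\mathcal{A})$ is finite, cyc$(\mathcal{A})$ is non-empty. I will prove the stronger claim that $\mathcal{A}_x$ is $\omega$-categorical (i.e.\ $\Aut(\mathcal{A}_x)$ is oligomorphic) for every $x\in A$. The proof is by induction on $d(x)=\sup\{k: \theta^{-k}(x)\cap A_x\neq\emptyset\}$. This is finite, at most ht$(\mathcal{A})$, because every element of $A_x\setminus\{x\}$ has height greater than ht$(x)$.

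\emph{Base case.} If $d(x)=0$, then $\mathcal{A}_x$ is a single point.

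\emph{Structure of the automorphism group.} For general $x$, group the children $u\in\theta^{-1}(x)\cap A_x$ by the isomorphism type of $\mathcal{A}_u$. By (4) there are only finitely many types $t_1,\dots,t_r$. Let $m_j\le\omega$ be the number of children of type $t_j$. By Lemma \ref{lemma:inductive built partials}, $\Aut(\mathcal{A}_x)$ is the direct product over $j$ of the wreath products $\Aut(\mathcal{A}_{u_j})\,\mathrm{Wr}\,\mathrm{Sym}(m_j)$.

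\emph{Counting orbits.} For an $n$-tuple $\underline{a}$ of $A_x$, record the following data:
\begin{itemize}
\item which coordinates equal $x$;
\item the partition of the remaining coordinates according to which child subtree $A_u$ they lie in;
\item the type $t_j$ of each such block;
\item the $\Aut(\mathcal{A}_u)$-orbit of each block's subtuple, transported to a fixed representative $\mathcal{A}_{u_j}$ via a fixed isomorphism.
\end{itemize}
Two tuples with the same data lie in the same orbit. To see this, choose a permutation of the children that respects types and sends blocks to blocks. Then use automorphisms of the subtrees, which exist by the induction hypothesis, to match the subtuples, and assemble the pieces with Lemma \ref{lemma:inductive built partials}. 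Each child has smaller $d$, so the induction hypothesis gives finitely many orbits per block size. Hence there are finitely many possible data, and $o_n(\mathcal{A}_x)$ is finite.

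\emph{Conclusion.} Applying the claim at $x=c\in$ cyc$(\mathcal{A})$ gives (2).

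\textbf{Expected obstacle.} The step I expect to need the most care is verifying that the recorded data really determines the orbit. This requires the transport to the fixed representatives $\mathcal{A}_{u_j}$ to be made consistently, so that the recorded orbits do not depend on the choice of isomorphisms. One must also check that the bijection of children can be chosen to respect types while sending blocks to blocks, with $m_j$ possibly infinite. This is routine but is where the wreath-product structure is actually used.
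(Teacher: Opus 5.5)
Your proof is correct, but it closes the cycle along a different route from the paper.

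\textbf{The paper's route.} The paper proves $(1)\Rightarrow(4)\Rightarrow(5)\Rightarrow(1)$.
For $(4)\Rightarrow(5)$ it uses Corollary \ref{cor:heightauto}: the 1-orbit of $x$ is determined by its height, its cyclic root $\theta^{\text{ht}(x)}(x)$, and the isomorphism types of the $\mathcal{A}_{\theta^m(x)}$. This bounds $o_1(\mathcal{A})$.
For $(5)\Rightarrow(1)$ it notes that $\mathcal{A}$ is ULF (Theorem \ref{thm:ULF}) and inducts on the tuple length via Corollary \ref{cor:orbit induct}. Thus the $n$-orbit of $(x_1,\dots,x_n)$ is determined by three pieces of data: the isomorphism type of $\langle x_1,\dots,x_n\rangle$, the $(n-1)$-orbit of $(x_1,\dots,x_{n-1})$, and the 1-orbit of $x_n$.

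\textbf{Your route.} You do the real work in $(4)\Rightarrow(2)$, by induction on depth, counting orbits of the iterated wreath product $\Aut(\mathcal{A}_x)$. You then take $(2)\Rightarrow(1)$ from Proposition \ref{prop:cyclic part}.

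\textbf{What each buys.} The paper's route reuses the same extension lemma that gives $\mathcal{UH}=\mathcal{UH}_1$. It also isolates the ``finitely many 1-orbits suffice'' phenomenon, which is then carried over to the non-connected case. Yours is the standard orbit count for finite-depth trees with finitely many branch types. It avoids ULF and Corollary \ref{cor:orbit induct}, and it gives the slightly stronger fact that every $\mathcal{A}_x$, not just $\mathcal{A}_c$, is oligomorphic.

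\textbf{Two small points.} The obstacle you anticipate does not arise. Changing the fixed isomorphism $\mathcal{A}_u\to\mathcal{A}_{u_j}$ composes it with an automorphism of $\mathcal{A}_{u_j}$, so the recorded orbit does not depend on the choice. Also, the subtree isomorphisms matching blocks exist because the recorded orbits coincide, not by the induction hypothesis. The induction hypothesis is needed only to make the set of possible data finite.
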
 

\begin{proof}
(1) $\Leftrightarrow$ (2) Proposition \ref{prop:cyclic part}.

(2) $\Leftrightarrow$ (3) Lemma \ref{lemma:cycle order}.

(1) $\Rightarrow$ (4) Immediate  from Lemma \ref{lemma:cat 1type}   and the RNT. 

(4) $\Rightarrow$ (5) Since the heights of the elements of $\mathcal{A}$ are finite,  $\mathcal{A}$ is locally finite by Lemma \ref{lemma:LF}. 
Let $x,y\in A$ be such that there exists $n\leq$ ht$(\mathcal{A})$ with $\theta^n(x)=\theta^n(y)=c\in$ cyc$(\mathcal{A})$ and such that $\mathcal{A}_{\theta^k(x)}\cong \mathcal{A}_{\theta^k(y)}$ for each $k\geq 0$.
 Since the identity automorphism of $\mathcal{A}$ fixes $c$, there exists by Corollary \ref{cor:heightauto} an automorphism of $\mathcal{A}$ mapping $x$ to $y$. It follows that 
 $$o_1(\mathcal{A})\leq \text{ ht}(\mathcal{A})\cdot |\text{cyc}(\mathcal{A})|\cdot q^{\text{ht}(\mathcal{A})}$$
 where $q$ is the number of distinct $\mathcal{A}_x$, up to isomorphism. This bound is finite by our hypothesis. 

(5) $\Rightarrow$ (1) 
It follows from Lemma \ref{lemma:LF} and Lemma \ref{lemma:cat 1type} (ii) that   ht$(\mathcal{A})\in \omega$. 
Hence $\mathcal{A}$ is ULF by Theorem \ref{thm:ULF}, using the fact that $\mathcal{A}$ is connected. 
Suppose, for some $n\geq 2$, that $o_k(\mathcal{A})$ is finite for all $k<n$.
Since $\mathcal{A}$ is ULF it has only finitely many $n$-generated subalgebras, up to isomorphism. 
 Define an equivalence relation $\sim$ on $A^n$ by  $(x_1,\dots,x_n)\sim (y_1,\dots,y_n)\in A^n$ if and only if
 \begin{itemize}
 \item there exists an isomorphism $\phi\colon \langle x_1,\dots,x_n \rangle\rightarrow \langle y_1,\dots,y_n \rangle$ mapping each $x_i$ to $y_i$; 
 \item $(x_1,\dots,x_{n-1})$ and $(y_1,\dots,y_{n-1})$ are in the same $(n-1)$-orbit;
\item $x_n$ and $y_n$ are in the same 1-orbit. 
\end{itemize}  
Then by our hypothesis there exists only finitely many $\sim$-classes of $A^n$, and moreover by Corollary \ref{cor:orbit induct} any $\sim$-related tuples are in the same $n$-orbit, thus completing the inductive step.  
\end{proof}

The equivalent conditions (2)-(4) above easy translate to the non-connected case by using Proposition \ref{prop:cat conn}. More pleasing, however, is that condition (5) remains unchanged: 

\begin{theorem} A monounary algebra is $\omega$-categorical if and only if it is locally finite and has finitely many 1-orbits. 
\end{theorem}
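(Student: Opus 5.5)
Throughout, $\mathcal{A}$ is a countable monounary algebra, since $\omega$-categoricity is only defined for countable structures. The plan is to reduce the statement to the connected case, Theorem \ref{thm:cat iff 1-cat}, using Proposition \ref{prop:cat conn}.

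For the forward direction, suppose $\mathcal{A}$ is $\omega$-categorical. By Corollary \ref{cor:catisULF} it is ULF, hence locally finite, and by the RNT $o_1(\mathcal{A})$ is finite. Nothing more is needed here.

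For the converse, suppose $\mathcal{A}$ is locally finite with $o_1(\mathcal{A})$ finite. By Proposition \ref{prop:cat conn} it suffices to establish two facts.
\begin{enumerate}
\item[(a)] $\mathfrak{C}(\mathcal{A})$ is finite up to isomorphism. Fix $\mathcal{B},\mathcal{B}'\in\mathfrak{C}(\mathcal{A})$ and elements $x\in B$, $x'\in B'$ lying in the same 1-orbit. Then some automorphism maps $x$ to $x'$, and by Lemma \ref{lemma: iso con} it maps $\mathcal{B}$ onto $\mathcal{B}'$, so $\mathcal{B}\cong\mathcal{B}'$. Since each component contains some element, the number of isomorphism types of components is at most $o_1(\mathcal{A})$.
\item[(b)] Each $\mathcal{B}\in\mathfrak{C}(\mathcal{A})$ is $\omega$-categorical. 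We apply (5)$\Rightarrow$(1) of Theorem \ref{thm:cat iff 1-cat} to $\mathcal{B}$. It is locally finite as a subalgebra of a locally finite algebra (equivalently, all its elements have finite height, by Lemma \ref{lemma:LF}). It remains to bound $o_1(\mathcal{B})$.
\end{enumerate}

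The one point needing care is comparing 1-orbits of $\mathcal{B}$ with those of $\mathcal{A}$. Take $x,y\in B$ in the same 1-orbit of $\mathcal{A}$, witnessed by $\phi\in\Aut(\mathcal{A})$ with $\phi(x)=y$. By Lemma \ref{lemma: iso con}, $\phi(\mathcal{B})=\mathcal{B}$, so $\phi|_B\in\Aut(\mathcal{B})$ maps $x$ to $y$. Hence $o_1(\mathcal{B})\le o_1(\mathcal{A})<\infty$, and Theorem \ref{thm:cat iff 1-cat} gives that $\mathcal{B}$ is $\omega$-categorical. With (a) and (b) in hand, Proposition \ref{prop:cat conn} completes the argument.

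The main obstacle is that the backward direction of Proposition \ref{prop:cat conn} is the step whose proof the paper skipped. If it must be avoided, I would instead repeat the inductive argument of (5)$\Rightarrow$(1) in Theorem \ref{thm:cat iff 1-cat} directly on $\mathcal{A}$. This needs $\mathcal{A}$ to be ULF, which holds by Theorem \ref{thm:ULF} for the following reasons.
\begin{itemize}
\item Heights are bounded, since each 1-orbit has a single height.
\item $\{|\mathrm{cyc}(\mathcal{B})|:\mathcal{B}\in\mathfrak{C}(\mathcal{A})\}$ is finite, since components fall into finitely many isomorphism types by (a).
\end{itemize}
Given ULF, Corollary \ref{cor:orbit induct} shows that the equivalence $\sim$ defined there (isomorphic generated subalgebras, same $(n-1)$-orbit, same 1-orbit) has finitely many classes, each contained in a single $n$-orbit. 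This yields oligomorphicity, and hence $\omega$-categoricity by the RNT.
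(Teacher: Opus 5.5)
Your main argument is correct and is essentially the paper's proof. Lemma \ref{lemma: iso con} bounds both the number of component isomorphism types and each $o_1(\mathcal{B})$ by $o_1(\mathcal{A})$; (5)$\Rightarrow$(1) of Theorem \ref{thm:cat iff 1-cat} then makes each component $\omega$-categorical, and Proposition \ref{prop:cat conn} finishes. Your fallback is also sound. Bounded heights and finitely many cycle sizes give ULF via Theorem \ref{thm:ULF}, and the $\sim$-induction with Corollary \ref{cor:orbit induct} never uses connectedness, so it proves oligomorphicity directly without the skipped direction of Proposition \ref{prop:cat conn}.
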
 

\begin{proof} ($\Rightarrow$) Immediate by the RNT. 

($\Leftarrow$) Let $\mathcal{A}$ be locally finite and with $o_1(\mathcal{A})=k\in \mathbb{N}$. 
Then by the proof of Proposition \ref{prop:cat conn}, $\mathfrak{C}(\mathcal{A})$ contains at most $k$ distinct components, up to isomorphism and $o_1(\mathcal{B})\leq k$ for each $\mathcal{B}\in  \mathfrak{C}(\mathcal{A})$. Hence each connected component is $\omega$-categorical by Theorem \ref{thm:cat iff 1-cat}, from which the result follows by Proposition \ref{prop:cat conn}.  
\end{proof}

It is a simple exercise to show that an ultrahomogeneous connected monounary algebra $\mathcal{A}$ has $o_1(\mathcal{A})=$ ht$(\mathcal{A})+1$.
 As a consequence of the results above and Theorem \ref{thm:hom class} we obtain the following classification of $\omega$-categorical ultrahomogeneous monounary algebras.

\begin{corollary} \label{cor:homog + cat} Let $\mathcal{A}$ be a countable monounary algebra. Then $\mathcal{A}$ is $\omega$-categorical and ultrahomogeneous if and only if  there exists $k\in \mathbb{N}$, $h_1,\dots,h_k\in \mathbb{N}$,  distinct $n_1,n_2,\dots,n_k \in \mathbb{N}$, and  $\beta_i,\alpha_0^i,\dots,\alpha_{h_i}^i\in \omega \cup \{\omega\}$ for each $1\leq i \leq k$, such that  
\[ \mathcal{A} \cong \sum_{1\leq i \leq k} \beta_i\cdot \mathcal{A}[n_i;\alpha^i_0,\alpha^i_1,\dots,\alpha^i_{h_i}].
\]
\end{corollary}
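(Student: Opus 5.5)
We combine Theorem \ref{thm:hom class} with the characterisation just proved: a monounary algebra is $\omega$-categorical if and only if it is locally finite and has finitely many 1-orbits. Both directions reduce to controlling the parameters in the decomposition of Theorem \ref{thm:hom class}.

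($\Rightarrow$) Suppose $\mathcal{A}$ is countable, $\omega$-categorical and ultrahomogeneous, and write it as in Theorem \ref{thm:hom class}. Since $\mathcal{A}$ is locally finite, Lemma \ref{lemma:LF} forces $\beta=0$, because elements of $\mathcal{B}_\alpha$ have infinite height. By Proposition \ref{prop:cat conn} there are only finitely many isomorphism types of components. The algebras $\mathcal{A}[n_k;\dots]$ with distinct $n_k$ are pairwise non-isomorphic, so only finitely many $\beta_k$ are nonzero. After discarding the zero ones and reindexing, we have $k$ summands.

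By Corollary \ref{cor:catisULF} and Theorem \ref{thm:ULF}, $\mathrm{ht}(\mathcal{A})$ is finite. Hence each component $\mathcal{A}[n_i;\alpha^i_0,\alpha^i_1,\dots]$ has finite height $h$. The notation requires the listed cardinals to be nonzero, so the list terminates: it has the form $\alpha^i_0,\dots,\alpha^i_{h_i}$ with $h_i$ determined by the height. Any edge case of height $0$, where the list should be empty, is absorbed by the convention on $h_i$; we should check this matches the intended reading. Finally, countability of $\mathcal{A}$ forces every $\beta_i$ and every $\alpha^i_j$ into $\omega\cup\{\omega\}$.

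($\Leftarrow$) Given $\mathcal{A}$ of the stated form, Theorem \ref{thm:hom class} (with $\beta=0$ and the remaining $\beta_k=0$) shows $\mathcal{A}$ is ultrahomogeneous. Each component has finite height and $n_i$ cyclic elements, so $\mathcal{A}$ is locally finite and countable. It remains to bound the 1-orbits. Within a component $\mathcal{A}[n;\alpha_0,\dots,\alpha_h]$, any two elements of equal height generate isomorphic subalgebras by Lemma \ref{lemma:cyclic stuff}. By 1-ultrahomogeneity they then lie in the same orbit, which gives the remark that such a component has $h+1$ orbits. Components of the same type are swapped by automorphisms via Corollary \ref{cor: iso con}. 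Hence $o_1(\mathcal{A})\le\sum_i(h_i+1)$, which is finite, and the theorem above gives $\omega$-categoricity.

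The main obstacle is bookkeeping rather than mathematics: matching the height of a component with the length of its parameter list, including the edge cases of height $0$ and of a pure cycle.
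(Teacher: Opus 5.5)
Your proposal is correct and follows the paper's own route: combine Theorem \ref{thm:hom class} with the characterisation of $\omega$-categoricity as local finiteness plus finitely many 1-orbits. The $\omega$-categoricity side kills $\beta$ and all but finitely many $\beta_k$ and forces finite height, and conversely the observation that an ultrahomogeneous connected component has $\mathrm{ht}+1$ orbits bounds $o_1(\mathcal{A})$. One small slip: with nonzero parameters, $\mathcal{A}[n;\alpha_0,\dots,\alpha_h]$ has height $h+1$ and hence $h+2$ orbits, so your bound should read $\sum_i(h_i+2)$, which of course is still finite.
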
 

Alternatively, Corollary \ref{cor:homog + cat} provides a description of those monounary algebras which are both  $\omega$-categorical and have \textit{quantifier elimination} (see \cite[Corollary 6.4.2]{Hodges97}). 

\subsection{The number of $\omega$-categorical unary algebras} \label{subsec:stable}

In \cite{Henson72} Henson constructed uncountably many pairwise non-isomorphic $\omega$-categorical  ultrahomogeneous digraphs. 
The corresponding result was also shown for groups \cite{SaracinoWood}, while Rosenstein \cite{Ros73} showed that there exists only  countably many $\omega$-categorical  abelian groups, and in \cite{Ros69} only countably many $\omega$-categorical  linear orders, up to isomorphism.   
The main result of this section is to prove the following: 

\begin{theorem}\label{thm:ctbl unary} The class of $\omega$-categorical  unary algebras is countable, up to isomorphism. 
\end{theorem}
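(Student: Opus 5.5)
The plan is to go through monadic stability, as the section introduction signals. A unary algebra in a finite signature $\{\theta_1,\dots,\theta_m\}$ is interdefinable with its relational form, the structure $(A;R_{\theta_1},\dots,R_{\theta_m})$, where each $R_{\theta_i}$ is the graph of $\theta_i$. The two have the same automorphism group, so by the RNT it suffices to show that only countably many $\omega$-categorical structures of this relational form exist, up to isomorphism. Since the signature is finite and fixed for each $m$, and there are countably many values of $m$, we may fix $m$ throughout.

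\textbf{Step 1: monadic stability.} I will show that every unary algebra has a monadically stable theory. First recall that any theory in a language of unary functions is monadically stable; this is a classical fact, going back to the study of ``mono-unary'' and unary-function theories, and it relies on the structure being tree-like. More concretely, one can argue that expanding by unary predicates preserves the property that the Gaifman graph has bounded ``treewidth-like'' complexity on definable sets. I would quote the known result that structures with only unary functions (equivalently, functional digraphs with coloured edges) are mutually algebraic or monadically NIP-stable. If a direct citation for several unary functions is unavailable, I would fall back on the following: the relational form is a bounded-degree-in-one-direction coloured digraph, and theories of unary functions are known to be monadically stable by Shelah's work on ``theories of unary functions''.

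\textbf{Step 2: counting.} Next I apply the recent theorem of Braunfeld and Laskowski (or Lachlan's earlier result for $\omega$-stable $\omega$-categorical structures, as refined by Braunfeld) on monadically stable $\omega$-categorical structures. It states that such structures in a finite relational language are finitely homogenizable or otherwise finitely axiomatisable modulo a countable amount of data. Consequently, for a finite relational language, there are only countably many monadically stable $\omega$-categorical structures up to isomorphism. Combining this with Step 1 gives, for each $m$, countably many $\omega$-categorical unary algebras with $m$ operations, and a countable union over $m$ finishes the proof.

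The main obstacle is Step 1 together with selecting exactly the right form of the counting theorem, namely one that yields countably many models. If no ready citation exists for monadic stability with several functions, a fallback for the monounary case alone is available. Theorem~\ref{thm:cat iff 1-cat}(4) and Proposition~\ref{prop:cat conn} give finite height and finitely many isomorphism types of $\mathcal{A}_x$. Each $\mathcal{A}_x$ is then determined recursively, from the top height down, by finitely many multiplicities in $\omega\cup\{\omega\}$ of the types of its predecessors. This gives a finite description of $\mathcal{A}$, and hence countably many such $\mathcal{A}$. For several operations I would rely on the cited monadic stability results.
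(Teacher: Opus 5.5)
Your Steps 1--2 follow the paper's own route (Corollary~\ref{cor:unary ms}, then Theorem~\ref{thm:ms fbh}, then Proposition~\ref{prop:ctblfb}). However, Step~1 is false as soon as there are two operations, so no citation can supply it.

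Two unary functions can interpret an arbitrary digraph. For a digraph $D=(V;E)$, put $A=V\sqcup E$, with $\theta_1(v)=\theta_2(v)=v$ for $v\in V$, and $\theta_1(e)=u$, $\theta_2(e)=v$ for $e=(u,v)\in E$. Then $V=\{x:\theta_1(x)=x\}$, and $E$ is defined by $\exists z\,(\theta_1(z)\neq z\wedge\theta_1(z)=x\wedge\theta_2(z)=y)$. Taking $D=(\mathbb{Q};<)$ already gives a unary algebra with the order property, which is not even stable.

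Worse, the statement itself fails in this generality:
restriction to $V$ is an isomorphism $\Aut(\mathcal{A}_D)\to\Aut(D)$;
an $n$-orbit of $\mathcal{A}_D$ is determined by which coordinates lie in $E$ together with an $\Aut(D)$-orbit of a tuple of length at most $2n$, so $\mathcal{A}_D$ is $\omega$-categorical whenever $D$ is;
and $\mathcal{A}_D\cong\mathcal{A}_{D'}$ forces $D\cong D'$.
Applying this to Henson's $2^{\aleph_0}$ pairwise non-isomorphic countable homogeneous digraphs gives $2^{\aleph_0}$ pairwise non-isomorphic $\omega$-categorical unary algebras with two operations. The paper's proof breaks at exactly the same place: Corollary~\ref{cor:unary ms} extends to several operations a fact that holds only for a single unary function.

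For monounary algebras your argument is sound. Step~1 holds there because the Gaifman graph of $\mathcal{A}^*$ has at most one cycle in each component. It therefore contains no subdivision of $K_4$, is superflat, and Podewski--Ziegler applies. Step~2 should then be stated precisely as the theorem of Bodor et al.\ (Theorem~\ref{thm:ms fbh}) followed by the count in Proposition~\ref{prop:ctblfb}, not the loosely attributed result you describe.

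Your fallback is a genuinely different and more elementary proof of this case, avoiding stability theory altogether. By Proposition~\ref{prop:cat conn} and Theorem~\ref{thm:cat iff 1-cat}(4), heights are bounded and there are only finitely many isomorphism types of $\mathcal{A}_x$. By Lemma~\ref{lemma:inductive built partials}, working down from the maximal height, the type of $\mathcal{A}_x$ is coded by the multiplicities in $\omega\cup\{\aleph_0\}$ with which the finitely many types of $\mathcal{A}_u$ occur among $u\in\theta^{-1}(x)\cap A_x$.

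To complete it you must also record two further pieces of data:
for each component type, the cyclic sequence of types of $\mathcal{A}_c$, $c\in\mathrm{cyc}$, which determines the component by Lemma~\ref{lemma:cycles aut};
and the multiplicity in $\mathbb{N}\cup\{\aleph_0\}$ of each of the finitely many component types (Corollary~\ref{cor: iso con}).
The resulting finite code determines $\mathcal{A}$ up to isomorphism, so there are only countably many. Since the extra generality the paper's machinery was meant to deliver is not available, this elementary argument is the better proof of what is actually true.
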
  

For monounary algebras, a proof of the theorem above may be achieved via our classification of $\omega$-categorical monounary algebras. 
To show that this extends to all unary algebras we  shall make use of the recent work of Bodor et al. on strong forms of \textit{stability} and \textit{finitely bounded} ultrahomogeneous structures. We do not require a formal definition of (monadic) stability, but refer the reader to \cite{Podewski}.

   The following result is given in far greater generality in \cite{Podewski} (see the remark after Corollary 10):


\begin{corollary}\label{cor:unary ms} Every unary algebra  is monadically stable. 
\end{corollary}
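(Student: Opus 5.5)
This corollary is quoted from \cite{Podewski}. The plan is to reduce it to the structural criterion there, which (as I recall it; the precise form should be checked against the paper) says that a structure is monadically stable if and only if it does not interpret a bipartite graph with arbitrarily long half-graphs after naming arbitrarily many unary predicates. More concretely, I would use the sufficient condition stated after Corollary 10 of \cite{Podewski}: a structure is monadically stable whenever its signature consists only of unary functions and unary relations. Strictly speaking, the statement follows by taking a unary algebra $\mathcal{A}=(A;\theta_1,\dots,\theta_m)$ and observing that it is such a structure, with an empty relational part. The remaining work is to explain why the theorem behind this sufficient condition applies.

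For that explanation I would follow the route Podewski--Ziegler take via trees and superflatness. First, associate to $\mathcal{A}$ its relational form, the digraph whose edges are $(x,\theta_i(x))$ labelled by $i$. Every vertex has bounded out-degree $m$, and this digraph is interdefinable with $\mathcal{A}$. Second, note that monadic stability is preserved under interdefinability and under adding constants, since both only alter the definable sets by parameters, and monadic stability already quantifies over all unary expansions.

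Third, one must show that a structure with only unary functions is superflat, i.e.\ that its underlying undirected Gaifman graph omits, for each $r$, some finite $r$-subdivided clique as a subgraph. Here the intended argument fails: bounded out-degree does not make the Gaifman graph nowhere dense, since in-degrees are unbounded. For instance, with two functions one can make a vertex-set of size $n$ together with midpoints $u_{ab}$ satisfying $\theta_1(u_{ab})=a$ and $\theta_2(u_{ab})=b$; this contains a $1$-subdivided $K_n$. So the superflatness step has to be replaced.

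The more robust argument is the one I expect \cite{Podewski} to intend: in a structure with only unary function symbols, every formula is equivalent, after naming the closure under the functions of finitely many parameters, to a Boolean combination of formulas in one free variable together with equations $t(x)=s(y)$ between terms. The key point is that atomic formulas in two variables have the form $t(x)=s(y)$, which define ``functional'' relations; these cannot encode the order property uniformly under further unary colourings. Making this precise, via a quantifier-reduction argument for unary languages (in the spirit of the theory of mono-unary structures having Morley-rank-like decompositions), is the main obstacle. I would not reprove it, but cite the remark after Corollary~10 of \cite{Podewski}, which asserts exactly that structures in unary languages are monadically stable, and conclude.
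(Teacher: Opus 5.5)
\textbf{There is a genuine gap: the statement is false for unary algebras with two or more operations, and your argument only works for one operation.}

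Your last step is to cite the remark after Corollary~10 of \cite{Podewski}. That is all the paper does too; it gives no argument beyond the citation. The problem is what that citation can cover. The superflatness route you abandoned is the right one, but only for a single operation. In the Gaifman graph of $(A;\theta)$, charge each edge $\{x,\theta(x)\}$ to $x$. This shows every finite subgraph has at most as many edges as vertices. Every subdivision of $K_4$ has two more edges than vertices, so none occurs as a subgraph and the graph is superflat. Naming unary predicates does not change the Gaifman graph. So the Podewski--Ziegler theorem gives monadic stability of every \emph{monounary} algebra.

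You correctly noticed that superflatness fails with two operations, but that failure is not an artefact of the method. Your own midpoint gadget, with the clique replaced by a half-graph, gives a counterexample. Let $A=\{a_i,b_i : i\in\mathbb{N}\}\cup\{e_{ij} : i<j\}$. Let $\theta_1,\theta_2$ fix every $a_i$ and $b_i$, and put $\theta_1(e_{ij})=a_i$ and $\theta_2(e_{ij})=b_j$. Then $\varphi(x,y):=\exists z\,(\theta_1(z)=x\wedge\theta_2(z)=y)$ satisfies $\varphi(a_i,b_j)\iff i<j$. So $(A;\theta_1,\theta_2)$ has the order property and is not even stable.

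This also refutes your quantifier reduction. An equation $t(x)=s(y)$ cannot have the order property, formulas in one variable trivially cannot, and stability is closed under Boolean combinations. Hence $\varphi$ is not equivalent, even over parameters, to a Boolean combination of such formulas. For the same reason, no remark in \cite{Podewski} can assert monadic stability for arbitrary signatures of unary functions.

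What your proposal, and the paper's citation, actually establish is the corollary for monounary algebras. For unary algebras with at least two operations the statement must be restricted. This also affects the paper's use of it in the proof of Theorem~\ref{thm:ctbl unary} beyond the monounary case.
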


 A relational structure $\mathcal{M}$ is finitely bounded if and only if age$(\mathcal{M})$ has a \textit{finite universal axiomatisation}, i.e., there exists a universal first-order sentence $\phi$ such that $\mathcal{A}\in$ age$(\mathcal{M})$ if and only if $\mathcal{A}\models \phi$.

\begin{theorem}[\cite{Bertalan}]\label{thm:ms fbh} 
Let $\mathcal{M}$ be an $\omega$-categorical monadically stable relational structure.
 Then $\mathcal{M}$ is interdefinable with a finitely bounded ultrahomogeneous structure. 
\end{theorem}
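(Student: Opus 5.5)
The plan is to reduce, via Lachlan's structure theory, to a concrete tree-like presentation of $\mathcal{M}$, and then to obtain finite boundedness from a well-quasi-ordering argument. First, by the Ryll-Nardzewski Theorem and monadic stability, we invoke Lachlan's analysis of $\omega$-categorical tree-decomposable structures, refined by the Podewski--Ziegler characterisation of monadic stability. This gives that $\mathcal{M}$ is interdefinable with a structure $\mathcal{T}$ of the following kind. There is a finite chain of $\emptyset$-definable equivalence relations $E_1\supseteq E_2\supseteq\dots\supseteq E_h$ (so the classes form a rooted tree of finite height $h$). There are finitely many unary predicates (colours). There are finitely many further definable relations, each of which is determined ``locally'' by the colours of the classes, in the sense that the isomorphism type of a tuple depends only on its meet-tree together with colours. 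Concretely, I would define $\mathcal{T}$ as the expansion of $\mathcal{M}$ by these $E_i$, by the colour predicates, and by the finitely many $\emptyset$-definable relations of arity bounded by some $r$ that encode the $1$-types over each class. By Lemma \ref{lemma:reduct cat}, $\mathcal{T}$ and $\mathcal{M}$ are then interdefinable.

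Second, I would show that $\mathcal{T}$ is ultrahomogeneous. Given an isomorphism between finite substructures of $\mathcal{T}$, I would extend it to an automorphism level by level down the tree of $E_i$-classes, mirroring Proposition \ref{prop:iso cut} and Lemma \ref{lemma:inductive built partials}. Within a fixed class, the isomorphism type of the classes below it is determined by the colour data recorded in $\mathcal{T}$. Finitely many colours and $\omega$-categoricity guarantee that each relevant isomorphism type of subclass occurs either finitely often, with the number recorded by a colour, or infinitely often. This permits a back-and-forth at each level, and the finite height ensures that the induction terminates.

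Third, for finite boundedness I would argue as follows. Since $\mathcal{T}$ is ultrahomogeneous in a finite relational language, age$(\mathcal{T})$ is axiomatised by forbidding the minimal finite structures in the signature that do not embed into $\mathcal{T}$, and it suffices to show there are only finitely many of these. Every finite structure in the signature satisfying the (universally axiomatisable) nested-equivalence axioms can be coded as a finite rooted tree of height $\le h$ with labels from a finite set. By Kruskal's (or Higman's) theorem, such labelled trees are well-quasi-ordered under label-preserving embedding, and that embedding corresponds to embedding of structures. The forbidden minimal structures form an antichain, so there are finitely many of them. Note, however, that the structures that are \emph{not} tree-coded are exactly those violating one of finitely many bounded axioms: transitivity, nesting, and colour consistency.

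The main obstacle is the first step, namely extracting from monadic stability and $\omega$-categoricity a decomposition in which \emph{all} the definable structure is captured by finitely many relations of bounded arity that are local to the tree. My first attempt would be to follow Lachlan's proof that $\omega$-categorical $\omega$-stable structures without the finite cover property are tree-decomposable. I would then use monadic stability (no coding of pairing functions via unary predicates) to rule out non-local interactions between incomparable branches, for example by showing that distinct branches below a common node are indiscernible over their meet.
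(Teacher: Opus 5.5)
The paper does not prove this statement; it is quoted from \cite{Bertalan}, so your sketch has to stand on its own, and it does not. Step~1 carries essentially the whole content of the theorem, and you only describe how you would attempt it. Worse, the normal form you aim for is false.

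Let $M=\mathbb{N}\times\{0,1\}$, with $E$ meaning ``same first coordinate'' and $Q$ meaning ``same second coordinate''.
\begin{itemize}
\item[$(\mathrm{a})$] $\Aut(M;E,Q)=\mathrm{Sym}(\mathbb{N})\times\mathrm{Sym}(\{0,1\})$ acting coordinatewise, which is oligomorphic.
\item[$(\mathrm{b})$] $(M;E,Q)$ is monadically stable. It is a reduct of $(M;E,P_0,P_1)$ with $P_s=\mathbb{N}\times\{s\}$, whose Gaifman graph is a perfect matching and is unchanged by adding unary predicates, so Podewski--Ziegler applies.
\item[$(\mathrm{c})$] The automorphism group is transitive, so there are no non-trivial $\emptyset$-definable colours. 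The only $\emptyset$-definable equivalence relations are $=$, $E$, $Q$ and $M^2$.
\end{itemize}
For any chain built from these, there are pairs with the same meet-tree and the same colour data that are distinguished by $Q$ (if the chain passes through $E$) or by $E$ (otherwise). For example, for the chain through $E$ compare $((0,0),(1,0))$ with $((0,0),(1,1))$. So the type of a tuple is not determined by its meet-tree and colours. $\emptyset$-definable relations can couple incomparable branches through a finite cover, and monadic stability does not exclude this. Your suggested tool, indiscernibility of branches over their meet, does not help: the $E$-classes are indiscernible as classes, yet $Q$ still ties their elements together.

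This also breaks your other two steps.

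\emph{Step~3.} Since your $\mathcal{T}$ expands $\mathcal{M}$, it contains such coupling relations. A finite structure in its signature is then not a bounded-height tree with finitely many labels, and embeddings are not label-preserving tree embeddings. So the well-quasi-order argument is unavailable as stated.

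\emph{Step~2.} The level-by-level extension, modelled on Proposition~\ref{prop:iso cut}, needs the images of distinct branches to be chosen independently. That is exactly what fails here: choosing the image of one element fixes the swap of the second coordinate on every branch. For this toy example the conclusion is immediate, since $(M;E,Q)$ is itself homogeneous and finitely bounded. The point is that a general proof must build such twisted couplings into both the decomposition and the combinatorics, which is the substance of \cite{Bertalan}.

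A smaller point: even in the untwisted case, Kruskal's theorem uses the wrong order. Homeomorphic tree embeddings may shift levels, so they do not correspond to embeddings preserving each $E_i$. What you need is that level-preserving embeddings of labelled trees of bounded height form a well-quasi-order, which follows by iterating Higman's lemma for finite multisets.
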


 Since monadic stability is preserved under interdefinablilty (see, e.g., \cite[Theorem 9.4.1]{Hodges97}), we obtain: 
 
\begin{corollary}\label{cor:unary fbh} Let $\mathcal{M}$ be an $\omega$-categorical unary algebra. Then $\mathcal{M}$ and $\mathcal{M}^*$ are interdefinable with a finitely bounded  ultrahomogeneous relational structure. 
\end{corollary}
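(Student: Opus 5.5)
The plan is a chain of reductions: first show that $\mathcal{M}^*$ is an $\omega$-categorical, monadically stable relational structure, then apply Theorem~\ref{thm:ms fbh} to it, and finally transfer interdefinability back to $\mathcal{M}$.

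Let $\mathcal{M}=(M;f_1,\dots,f_k)$ be an $\omega$-categorical unary algebra. Its relational form is $\mathcal{M}^*=(M;R_{f_1},\dots,R_{f_k})$, where $R_{f_i}$ is the graph of $f_i$, in direct analogy with the monounary case.

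First, $\mathcal{M}$ and $\mathcal{M}^*$ are interdefinable. Each $R_{f_i}$ is defined in $\mathcal{M}$ by the atomic formula $f_i(x)=y$, so $R_{f_i}$ is definable there. Conversely, each $f_i$ is definable in $\mathcal{M}^*$ because, by the paper's convention, a function is definable when its graph is, and that graph is the relation $R_{f_i}$ itself. Hence $\Aut(\mathcal{M})=\Aut(\mathcal{M}^*)$. Since $\mathcal{M}$ is countable, the RNT shows that $\mathcal{M}^*$ is $\omega$-categorical; equivalently, this follows from Lemma~\ref{lemma:reduct cat}, as $\mathcal{M}^*$ is a first-order reduct of $\mathcal{M}$.

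Second, $\mathcal{M}^*$ is monadically stable. Corollary~\ref{cor:unary ms} gives monadic stability of $\mathcal{M}$, and monadic stability is preserved under interdefinability (\cite[Theorem 9.4.1]{Hodges97}). The one point to check is that passing between function symbols and their graph relations is a legitimate interdefinability in the sense used for that preservation result, i.e.\ that formulas translate in both directions. They do: an atomic formula involving nested terms such as $f_i(f_j(x))=y$ is rewritten with existential quantifiers as $\exists z\,(R_{f_j}(x,z)\wedge R_{f_i}(z,y))$, so every formula of $\mathcal{M}$ has an equivalent formula of $\mathcal{M}^*$ with the same free variables, and conversely.

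Third, Theorem~\ref{thm:ms fbh} applies to the $\omega$-categorical, monadically stable relational structure $\mathcal{M}^*$. It yields a finitely bounded ultrahomogeneous relational structure $\mathcal{N}$ that is interdefinable with $\mathcal{M}^*$. Interdefinability is transitive, since a composite of first-order reducts is again a first-order reduct, so $\mathcal{N}$ is also interdefinable with $\mathcal{M}$. This proves the claim for both $\mathcal{M}$ and $\mathcal{M}^*$.

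I expect the only step needing care to be the preservation of monadic stability across the function-to-relation translation. This is routine, via the term-unnesting translation just described; the rest is direct citation.
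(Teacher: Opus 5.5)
Your proposal is correct and follows the paper's own route: $\mathcal{M}$ and $\mathcal{M}^*$ are interdefinable, so $\mathcal{M}^*$ is $\omega$-categorical and, by Corollary~\ref{cor:unary ms} together with preservation of monadic stability under interdefinability, monadically stable. Theorem~\ref{thm:ms fbh} then applies to $\mathcal{M}^*$. Your remarks on unnesting terms and on transitivity of interdefinability just make explicit what the paper leaves implicit.
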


The following result is essentially folklore, although we give a short proof for completeness.  
 
 \begin{proposition}  \label{prop:ctblfb}For each finite relational signature $\tau$ there exists only countably many countable  $\tau$-structures which are first-order reducts of finitely bounded ultrahomogeneous structures. 
 \end{proposition}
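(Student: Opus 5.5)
The plan is a counting argument. There are only countably many candidate finitely bounded ultrahomogeneous structures, each has a unique countable model, and each has only countably many first-order reducts up to the relevant equivalence.

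Fix a finite relational signature $\tau$ and suppose a countable $\tau$-structure $\mathcal{N}$ is a first-order reduct of a finitely bounded ultrahomogeneous structure $\mathcal{M}$ in some signature $\sigma$. First I will reduce to the case where $\sigma$ is finite and $\mathcal{M}$ is countable. Finite boundedness gives a single universal sentence $\phi$ axiomatising age$(\mathcal{M})$. Only finitely many symbols of $\sigma$ occur in $\phi$, and the symbols not occurring in $\phi$ are unconstrained. So I would take $\sigma$ finite; this is in any case the standard convention for ``finitely bounded'', and I will adopt it explicitly. Since $\mathcal{N}$ is a reduct of $\mathcal{M}$, the two share a domain, so $\mathcal{M}$ is countable.

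Next, because the age of $\mathcal{M}$ is a Fra\"iss\'e class, Fra\"iss\'e's Theorem shows that $\mathcal{M}$ is determined up to isomorphism by age$(\mathcal{M})$. The age is in turn determined by the pair $(\sigma,\phi)$, where $\sigma$ is a finite relational signature and $\phi$ is a universal $\sigma$-sentence. Up to renaming symbols there are only countably many such pairs, because formulas are finite strings over a countable alphabet. Hence there are only countably many possible $\mathcal{M}$ up to isomorphism.

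Finally, fix one such $\mathcal{M}$. Each relation of $\mathcal{N}$ (finitely many, since $\tau$ is finite) is given by a first-order $\sigma$-formula over $\mathcal{M}$. There are only countably many finite tuples of $\sigma$-formulas, so only countably many $\tau$-structures $\mathcal{N}$ arise as first-order reducts of this $\mathcal{M}$. Isomorphic copies of $\mathcal{M}$ yield isomorphic reducts for the same tuple of defining formulas. Taking a countable union of countable sets then gives the result.

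I expect the only delicate point to be the reduction to a finite signature $\sigma$ in the first step, that is, the convention implicit in the notion of finite boundedness. Theorem \ref{thm:ms fbh} produces structures in finite signatures in practice, and I would rely on that convention rather than attempt to handle infinite signatures. If needed, an ultrahomogeneous structure whose age is defined by one universal sentence can only use the symbols occurring in that sentence nontrivially, since any other symbol could be interpreted arbitrarily in a member of the age, and this would break closure under the amalgamation property combined with ultrahomogeneity only in degenerate ways. I would not pursue this further.
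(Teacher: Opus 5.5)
Your argument is correct and has the same skeleton as the paper's: there are countably many possible $\mathcal{M}$, each pinned down by finite data, and each has few $\tau$-reducts. You justify the second step differently. The paper uses that a finitely bounded ultrahomogeneous structure is $\omega$-categorical, and invokes the RNT to get only finitely many definable relations of each arity, hence only \emph{finitely} many $\tau$-reducts of each $\mathcal{M}$. You only observe that there are countably many $\sigma$-formulas, hence countably many tuples of defining formulas.

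Your route is more elementary: it needs neither $\omega$-categoricity nor the RNT, and it would even survive allowing parameters in definitions. The paper's route gives a sharper count than the statement requires. Your write-up is also more explicit than the paper's on two points it glosses:
\begin{itemize}
\item you keep the signature $\sigma$ of $\mathcal{M}$ separate from $\tau$, whereas the paper speaks of ``finitely bounded $\tau$-structures'';
\item you cite the uniqueness part of Fra\"iss\'e's Theorem to explain why the universal axiomatisation of the age determines the countable $\mathcal{M}$ up to isomorphism.
\end{itemize}

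Adopting the finite-signature convention is fine, since the paper assumes it silently too. Your closing justification, however, is off. A symbol absent from $\phi$ can certainly be used nontrivially; for example, add an unconstrained binary relation to the random graph. Amalgamation is not the issue here. What is true, and suffices, is that only finitely many symbols of positive arity can be absent from $\phi$. Otherwise, for $a\in M$, reinterpreting those symbols freely on the one-element substructure $\{a\}$ preserves $\phi$. This gives $2^{\aleph_0}$ pairwise non-isomorphic one-element members of age$(\mathcal{M})$, which is impossible for countable $\mathcal{M}$.
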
 
 
 \begin{proof}
   Each finitely bounded $\tau$-structure is given by finite data (the finite universal axiomitisation of its age), and so there are only countably many. 
 Since a finitely bounded  ultrahomogeneous $\tau$-structure $\mathcal{M}$ is $\omega$-categorical, it has only finitely many definable relations/functions of each arity by the RNT (see, e.g. \cite[Theorem 6.3.1]{Hodges97}. 
 Hence there are only finitely many $\tau$-structures which are first-order reducts of $\mathcal{M}$.
 \end{proof}
\begin{proof}[Proof of Theorem \ref{thm:ctbl unary}] 
By Corollary \ref{cor:unary fbh}, an $\omega$-categorical unary algebra is interdefinable with a finitely bounded ultrahomogeneous relational structure. Hence, by Proposition \ref{prop:ctblfb}, the class of all unary algebras of arity $n$ is countable, up to isomorphism. 
\end{proof}

\section{Towards unary algebras}

Given the success in determining the ultrahomogeneity and  $\omega$-categoricity of monounary algebras, it seems reasonable to attempt  to extend to unary algebras.

One obvious first step is to ask to what extend does the $\omega$-categoricity of the monounary reducts of a unary algebra $\mathcal{A}$ dictate the $\omega$-categoricity   of $\mathcal{A}$? 
If $\mathcal{A}=(A;\theta_1,\dots,\theta_n)$ is $\omega$-categorical then it follows from the RNT that  each $(A;\theta_i)$ is $\omega$-categorical, since $\Aut(\mathcal{A})\subseteq \Aut(A;\theta_i)$.
  Unfortunately,  the example below shows that the converse needs not hold, even  for \textit{commutative unary algebras}, i.e.,  those in which $\theta_i\theta_j=\theta_j\theta_i$ for each $i,j\leq n$. 

\begin{example} Let $A_0=\{c,a_i \colon i \in \mathbb{N}\}$ and $A=A_0\cup B$ for some countably infinite set $B$ disjoint to $A_0$. We define a pair of unary functions $\theta_1$ and $\theta_2$ on $A$ such that each $(A;\theta_i)$ is isomorphic to the $\omega$-categorical monounary algebra $\mathcal{A}[1;\omega,\omega]$ as follows. 
Let $\{X_i \colon i\in \mathbb{N}\}$ and  $\{Y_i \colon i\in \mathbb{N}\}$ be a pair of partitions of $B$ such that $|X_i|=|Y_i|=\aleph_0$ and $|X_i \cap Y_i|=i$ for each $i\in \mathbb{N}$. 
 Let $\theta_k(A_0)=c$ $(k=1,2)$ and $\theta_1(X_i)=\theta_2(Y_i) = a_i$  for each $i\in \mathbb{N}$.
 Let $\mathcal{A}=(A;\theta_1,\theta_2)$, noting that $\mathcal{A}$ is commutative as $$\theta_1\theta_2(A)=\theta_1(A_0) = c = \theta_2(A_0)=\theta_2\theta_1(A).$$
 We claim that $\{a_i:i\in \mathbb{N}\}$ is an infinite set of distinct 1-orbits.
    Indeed, suppose that $\phi\in \Aut(\mathcal{A})$ maps $a_i$ to $a_j$.
   Then as $\phi$ preserves $\theta_1$ we have 
   $$\phi(X_i)=\phi(\theta_1^{-1}(a_i))=\theta_1^{-1}(\phi(a_i))=\theta_1^{-1}(a_j)=X_j,$$  and similarly as $\phi$ preserves $\theta_2$ we have $\phi(Y_i)=Y_j$. Hence as $\phi$ is a bijection we have $\phi(X_i\cap Y_i)= X_j\cap Y_j$, and so their cardinalities force $i=j$. 
\end{example} 

It is an open problem  whether Theorem \ref{thm:cat iff 1-cat} can be generalised to unary algebras, in particular we ask: 

\begin{problem} Is every locally  finite unary algebra with finitely many 1-orbits necessarily  $\omega$-categorical? 
\end{problem} 

Similar problems occur when studying the ultrahomogeneity of unary algebras.
 Indeed the  2-unary algebra $\mathcal{A}$ in the example above  is not ultrahomogeneous since $\langle a_i \rangle = \{a_i,c\} \cong \{a_j,c\}=\langle a_i \rangle$ for each $i,j\in \mathbb{N}$, despite each monounary reduct being ultrahomogeneous. 
Conversely, and unlike for $\omega$-categoricity,  the monounary algebra reducts need not inherit ultrahomogeneity: 

\begin{example} Let $A=\{x,y,z\}$ and $\theta_1$ and $\theta_2$ be unary maps defined by $\theta_1(x)=\theta_1(y)=z$, $\theta_1(z)=y$, $\theta_2(z)=\theta_2(y)=x$, and $\theta_2(x)=y$.
 Then $(A;\theta_1)\cong (A;\theta_2)$ are a 2-cycle with a single acyclic element, and are thus not ultrahomogeneous by Theorem \ref{thm:hom con class}.
However $(A;\theta_1,\theta_2)$ has no proper subalgebras, and  is thus trivially ultrahomogeneous. 
\end{example} 

\begin{problem} Is every 1-ultrahomogeneous unary algebra necessarily ultrahomogeneous? 
\end{problem}
  
\section*{Acknowledgements} 

Part of this work was funded by the German Science Foundation (DFG, project number 622397)
and by the European Research Council (Grant Agreement no.  681988, CSP-Infinity). 
  
%

\end{document}